\DeclareRobustCommand{\SkipTocEntry}[5]{}
\setlist[enumerate,1]{label={\upshape(\arabic*)}}
\setlist[enumerate,2]{label={\upshape(\alph*)}}
\tikzset{blackv/.style={circle,fill=black,inner sep=3pt,outer sep=3pt},
         whitev/.style={circle,fill=white,draw=black,inner sep=3pt,outer sep=3pt},
         blabel/.style={
           circle, fill=white, draw=black, font=\scriptsize,
           inner sep=0.5pt, outer sep=0pt},
         redv/.style={circle,fill=red,inner sep=3pt,outer sep=3pt},
         block/.style={draw,rectangle split,rectangle split horizontal,rectangle split parts=#1},
         symbol/.style={
           draw=none,
           every to/.append style={
             edge node={node [sloped, allow upside down, auto=false]{$#1$}}}}
}
\newcolumntype{C}{>{$}c<{$}}
\newtheorem{theorem}{Theorem}[section]
\newtheorem{theoremi}{Theorem}
\newtheorem{corollaryi}[theoremi]{Corollary}
\newtheorem{propositioni}[theoremi]{Proposition}
\newtheorem{corollary}[theorem]{Corollary}
\newtheorem{lemma}[theorem]{Lemma}
\newtheorem*{lemma*}{Lemma}
\newtheorem*{theorem*}{Theorem}
\newtheorem{proposition}[theorem]{Proposition}
\newtheorem{definition-proposition}[theorem]{Definition-Proposition}
\newtheorem{question}[theorem]{Question}
\newtheorem{conjecture}[theorem]{Conjecture}
\theoremstyle{definition}
\newtheorem{definition}[theorem]{Definition}
\newtheorem{remark}[theorem]{Remark}
\newtheorem{example}[theorem]{Example}
\newcommand{\2}{\mathbf{2}}
\newcommand{\CC}{\mathcal{C}}
\newcommand{\EE}{\mathcal{E}}
\newcommand{\II}{\mathcal{I}}
\newcommand{\PP}{\mathcal{P}}
\newcommand{\XX}{\mathcal{X}}
\newcommand{\YY}{\mathcal{Y}}
\newcommand{\Ext}{\operatorname{Ext}\nolimits}
\newcommand{\Hom}{\operatorname{Hom}\nolimits}
\newcommand{\End}{\operatorname{End}\nolimits}
\newcommand{\op}{\operatorname{op}\nolimits}
\newcommand{\rad}{\operatorname{rad}\nolimits}
\newcommand{\Image}{\operatorname{Im}\nolimits}
\newcommand{\Kernel}{\operatorname{Ker}\nolimits}
\newcommand{\im}{\Image}
\renewcommand{\ker}{\Kernel}
\DeclareMathOperator{\moduleCategory}{\mathsf{mod}} \renewcommand{\mod}{\moduleCategory}
\DeclareMathOperator{\proj}{\mathsf{proj}}
\DeclareMathOperator{\inj}{\mathsf{inj}}
\DeclareMathOperator{\ind}{\mathsf{ind}}
\DeclareMathOperator{\GP}{\mathsf{GP}}
\DeclareMathOperator{\wtilt}{\mathsf{W-tilt}}
\DeclareMathOperator{\tilt}{\mathsf{tilt}}
\DeclareMathOperator{\Fac}{\mathsf{Fac}}
\DeclareMathOperator{\add}{\mathsf{add}}
\DeclareMathOperator{\pd}{\mathsf{pd}}
\DeclareMathOperator{\id}{\mathsf{id}}
\newcommand{\iso}{\cong}
\newcommand{\defl}{\twoheadrightarrow}
\newcommand{\equi}{\simeq}
\newcommand{\sst}[1]{\substack{#1}}
\numberwithin{equation}{section}
\begin{document}
\title{Maximal self-orthogonal modules and a new generalization of tilting modules}

\author[H. Enomoto]{Haruhisa Enomoto}
\address{Graduate School of Science, Osaka Metropolitan University, 1-1 Gakuen-cho, Naka-ku, Sakai, Osaka 599-8531, Japan}
\email{henomoto@omu.ac.jp}

\subjclass[2020]{16G10, 16E30}
\keywords{self-orthogonal modules, Wakamatsu tilting modules, projectively Wakamatsu tilting modules}
\begin{abstract}
  We introduce a generalization of tilting modules of finite projective dimension, projectively Wakamatsu tilting modules, which are self-orthogonal and Ext-progenerators in their Ext-perpendicular categories. Under a certain finiteness condition, we prove that the following modules coincide: projectively Wakamatsu tilting, Wakamatsu tilting, maximal self-orthogonal, and self-orthogonal modules with the same rank as the algebra. This provides another proof of the weak Gorensteinness of representation-finite algebras.
  To prove this, we introduce Bongartz completion of self-orthogonal modules and characterize its existence.
  Moreover, we study a binary relation on Wakamatsu tilting modules which extends the poset of tilting modules, and use it to prove that every self-orthogonal module over a representation-finite Iwanaga-Gorenstein algebra has finite projective dimension.
  Finally, we discuss several conjectures related to self-orthogonal modules and their connections to famous homological conjectures.
\end{abstract}

\maketitle

\tableofcontents

\section{Introduction}
In the representation theory of artin algebras, \emph{self-orthogonal modules} have been widely studied. A $\Lambda$-module $T$ is self-orthogonal if $\Ext^i_\Lambda(T, T) = 0$ for all $i > 0$. For example, the famous \emph{Auslander-Reiten conjecture} states that a self-orthogonal generator must be projective. One of the most extensively studied classes of self-orthogonal modules is \emph{tilting modules} of finite projective dimension introduced by Miyashita \cite{miyashita}. Tilting modules induce a derived equivalence and can be used to classify a particular class of coresolving subcategories of $\mod\Lambda$ \cite{applications}, forming the basis of what is known as \emph{tilting theory}.

For a self-orthogonal module $T$, consider the subcategory $T^\perp$ of $\mod\Lambda$ consisting of $X$ such that $\Ext_\Lambda^i(T, X) = 0$ for all $i > 0$. If $T$ is a tilting module, then $T$ becomes an $\Ext$-progenerator of $T^\perp$ \cite{applications}.
Taking this into account, we introduce a new generalization of a tilting module, called a \emph{projectively Wakamatsu tilting module}:
\begin{definition}
  Let $T$ be a self-orthogonal $\Lambda$-module. We call $T$ \emph{projectively Wakamatsu tilting} if $T$ is an $\Ext$-progenerator of $T^\perp$.
\end{definition}
Then a tilting module is precisely a projectively Wakamatsu tilting module of finite projective dimension (Proposition \ref{prop:tilt-rels}).
This paper aims to study projectively Wakamatsu tilting modules, their relationship to Wakamatsu tilting modules, and to apply these results to the study of self-orthogonal modules.

The name \emph{projectively Wakamatsu tilting} is derived from the fact that this class is a subclass of \emph{Wakamatsu tilting modules} \cite{wakamatsu}. We recall their definition briefly.
Auslander-Reiten \cite{applications} introduced the category $\YY_T \subseteq T^\perp$ for any self-orthogonal module $T$, such that $T$ is an $\Ext$-progenerator of $\YY_T$ (Definition \ref{def:subcat}). Then a module $T$ is called \emph{Wakamatsu tilting} if $D \Lambda \in \YY_T$. Wakamatsu tilting modules are of particular interest when considering exact categories, since the result of \cite{eno-wak} implies that if an exact category $\EE$ has a progenerator $P$ and an injective cogenerator $I$, then $I$ is a Wakamatsu tilting module under the canonical embedding $\EE(P,-) \colon \EE \hookrightarrow \mod \End_\EE(P)$.

We prove that a projectively Wakamatsu tilting module is precisely a Wakamatsu tilting module $T$ for which $\YY_T = T^\perp$ holds (Proposition \ref{prop:wak-proj-char}).
One of the drawbacks of Wakamatsu tilting modules is that the category $\YY_T$ is unclear compared to $T^\perp$. Thus, projectively Wakamatsu tilting modules can be seen as a suitable subclass of Wakamatsu tilting modules:
\[
  \{ \text{tilting} \}
  \quad \subseteq \quad
  \{ \text{projectively Wakamatsu tilting}  \}
  \quad \subseteq \quad
  \{ \text{Wakamatsu tilting} \}
\]

The main result of this study, which focuses on representation-finite algebras, is as follows:
\begin{theoremi}[= Theorem \ref{thm:main2}]\label{thm:A}
  Let $\Lambda$ be an artin algebra and $T \in \mod\Lambda$. Suppose that $T^\perp$ has only finitely many indecomposables (e.g. $\Lambda$ is representation-finite). Then the following conditions are equivalent:
  \begin{enumerate}
    \item $T$ is a projectively Wakamatsu tilting module.
    \item $T$ is a Wakamatsu tilting module.
    \item $T$ is a maximal self-orthogonal module, that is, $T$ is self-orthogonal, and if $T \oplus M$ is self-orthogonal, then $M \in \add T$ holds.
    \item $T$ is a self-orthogonal module with $|T| = |\Lambda|$.
  \end{enumerate}
\end{theoremi}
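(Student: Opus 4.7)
The plan is to close the cycle of implications $(1) \Rightarrow (2) \Rightarrow (3) \Rightarrow (4) \Rightarrow (1)$. The first two implications are general and do not invoke the finiteness hypothesis on $T^\perp$, whereas the last two will rest on the Bongartz completion for self-orthogonal modules developed earlier in the paper.

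First, $(1) \Rightarrow (2)$ is immediate from Proposition \ref{prop:wak-proj-char}, which characterizes projectively Wakamatsu tilting modules as those Wakamatsu tilting modules $T$ for which $\YY_T = T^\perp$. For $(2) \Rightarrow (3)$ I would invoke the classical fact, going back to \cite{wakamatsu}, that every Wakamatsu tilting module is maximal self-orthogonal: this is proved by combining the coresolution of $D\Lambda$ by summands of $T$ built into the definition of Wakamatsu tilting with a standard dimension-shift argument along the coresolution.

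For the two remaining implications, the key input is the Bongartz completion: under the finiteness hypothesis on $T^\perp$, any self-orthogonal $T$ admits a module $U$ such that $\ov T := T \oplus U$ is projectively Wakamatsu tilting. Combining the implication $(1) \Rightarrow (2)$ already in hand with the classical rank equality $|T'| = |\Lambda|$ for Wakamatsu tilting modules $T'$ yields $|\ov T| = |\Lambda|$. Now if $T$ is maximal self-orthogonal as in $(3)$, then since $T \oplus U$ is self-orthogonal we must have $U \in \add T$, so $|T| = |\ov T| = |\Lambda|$, giving $(4)$. Conversely, if $|T| = |\Lambda|$ as in $(4)$, then since $T$ is a summand of $\ov T$ and $|\ov T| = |\Lambda| = |T|$, we again get $U \in \add T$, so $T$ itself is projectively Wakamatsu tilting, giving $(1)$.

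The main obstacle, then, is not the present theorem but the construction of the Bongartz completion of self-orthogonal modules and the verification that its existence is guaranteed whenever $T^\perp$ has only finitely many indecomposables; once those tools (together with the rank equality for Wakamatsu tilting modules) are in place, the theorem above reduces to the short bookkeeping argument and rank count sketched in the previous paragraph.
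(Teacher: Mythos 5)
There is a genuine gap, and it is not a small one. You claim that the implication $(2) \Rightarrow (3)$ (Wakamatsu tilting $\Rightarrow$ maximal self-orthogonal) is a ``classical fact, going back to \cite{wakamatsu}'' provable in general by a dimension shift along the coresolution of $D\Lambda$. This is incorrect: the paper states explicitly that this is \emph{open in general} and records it as the Maximal self-orthogonal conjecture (Conjecture \ref{conj:msoc}). Similarly, you invoke as classical the rank equality $|T'| = |\Lambda|$ for all Wakamatsu tilting modules $T'$, but this too is open --- it is exactly the Proj=Inj conjecture (see Section \ref{sec:conj}), and Lemma \ref{lem:wak-duality} only yields it for $T'$ itself if one already knows $\#\ind({}^\perp T')$ (on the $\Gamma$-side) is finite. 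The point of Theorem \ref{thm:main2} is precisely that these two conjectural statements \emph{do} hold under the hypothesis that $T^\perp$ has only finitely many indecomposables, so taking them as known facts makes the argument circular.

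The missing ingredient is the almost-split-sequence counting argument of Lemma \ref{lem:exact-cat}: in a functorially finite extension-closed subcategory $\EE$ with finitely many indecomposables, the existence of almost split sequences in $\EE$ gives a bijection between non-$\Ext$-projective and non-$\Ext$-injective indecomposables, hence $\#\ind\PP(\EE) = \#\ind\II(\EE)$. Applying this to $\EE = \YY_T$ (which has $\Ext$-progenerator $T$ and $\Ext$-injective cogenerator $D\Lambda$ when $T$ is Wakamatsu tilting) yields $|T| = |D\Lambda| = |\Lambda|$, which is how the paper gets $(2) \Rightarrow$ ``$|T| = |\Lambda|$.'' The implication to maximal self-orthogonality then follows from Lemma \ref{lem:wak-number} applied to $T \oplus M$. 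Your use of Bongartz completion for the final implication $(4) \Rightarrow (1)$ is in the right spirit and matches the paper, but the two intermediate implications need the finiteness hypothesis and the counting lemma rather than the classical facts you cite.
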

This theorem has several applications. Firstly, this enables us to easily find projectively Wakamatsu tilting = Wakamatsu tilting modules for the representation-finite case, since the conditions (3) and (4) can be easily checked for any given algebra and module, and a computer program can be used to obtain all such modules. This was in fact the original motivation for this paper (see Section \ref{sec:example} for various concrete examples).

Secondly, it provides a unified proof of some homological conjectures for representation-finite algebras. Let $\Lambda$ be a representation-finite artin algebra. Since $\Lambda_\Lambda$ satisfies (4) in Theorem \ref{thm:A}, it satisfies (3), so $\Lambda$ is maximal self-orthogonal, which is precisely the \emph{Auslander-Reiten conjecture}.
Moreover, let $T$ be a Wakamatsu tilting module of finite projective dimension. Then $T$ is projectively Wakamatsu tilting by Theorem \ref{thm:A}, so $T$ is tilting by Proposition \ref{prop:tilt-rels}, which is precisely the \emph{Wakamatsu tilting conjecture}. One can also deduce the Generalized Nakayama conjecture easily (see Proposition \ref{prop:conj-rel}).

Finally, Theorem \ref{thm:A} immediately gives another proof of the following result in Gorenstein homological algebra, which was previously shown in \cite[Corollary 5.11]{be}.
Here $\GP \Lambda$ denotes the category of Gorenstein-projective modules.
\begin{corollary}[= Corllary \ref{cor:weakly-gor}]
  Let $\Lambda$ be an artin algebra such that $^\perp \Lambda$ has only finitely many indecomposables. Then $\Lambda$ is weakly Gorenstein, that is, $\GP \Lambda = {}^\perp \Lambda$ holds.
\end{corollary}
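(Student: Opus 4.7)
The inclusion $\GP \Lambda \subseteq {}^\perp \Lambda$ is automatic, so the task is to show the reverse inclusion. I would use the standard characterization that $X \in \GP \Lambda$ exactly when $X \in {}^\perp \Lambda$ admits an exact coresolution $0 \to X \to P^0 \to P^1 \to P^2 \to \cdots$ with $P^j \in \proj \Lambda$ and all cosyzygies lying in $^\perp \Lambda$ (equivalently, the coresolution remains exact after applying $\Hom_\Lambda(-, \Lambda)$).

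The plan is to transfer the problem to $\Lambda^{\op}$ and invoke Theorem A there. Under the standard duality $D \colon \mod \Lambda \to \mod \Lambda^{\op}$ one has $\Ext_\Lambda^i(X, Y) \iso \Ext_{\Lambda^{\op}}^i(DY, DX)$, and $D$ sends $\proj \Lambda$ bijectively onto $\add D\Lambda = \inj \Lambda^{\op}$. Consequently the hypothesis that $^\perp \Lambda$ has finitely many indecomposables translates to: $(D\Lambda)^\perp \subseteq \mod \Lambda^{\op}$ has finitely many indecomposables. Now set $T := D\Lambda \in \mod \Lambda^{\op}$, the injective cogenerator of $\mod \Lambda^{\op}$. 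Then $T$ is self-orthogonal (injectives kill $\Ext^{>0}$) and $|T| = |\Lambda^{\op}|$, so Theorem A applies to the pair $(\Lambda^{\op}, T)$ and yields that $T$ is projectively Wakamatsu tilting over $\Lambda^{\op}$. In particular $T$ is an $\Ext$-progenerator of $T^\perp$, which (unwinding the definition inside the exact category $T^\perp$) produces for every $Y \in T^\perp$ an exact resolution $\cdots \to T_1 \to T_0 \to Y \to 0$ with $T_j \in \add T = \inj \Lambda^{\op}$ and all syzygies in $T^\perp$.

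Applying $D$ to this resolution, and writing $X := DY$ and $P^j := DT_j \in \proj \Lambda$, produces the required exact coresolution $0 \to X \to P^0 \to P^1 \to \cdots$ with all cosyzygies in $^\perp \Lambda$, placing an arbitrary $X \in {}^\perp \Lambda$ in $\GP \Lambda$. The main obstacle I expect is in the middle step: correctly unpacking the $\Ext$-progenerator property of $T$ in $T^\perp$ to obtain an honest left resolution by $\add T$ in which every stage is an admissible epimorphism of the exact category $T^\perp$, so that applying $D$ actually yields a coresolution whose cosyzygies remain in $^\perp \Lambda$. Once that admissibility is secured, the duality argument is purely mechanical.
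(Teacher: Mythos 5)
Your argument is correct and is, in substance, the paper's own proof: invoking ``the dual of Theorem \ref{thm:main2}'' is precisely your explicit transfer to $\Lambda^{\op}$ via Matlis duality, and the ``unwinding'' of the $\Ext$-progenerator property that you flag as the delicate point is handled by (the dual of) Proposition \ref{prop:wak-proj-char}, which identifies projectively Wakamatsu tilting with $T^\perp = \YY_T$. The only cosmetic difference is that you verify condition (4) of Theorem \ref{thm:main2} for $D\Lambda$ over $\Lambda^{\op}$, whereas the paper verifies condition (2), namely that $\Lambda_\Lambda$ is (trivially) Wakamatsu tilting.
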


The main tool for proving Theorem \ref{thm:A} is \emph{Bongartz completion} of a self-orthogonal module. A Bongartz completion of a self-orthogonal module $U$ is a projectively Wakamatsu tilting module $T$ such that $T^\perp = U^\perp$, which generalizes Bongartz completion of tilting modules.
We prove that $U$ has a Bongartz completion if and only if $U^\perp$ has a finite cover (Theorem \ref{thm:main}). As a consequence, every self-orthogonal module over a representation-finite algebra can be completed into a projectively Wakamatsu tilting module.

We also investigate the following binary relation $\leq$ on the set of Wakamatsu tilting $\Lambda$-modules. Let $\wtilt\Lambda$ be the set of isomorphism classes of basic Wakamatsu tilting $\Lambda$-modules. For $T_1, T_2 \in \wtilt\Lambda$, we define $T_1 \geq T_2$ if $\Ext^{>0}_\Lambda(T_1, T_2) = 0$. This extends and contains the known poset of (co)tilting modules.
We find however, that \emph{$(\wtilt\Lambda, \leq)$ is not a poset in general} even if $\Lambda$ is representation-finite (Example \ref{ex:not-poset}), which is in contrast to the poset of tilting modules.
As for this binary relation, we prove that the set of tilting modules is upward-closed in $\wtilt\Lambda$:
\begin{theoremi}[= Theorem \ref{thm:tilt-up-closed}]
  Let $\Lambda$ be a representation-finite artin algebra, and $T, U \in \wtilt\Lambda$. If $T \geq U$ holds in $\wtilt\Lambda$ and $U$ is tilting, then so is $T$.
\end{theoremi}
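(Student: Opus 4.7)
The plan is as follows. By Theorem~\ref{thm:A} applied to the representation-finite algebra $\Lambda$, the Wakamatsu tilting module $T$ is in fact projectively Wakamatsu tilting, so $\YY_T = T^\perp$; combined with Proposition~\ref{prop:tilt-rels}, it suffices to show that $\pd T < \infty$.

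Since $U$ is tilting, $\Lambda$ admits a finite $\add U$-coresolution $0 \to \Lambda \to U^0 \to \cdots \to U^n \to 0$, and the hypothesis $T \geq U$ together with closure of $T^\perp$ under direct summands gives $\Ext^{>0}_\Lambda(T, U^i) = 0$ for every $i$. Splitting this coresolution into short exact sequences and applying $\Hom_\Lambda(T, -)$, a standard dimension-shifting argument yields $\Ext^{>n}_\Lambda(T, \Lambda) = 0$. Equivalently, the $n$-th syzygy satisfies $\Omega^n T \in {}^\perp \Lambda$, which by Corollary~\ref{cor:weakly-gor} coincides with $\GP\Lambda$ in the representation-finite case, so $T$ has finite Gorenstein-projective dimension.

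The main obstacle is promoting finite Gorenstein-projective dimension to finite projective dimension using the Wakamatsu tilting structure. My plan here is to examine the minimal $\add T$-coresolution $0 \to \Lambda \to T^0 \to T^1 \to \cdots$ guaranteed by the Wakamatsu tilting property, with cokernels $L_k$. A dimension-shift identity $\Ext^i_\Lambda(T, L_k) \cong \Ext^{i+k+1}_\Lambda(T, \Lambda)$ combined with the vanishing above shows $L_k \in T^\perp$ for large $k$, while the $\Hom_\Lambda(-, T)$-exactness of the coresolution places every $L_k$ in ${}^\perp T$. Since $T^\perp$ has only finitely many indecomposables, the sequence $(L_k)$ takes only finitely many isomorphism types. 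The hard step is then to argue, by exploiting the Gorenstein-projective structure of $\Omega^n T$ and this finiteness, that $L_k$ is itself self-orthogonal for some sufficiently large $k$; once so, $T \oplus L_k$ is self-orthogonal, the maximal self-orthogonality of $T$ (Theorem~\ref{thm:A}) forces $L_k \in \add T$, and minimality of the coresolution forces it to terminate at that step. This gives $\pd T < \infty$, whence $T$ is tilting by Proposition~\ref{prop:tilt-rels}.
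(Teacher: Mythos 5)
Your setup is sound and differs genuinely from the paper's: you work with the $\add T$-coresolution of $\Lambda$ inside $\XX_T$ and try to truncate it, whereas the paper forms the exact category $\EE := T^\perp \cap {}^\perp U$ (with $\Ext$-progenerator $T$ and $\Ext$-injective cogenerator $U$), shows $\pd_\EE U < \infty$ by truncating the $T$-\emph{resolution} of $U$, and then invokes Lemma~\ref{lem:GSC} (a Gorenstein-symmetry statement for exact categories with finitely many indecomposables) to obtain a finite $\add U$-coresolution of $T$, whence $\pd T_\Lambda < \infty$. However, your ``hard step'' --- showing that some cokernel $L_k$ is self-orthogonal --- is a genuine gap, and the strategy you gesture at (``exploiting the Gorenstein-projective structure of $\Omega^n T$ and this finiteness'') does not supply it. Note that for $k \geq n$ your $L_k$ all lie in $\XX_T \cap \YY_T$ (using $T^\perp = \YY_T$ and ${}^\perp T = \XX_T$ from Theorem~\ref{thm:main2} and its dual), and this intersection is a Frobenius exact category with $\PP = \II = \add T$. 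In a Frobenius category the cosyzygies $L_k$ of a non-$\Ext$-projective object stay non-$\Ext$-projective forever, and Lemma~\ref{lem:GSC} applied to this Frobenius category is vacuous (its constant $d$ is $0$). So the mere finiteness of iso-types of $L_k$, together with $\Omega^n T \in \GP\Lambda$, gives no leverage toward $\Ext^{>0}_\Lambda(L_k, L_k) = 0$. This is precisely where the paper's choice of $\EE = T^\perp \cap {}^\perp U$ (a non-Frobenius exact category whose progenerator and injective cogenerator genuinely differ) is crucial: there the bound $d = \sup\{\pd_\EE X : \pd_\EE X < \infty\}$ is a nontrivial pigeonhole quantity, and the argument of Lemma~\ref{lem:GSC} actually closes.

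There is also a secondary issue with the final inference. Even if you could force the $\add T$-coresolution $0 \to \Lambda \to T^0 \to \cdots$ to terminate, concluding ``$\pd T_\Lambda < \infty$'' is not automatic: a finite $\add T$-coresolution of $\Lambda$ together with self-orthogonality of $T$ gives two of Miyashita's three tilting conditions, but the third ($\pd T_\Lambda < \infty$) does not follow formally from the other two. The paper avoids this entirely by producing a finite $\add U$-coresolution of $T$ and reading off finiteness of $\pd T_\Lambda$ from $\pd U_\Lambda < \infty$, then passing to tilting via Proposition~\ref{prop:tilt-rels}(3). To repair your approach you would need both a proof of the hard step and a separate argument upgrading the terminating coresolution to finite projective dimension.
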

This theorem has the following consequence on self-orthogonal modules over Iwanaga-Gorenstein algebras, which is of particular interest in itself.
\begin{corollaryi}[= Corollary \ref{cor:IG}]
  Let $\Lambda$ be a representation-finite Iwanaga-Gorenstein algebra. Then the following hold.
  \begin{enumerate}
    \item Every self-orthogonal $\Lambda$-module has finite projective dimension.
    \item Tilting modules, projectively Wakamatsu tilting modules, Wakamatsu tilting modules, and cotilting modules are all equivalent.
  \end{enumerate}
\end{corollaryi}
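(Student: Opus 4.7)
My plan is to derive both parts from three results already established in the paper: the existence of Bongartz completion (Theorem~\ref{thm:main}), Theorem~A, and the upward-closedness of tilting modules inside $\wtilt\Lambda$ (Theorem~\ref{thm:tilt-up-closed}). For part~(1), fix a self-orthogonal $T \in \mod\Lambda$. The key observation is that $D\Lambda$ is itself a \emph{tilting} $\Lambda$-module under the Iwanaga-Gorenstein hypothesis: $\Ext^{>0}(D\Lambda, D\Lambda) = 0$ holds because $D\Lambda$ is injective; $\pd D\Lambda < \infty$ follows from $\id{}_{\Lambda^{\op}}\Lambda < \infty$ via the duality $D$; and a finite injective coresolution of ${}_\Lambda\Lambda$, which exists since $\id{}_\Lambda\Lambda < \infty$, has all terms in $\add D\Lambda$ and supplies the required coresolution axiom. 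Next, applying Theorem~\ref{thm:main}: since $\Lambda$ is representation-finite, $T^\perp$ has a finite cover, so there exists a projectively Wakamatsu tilting $T'$ with $T'^\perp = T^\perp$; as $T$ is self-orthogonal we have $T \in T^\perp = T'^\perp$ and $T$ is Ext-projective in $T'^\perp$, whence $T \in \add T'$. Finally $\Ext^{>0}(T', D\Lambda) = 0$ by injectivity of $D\Lambda$, so $T' \geq D\Lambda$ in $\wtilt\Lambda$; Theorem~\ref{thm:tilt-up-closed} then upgrades $T'$ to a tilting module, giving $\pd T' < \infty$ and hence $\pd T \leq \pd T' < \infty$.

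For part~(2), the equivalence of tilting, projectively Wakamatsu tilting, and Wakamatsu tilting is essentially immediate: Theorem~A identifies the latter two under representation-finiteness; Proposition~\ref{prop:tilt-rels} gives tilting $\Rightarrow$ projectively Wakamatsu tilting; conversely any Wakamatsu tilting $T$ is self-orthogonal, so part~(1) supplies $\pd T < \infty$, and Proposition~\ref{prop:tilt-rels} then promotes $T$ to tilting. For cotilting I would argue by duality: $T$ is cotilting over $\Lambda$ iff $DT$ is tilting over $\Lambda^{\op}$, while Wakamatsu tiltingness is preserved under $D$. Since $\Lambda^{\op}$ is again representation-finite and Iwanaga-Gorenstein, the equivalence just proved applied on the opposite side identifies Wakamatsu tilting with tilting for $\Lambda^{\op}$-modules, so the chain ``$T$ Wakamatsu tilting over $\Lambda$ $\Leftrightarrow$ $DT$ Wakamatsu tilting over $\Lambda^{\op}$ $\Leftrightarrow$ $DT$ tilting over $\Lambda^{\op}$ $\Leftrightarrow$ $T$ cotilting over $\Lambda$'' closes the circle.

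I do not anticipate a serious obstacle, since all heavy lifting is carried by the three cited results; the role of the Iwanaga-Gorenstein hypothesis is precisely to supply the tilting upper bound $D\Lambda$ for Bongartz completions under $\geq$. The main points requiring attention are verifying the tilting axioms for $D\Lambda$ and, in part~(2), tracking the left-right duality used in the cotilting step (noting that $\Lambda^{\op}$ inherits both representation-finiteness and the Iwanaga-Gorenstein condition, so that part~(1) and Theorem~A may legitimately be re-applied on the opposite side).
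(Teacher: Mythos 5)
Your proposal is correct and follows essentially the same route as the paper: Bongartz completion (via Corollary \ref{cor:bon-for-rep-fin}) gives $T \in \add T'$ with $T'$ projectively Wakamatsu tilting, $T' \geq D\Lambda$ trivially, $D\Lambda$ is tilting by the Iwanaga--Gorenstein hypothesis, and Theorem \ref{thm:tilt-up-closed} upgrades $T'$ to tilting, from which both parts follow. The only minor difference is the cotilting step: you pass to $\Lambda^{\op}$ via the Matlis duality $D$ and re-apply the result on the opposite side, whereas the paper instead observes directly that for an Iwanaga--Gorenstein algebra $\id T_\Lambda < \infty$ iff $\pd T_\Lambda < \infty$, so cotilting and tilting already coincide by \cite[Proposition~4.4]{MR} without invoking duality; both routes are valid, yours being slightly heavier but more uniform with the rest of your argument.
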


If $\Lambda$ is not representation-finite, then some conditions in Theorem \ref{thm:A} are not equivalent (see Examples \ref{ex:weakly-gor} and \ref{ex:mso-not-wak}). However, we conjecture that some of them are still related:
\begin{conjecture}\label{conj:intro}
  Every Wakamatsu tilting module $T$ is maximal self-orthogonal with $|T| = |\Lambda|$. Furthermore, every self-orthogonal module $T$ with $|T| = |\Lambda|$ is maximal self-orthogonal and Wakamatsu tilting.
\end{conjecture}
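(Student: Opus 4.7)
The plan is to split the conjecture into its two directions and attempt each using the Bongartz completion machinery developed earlier in the paper, together with the classical counting theory of Wakamatsu tilting modules.

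For the first direction, assume $T$ is Wakamatsu tilting. The rank equality $|T| = |\Lambda|$ is classical and follows from a Mantese--Reiten style counting argument exploiting the $\add T$-coresolution of $D\Lambda$ guaranteed by $D\Lambda \in \YY_T$. The substantive content is therefore maximality: given $T \oplus M$ self-orthogonal, show $M \in \add T$. The cleanest route is via the Generalized Nakayama conjecture, which yields $|T \oplus M| \leq |\Lambda| = |T|$ and hence $M \in \add T$; an unconditional plan is to prove that $T \oplus M$ is itself Wakamatsu tilting by promoting the $\add T$-coresolution of $D\Lambda$ to an $\add(T \oplus M)$-coresolution with cokernels in $(T \oplus M)^\perp$, using the vanishing $\Ext^{>0}_\Lambda(M, -) = 0$ on these cokernels supplied by the self-orthogonality, and then to apply the rank equality to $T \oplus M$.

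For the second direction, assume $T$ is self-orthogonal with $|T| = |\Lambda|$. Maximality reduces again to the Generalized Nakayama conjecture, or unconditionally to showing $M \in \add T$ directly from the rank count via an approximation argument. The harder statement is that $T$ is Wakamatsu tilting, i.e.\ $D\Lambda \in \YY_T$. The plan is to build, for each indecomposable summand $I$ of $D\Lambda$, an exact sequence $0 \to I \to T^0 \to T^1 \to \cdots$ with $T^i \in \add T$ and successive cokernels in $T^\perp$, constructed iteratively via minimal left $\add T$-approximations. The rank equality $|T| = |\Lambda|$ enters as a dimension count ensuring that $\add T$ contains enough objects for the first monomorphism $I \hookrightarrow T^0$ to exist and for the construction to continue without stalling in $T^\perp$.

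The main obstacle is that both directions depend on iterative constructions whose convergence is guaranteed under the finite-cover hypothesis of Theorem~\ref{thm:A} but not in general. Furthermore, the conjecture specialises to the Auslander--Reiten conjecture at $T = \Lambda$ and is intertwined with both the Wakamatsu tilting conjecture and the Generalized Nakayama conjecture, so its full resolution would settle several long-standing open problems simultaneously. A more tractable intermediate target is the case where $T^\perp$ is covariantly or contravariantly finite, where partial analogues of Bongartz completion remain available and the approximation arguments above can still be run to completion.
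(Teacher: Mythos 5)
This statement is Conjecture \ref{conj:intro}, which the paper explicitly poses as an \emph{open problem}: there is no proof of it in the paper. What the paper supplies instead is a decomposition into the sub-conjectures (BC), (PIC), (MSOC), (wMSOC) of Section \ref{sec:conj}, logical implications among them in Proposition \ref{prop:conj-rel}, and a proof of all of them only under the finiteness hypothesis of Theorem \ref{thm:main2}. Your proposal correctly sees the conjecture as bundling these open problems, but several of its concrete steps are incorrect. The assertion that $|T| = |\Lambda|$ for a Wakamatsu tilting $T$ is ``classical'' is wrong: this is exactly the open Proj=Inj conjecture (PIC), and the counting argument you invoke requires a \emph{finite} $T$-(co)resolution of $\Lambda$ or $D\Lambda$, which is precisely what is not available for a general Wakamatsu tilting module (Mantese--Reiten's finiteness results concern modules of finite projective dimension). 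Next, in your ``unconditional plan'' for the first direction, self-orthogonality of $T \oplus M$ yields $\Ext^{>0}_\Lambda(M, T) = 0$, but it does \emph{not} yield $\Ext^{>0}_\Lambda(M, X) = 0$ for the intermediate images $X \in T^\perp$ in a $T$-resolution of $D\Lambda$; that vanishing is not ``supplied by self-orthogonality,'' and without it you cannot upgrade the $T$-resolution to a $(T \oplus M)$-resolution with images in $(T \oplus M)^\perp$.

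In the second direction there is also a directional confusion. To conclude $T$ is Wakamatsu tilting one needs $D\Lambda \in \YY_T$, i.e.\ a $T$-\emph{resolution} $\cdots \to T_1 \to T_0 \to D\Lambda \to 0$ with images in $T^\perp$, built from \emph{right} $\add T$-approximations; or dually $\Lambda \in \XX_T$, i.e.\ a coresolution $0 \to \Lambda \to T^0 \to T^1 \to \cdots$ with images in $^\perp T$ built from left approximations. Your plan mixes these: a coresolution $0 \to I \to T^0 \to \cdots$ of an injective $I$ built from left approximations with images in $T^\perp$ matches neither $\YY_T$ nor $\XX_T$. Moreover, the rank count $|T| = |\Lambda|$ by itself gives no control over whether the relevant minimal approximations are monic/epic or whether the iterated images stay in the correct perpendicular category---this is exactly where Theorem \ref{thm:main2} needs the Bongartz completion machinery and Lemma \ref{lem:exact-cat}, both of which require $\#\ind(T^\perp)$ finite. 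You flag this finiteness obstruction yourself in the closing paragraph, which is the correct assessment: the proposal is an honest road map that, unsurprisingly, cannot close an open conjecture, but it also contains the ``classical'' mis-attribution and the perpendicular/direction errors noted above.
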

In Section \ref{sec:conj}, we propose several conjectures related to this and its relation to famous homological conjectures such as the Auslander-Reiten conjecture and the Generalized Nakayama conjecture.
Here, we only mention one of the results:
\begin{propositioni}[= Proposition \ref{prop:conj-rel} (5)]
  The following conjectures are equivalent:
  \begin{enumerate}
    \item The Auslander-Reiten conjecture: $\Lambda_\Lambda$ is maximal self-orthogonal.
    \item Weak maximal self-orthogonal conjecture: If $T$ is projectively Wakamatsu tilting, then $T$ is maximal self-orthogonal.
  \end{enumerate}
\end{propositioni}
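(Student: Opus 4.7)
The plan is to prove the two implications separately.

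Direction $(2) \Rightarrow (1)$ is essentially by construction. The regular module $\Lambda_\Lambda$ is self-orthogonal and satisfies $\Lambda^\perp = \mod \Lambda$, so $\Lambda$ is trivially an $\Ext$-progenerator of its own $\perp$-category; hence $\Lambda_\Lambda$ is projectively Wakamatsu tilting. Applying $(2)$ to $T = \Lambda$ immediately yields that $\Lambda_\Lambda$ is maximal self-orthogonal, which is the AR conjecture.

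For the substantial direction $(1) \Rightarrow (2)$, I would read the equivalence of conjectures universally: assume the AR conjecture holds for every artin algebra. Fix $\Lambda$, a projectively Wakamatsu tilting $T \in \mod \Lambda$, and $M$ such that $T \oplus M$ is self-orthogonal; the target is $M \in \add T$. The strategy is to transport the problem to the endomorphism algebra $\Gamma := \End_\Lambda(T)^{\op}$ via the functor $F := \Hom_\Lambda(T, -)$, invoke the AR conjecture for $\Gamma$, and translate back. Since $\Ext^{>0}_\Lambda(T, M) = 0$ we have $M \in T^\perp$, and note $F(T) = \Gamma_\Gamma$.

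The key technical input is that when $T$ is projectively Wakamatsu tilting, $F$ restricts to a fully faithful, $\Ext$-preserving functor on $T^\perp$: for all $X, Y \in T^\perp$ and $i \geq 0$,
\[
  \Ext^i_\Lambda(X, Y) \cong \Ext^i_\Gamma(F(X), F(Y)).
\]
I would prove this by induction on $i$. The case $i = 0$ reduces, via the $\Ext$-progenerator property of $T$ in $T^\perp$ (which yields a short exact sequence $0 \to X' \to T_0 \to X \to 0$ with $T_0 \in \add T$ and $X' \in T^\perp$), to Yoneda's lemma on $\add T$. The inductive step is dimension shifting along the same short exact sequence, using that $\Ext^{>0}_\Lambda(T_0, Y) = 0$ (since $Y \in T^\perp$) on the $\Lambda$-side and $\Ext^{>0}_\Gamma(F(T_0), F(Y)) = 0$ (since $F(T_0) \in \add \Gamma$ is projective) on the $\Gamma$-side.

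Granted this $\Ext$-preservation, self-orthogonality of $T \oplus M$ over $\Lambda$ transfers to self-orthogonality of $\Gamma \oplus F(M)$ over $\Gamma$. Invoking the AR conjecture for $\Gamma$ gives $F(M) \in \add \Gamma = \add F(T)$, and reflecting through the fully faithful $F$ yields $M \in \add T$. The main obstacle is the $\Ext$-preservation statement itself: it is in the spirit of the tilted-algebra equivalence from classical tilting theory, but since one cannot appeal to a derived equivalence in the projectively Wakamatsu setting, the dimension-shifting argument via $\add T$-resolutions (guaranteed by $T^\perp = \YY_T$) is essential and is where the projectively Wakamatsu hypothesis is really used.
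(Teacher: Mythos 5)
Your proposal is correct and follows essentially the same route as the paper: both directions match, and for (1) $\Rightarrow$ (2) the paper likewise transports the problem to $\Gamma = \End_\Lambda(T)$ via $F = \Hom_\Lambda(T,-)$ and applies ARC over $\Gamma$. The only cosmetic difference is that you establish the $\Ext$-preserving, fully-faithful nature of $F$ on $T^\perp$ by a hands-on dimension-shifting induction along the $\add T$-resolutions supplied by $T^\perp = \YY_T$, whereas the paper cites \cite[Proposition 2.8]{eno-wak} for the exact equivalence $T^\perp \simeq F(T^\perp)$ onto a resolving subcategory of $\mod\Gamma$ and then invokes the general fact that higher Ext-groups inside a (co)resolving subcategory agree with those in the ambient module category.
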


\addtocontents{toc}{\SkipTocEntry}
\subsection*{Conventions and notation}
Throughout this paper, we denote by $\Lambda$ an artin $R$-algebra over a commutative artinian ring $R$.
We often omit the base ring $R$ and simply refer to $\Lambda$ as an artin algebra.  The category of finitely generated right $\Lambda$-modules is denoted by $\mod\Lambda$. The subcategory of $\mod\Lambda$ consisting of all projective (resp. injective) $\Lambda$-modules is denoted by $\proj\Lambda$ (resp. $\inj\Lambda$).
The Matlis duality functor is denoted by $D: \mod\Lambda \to \mod\Lambda^{\op}$.

For modules $X, Y \in \mod\Lambda$ and $d \geq 0$, we use the notation $\Ext_\Lambda^{>d}(X, Y) = 0$ to indicate that $\Ext_\Lambda^i(X, Y) = 0$ for all $i > d$. We use similar notation for subcategories $\CC$ of $\mod\Lambda$ such as $\Ext_\Lambda^{>0}(\CC, X) = 0$.

All subcategories are assumed to be full and closed under isomorphisms.
For a $\Lambda$-module $X \in \mod\Lambda$, we denote by $\add X$ the subcategory of $\mod\Lambda$ consisting of all direct summands of finite direct sums of $X$. The number of non-isomorphic indecomposable direct summands of $X$ is denoted by $|X|$.
For a subcategory $\CC$ of $\mod\Lambda$ closed under direct summands, we denote by $\ind\CC$ the set of isomorphism classes of indecomposable objects in $\CC$, and $\#\ind\CC$ denotes its cardinality.

Let $\EE$ be a subcategory of $\mod\Lambda$ closed under extensions and direct summands. We denote by $\PP(\EE)$ the subcategory of $\EE$ consisting of objects which are \emph{$\Ext$-projective} in $\EE$, that is, objects $P \in \EE$ such that $\Ext^1_\Lambda(P, \EE) = 0$.
Note that we only consider $\Ext^1$. Dually, we denote by $\II(\EE)$ the subcategory of $\EE$ consisting of objects in $\EE$ which are $\Ext$-injective, that is, objects $I \in \EE$ such that $\Ext^1_\Lambda(\EE, I) = 0$.
We say that $\EE$ has \emph{enough $\Ext$-projectives} if for every $X \in \EE$, there exists a short exact sequence
\[
  \begin{tikzcd}
    0 \rar & X' \rar & P_0 \rar & X \rar & 0
  \end{tikzcd}
\]
in $\mod\Lambda$ with $X' \in \EE$ and $P_0 \in \PP(\EE)$. If $\EE$ has enough $\Ext$-projectives and $P \in \EE$ satisfies $\add P = \PP(\EE)$, then $P$ is called an \emph{$\Ext$-progenerator}. This is equivalent to that for every $X \in \EE$ there exists a short exact sequence of the above form with $P_0 \in \add P$ and $X' \in \EE$.
Dually, we define \emph{enough $\Ext$-injectives} and an \emph{$\Ext$-injective cogenerator} of $\EE$.

For an exact category $\EE$, we refer to \emph{progenerators} and \emph{injective cogenerators} in the same way as $\Ext$-progenerators and $\Ext$-injective cogenerators above. If $\EE$ has enough projectives and enough injectives, then we define the higher Ext group $\Ext_\EE^i(-,-)$ on $\EE$ by using projective or injective resolutions in $\EE$ as usual. For $X \in \EE$, its projective dimension $\pd_\EE X$ and injective dimension $\id_\EE X$ are defined in the same way as for usual modules.

\section{Preliminaries}\label{sec:2}
In this section, we recall the basic concepts and results of Wakamatsu tilting theory and finite covers of a category.
We begin by recalling the concept of self-orthogonal modules, which is the main focus of this paper.
\begin{definition}
  A $\Lambda$-module $T \in \mod\Lambda$ is \emph{self-orthogonal} if $\Ext^{>0}_\Lambda(T, T) = 0$ holds.
\end{definition}

Among self-orthogonal modules, (co)tilting modules have been the most studied. We will now recall their definition.
\begin{definition}
  A $\Lambda$-module $T \in \mod\Lambda$ is \emph{tilting} if it satisfies the following conditions.
  \begin{enumerate}
    \item $\pd T_\Lambda$ is finite.
    \item $T$ is self-orthogonal.
    \item There is an exact sequence
          \[
            \begin{tikzcd}
              0 \rar & \Lambda \rar & T^0 \rar & T^1 \rar & \cdots  \rar & T^d \rar & 0
            \end{tikzcd}
          \]
          with $T^i \in \add T$.
  \end{enumerate}
  Dually, $T\in \mod\Lambda$ is \emph{cotilting} if $DT$ is tilting.
\end{definition}

For a self-orthogonal module $T$, we consider the following four subcategories of $\mod\Lambda$:
\begin{definition}\label{def:subcat}
  Let $T$ be a self-orthogonal $\Lambda$-module.
  \begin{enumerate}
    \item $^\perp T$ is the subcategory of $\mod\Lambda$ consisting of all modules $X$ such that $\Ext^{>0}_\Lambda(X, T) = 0$.
    \item $\XX_T$ is the subcategory of $\mod\Lambda$ consisting of all modules $X$ such that there exists an exact sequence
          \[
            \begin{tikzcd}
              0 \rar & X \rar["f^0"] & T^0 \rar["f^1"] & T^1 \rar["f^2"] & T^2 \rar & \cdots
            \end{tikzcd}
          \]
          with $T^i \in \add T$ and $\im f^i \in {}^\perp T$ for all $i \geq 0$ (in particular, $X \in {}^\perp T$).
          We call such a sequence a \emph{$T$-coresolution of $X$}.
    \item $T^\perp$ is the subcategory of $\mod\Lambda$ consisting of all modules $X$ such that $\Ext_\Lambda^{>0}(T, X) = 0$.
    \item $\YY_T$ is the subcategory of $\mod\Lambda$ consisting of all modules $Y$ such that there exists an exact sequence
          \[
            \begin{tikzcd}
              \cdots \rar & T_2 \rar["g_2"] & T_1 \rar["g_1"] & T_0 \rar["g_0"] & Y \rar & 0
            \end{tikzcd}
          \]
          with $T_i \in \add T$ and $\im g_i \in T^\perp$ for all $i \geq 0$ (in particular, $X \in T^\perp$).
          We call such a sequence a \emph{$T$-resolution of $Y$}.
  \end{enumerate}
\end{definition}
The subcategories ${}^\perp T$ and $\XX_T$ are defined such that $T$ behaves like an injective module in them. On the other hand, $T^\perp$ and $\YY_T$ are subcategories in which $T$ behaves like a projective module. Let us describe the basic properties of these subcategories.
Recall that a subcategory $\CC$ of $\mod\Lambda$ is called \emph{resolving} if $\Lambda \in \CC$ and $\CC$ is closed under extensions, direct summands, and kernels of surjections. Dually, $\CC$ is \emph{coresolving} if $D\Lambda \in \CC$ and $\CC$ is closed under extensions, direct summands, and cokernels of injections.
\begin{proposition}[{\cite[Proposition 5.1]{applications}}]\label{prop:4cats-projinj}
  Let $T$ be a self-orthogonal module. Then $^\perp T$, $\XX_T$, $T^\perp$, and $\YY_T$ are all closed under extensions and direct summands. In addition, they satisfy the following properties.
  \begin{enumerate}
    \item $^\perp T$ is a resolving subcategory with an $\Ext$-progenerator $\Lambda$, and $T \in \II({}^\perp T)$ holds.
    \item $\XX_T$ has an $\Ext$-injective cogenerator $T$, and is closed under extensions, direct summands, and kernels of surjections.
    \item $T^\perp$ is a coresolving subcategory with an $\Ext$-injective cogenerator $D\Lambda$, and $T \in \PP(T^\perp)$ holds.
    \item $\YY_T$ has an $\Ext$-progenerator $T$, and is closed under extensions, direct summands, and cokernels of injections.
  \end{enumerate}
\end{proposition}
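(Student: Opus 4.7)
The plan is to first establish the closure under extensions and direct summands for $^\perp T$ and $\XX_T$, then read off items (1) and (2), and finally deduce items (3) and (4) by Matlis duality.

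For $^\perp T$, both closures are immediate from the long exact sequence of $\Ext$ in the first variable and the additivity $\Ext^i_\Lambda(A \oplus B, T) \cong \Ext^i_\Lambda(A, T) \oplus \Ext^i_\Lambda(B, T)$. Item (1) then follows quickly: $\Lambda \in {}^\perp T$ because $\Ext^{>0}_\Lambda(\Lambda, -) = 0$, and closure under kernels of surjections is again given by the long exact sequence, so $^\perp T$ is resolving. For any $X \in {}^\perp T$, a projective presentation $\Lambda^n \twoheadrightarrow X$ has kernel in $^\perp T$, so $\Lambda$ is an $\Ext$-progenerator, and $T \in \II({}^\perp T)$ is immediate from the defining condition.

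The harder analysis is $\XX_T$, which I would treat in three stages in the order extensions, kernels of surjections, direct summands. For extensions: given $0 \to A \to B \to C \to 0$ with $A, C \in \XX_T$, the long exact sequence gives $B \in {}^\perp T$, and the horseshoe lemma (using $\Ext^1_\Lambda(C, T^i_A) = 0$ at each step of the coresolution of $A$) splices the $T$-coresolutions of $A$ and $C$ into one for $B$. For kernels of surjections: given $0 \to A \to B \to C \to 0$ with $B, C \in \XX_T$, take the first step $0 \to B \to T^0 \to K^0 \to 0$ of a $T$-coresolution of $B$ (so $K^0 \in \XX_T$ by shifting), and the snake lemma applied to $A \hookrightarrow B \hookrightarrow T^0$ yields $0 \to C \to T^0/A \to K^0 \to 0$, so $T^0/A$ is an extension in $\XX_T$; iterating gives a $T$-coresolution of $A$. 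For direct summands: given $X \oplus Y \in \XX_T$, let $X \oplus Y \to T_0$ be the left minimal $\add T$-approximation (which exists by functorial finiteness of $\add T$); it is injective because any embedding of $X \oplus Y$ into $\add T$ must factor through it, and its cokernel $C$ lies in $^\perp T$ by the approximation property combined with the long exact sequence. Comparing the minimal approximation with the first step $T^0$ of a given coresolution of $X \oplus Y$ produces decompositions $T^0 \cong T_0 \oplus T''$ and $K^0 \cong C \oplus T''$ for some $T'' \in \add T$; since $K^0 \in \XX_T$ and the split surjection $K^0 \twoheadrightarrow T''$ has kernel $C$, the kernel-of-surjection closure just established yields $C \in \XX_T$. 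Functoriality of the minimal approximation splits it as $(X \to T_0(X)) \oplus (Y \to T_0(Y))$ with cokernels $C_X \oplus C_Y = C \in \XX_T$, and iterating on $C_X \oplus C_Y$ produces full $T$-coresolutions for $X$ and $Y$ separately.

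Item (2) then follows: $T \in \XX_T$ via the trivial coresolution, the first step of a coresolution exhibits $T$ as a cogenerator for any $X \in \XX_T$, and $\Ext$-injectivity of $T$ in $\XX_T$ is a consequence of $\XX_T \subseteq {}^\perp T$. Items (3) and (4) follow from (1) and (2) by the Matlis duality $D \colon \mod\Lambda \to \mod\Lambda^{\op}$: $T$ is self-orthogonal in $\mod\Lambda$ iff $DT$ is self-orthogonal in $\mod\Lambda^{\op}$, and $D$ identifies $T^\perp$ with $^\perp(DT)$, $\YY_T$ with $\XX_{DT}$, $D\Lambda$ with the regular $\Lambda^{\op}$-module, and $\PP$ with $\II$. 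The main obstacle will be the direct summand closure for $\XX_T$: it requires the functorial finiteness of $\add T$, the minimality comparison producing the decompositions $T^0 \cong T_0 \oplus T''$ and $K^0 \cong C \oplus T''$, and the already-established kernel-of-surjection closure — an orchestration that is the real content of the proof, whereas the remainder reduces to long-exact-sequence bookkeeping or formal duality.
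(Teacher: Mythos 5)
Your proof is correct, and since the paper attributes this result to Auslander--Reiten without reproducing a proof, there is no in-paper argument to compare against directly. The outline for $^\perp T$ is routine, and your staging of $\XX_T$ in the order extensions $\to$ kernels of surjections $\to$ direct summands is the right way to avoid circularity: the kernel-of-surjection case uses only extension closure (via the snake-lemma splicing), and the direct-summand case uses only kernel-of-surjection closure (to extract $C$ from the split surjection $K^0 \twoheadrightarrow T''$). You also correctly identify that the real content is the comparison between the minimal left $\add T$-approximation of $X \oplus Y$ and the first term of an arbitrary $T$-coresolution, together with the fact that a minimal left approximation of a direct sum decomposes over the summands. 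The only place I would tighten the exposition is the final ``iterating on $C_X \oplus C_Y$'' step: at stage $n$ you do not yet know $C_X^{(n)} \in \XX_T$, only $C^{(n)} = C_X^{(n)} \oplus C_Y^{(n)} \in \XX_T$, so you must re-run the whole minimal-approximation comparison on $C^{(n)}$ to conclude (a) the component approximations are injective with cokernels in $^\perp T$ (direct summands of $C^{(n+1)} \in {}^\perp T$) and (b) $C^{(n+1)} \in \XX_T$; this feeds the induction. The resulting coresolutions of $X$ and $Y$ then have all cosyzygies in $^\perp T$ by summand-closure of $^\perp T$, hence $X, Y \in \XX_T$. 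Also, in the horseshoe step it is $\Ext^1$ of the $i$-th cosyzygy of $C$, not of $C$ itself, against $T^i_A$ that must vanish, though the needed hypothesis is the same ($^\perp T$-membership of the cosyzygies). Items (2), (3), (4) via $\XX_T \subseteq {}^\perp T$ and Matlis duality are fine.
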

The following example relates our theory to what is known as Gorenstein homological algebra.
\begin{example}
  Trivially $\Lambda_\Lambda$ itself is self-orthogonal. In this case, $\XX_\Lambda$ is precisely the category $\GP \Lambda$ of \emph{Gorenstein-projective} modules.
  On the other hand, a module in $^\perp \Lambda$ is recently called \emph{semi-Gorenstein-projective}, and its relation to Gorenstein-projective modules has been recently studied by \cite{RZ, ma2}.
\end{example}

\begin{example}\label{ex:tilt-wak-proj}
  Let $T$ be a tilting module. Then $T^\perp = \YY_T$ holds by the result of Auslander and Reiten \cite[Theorem 5.4]{applications}. Additionally, if $T$ is a classical tilting module (that is, $\pd T_\Lambda \leq 1$), then it is well-known that $T^\perp = \Fac T$, where $\Fac T$ consists of all modules which have surjections from objects in $\add T$.
\end{example}

It is natural to consider the situation where $\XX_T$ is resolving and $\YY_T$ are coresolving, so that they both have $\Ext$-progenerators and $\Ext$-injective cogenerators. This leads to the notion of Wakamatsu tilting modules.
\begin{definition}
  A $\Lambda$-module $T \in \mod\Lambda$ is \emph{Wakamatsu tilting} it is self-orthogonal and $\Lambda \in \XX_T$ (or equivalently, if $\XX_T$ is a resolving subcategory of $\mod\Lambda$).
\end{definition}
It is known that this definition is self-dual:
\begin{proposition}[{\cite[Proposition 2.2]{BS}}]\label{prop:wak-dual}
  Let $T$ be a self-orthogonal module. Then $T$ is Wakamatsu tilting if and only if $D \Lambda \in \YY_T$,or equivalently, $\YY_T$ is a coresolving subcategory of $\mod\Lambda$.
\end{proposition}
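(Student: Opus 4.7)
The statement contains two separate equivalences: (a) $T$ is Wakamatsu tilting (meaning $\Lambda \in \XX_T$) iff $D\Lambda \in \YY_T$; and (b) $D\Lambda \in \YY_T$ iff $\YY_T$ is coresolving. The equivalence (b) is almost immediate: Proposition \ref{prop:4cats-projinj}(4) already asserts that $\YY_T$ is closed under extensions, direct summands, and cokernels of injections, so the coresolving condition reduces to the single extra requirement $D\Lambda \in \YY_T$.

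The main content is (a), the self-duality of the Wakamatsu tilting condition. As a first observation, Matlis duality $D$ gives a bijection between $\XX_T \subseteq \mod\Lambda$ and the analogous class $\YY_{DT} \subseteq \mod\Lambda^{\op}$ attached to the self-orthogonal $\Lambda^{\op}$-module $DT$: applying $D$ term by term to a $T$-coresolution of $X$ yields a $DT$-resolution of $DX$, using the fact that $K \in {}^\perp T$ in $\mod\Lambda$ iff $DK \in (DT)^\perp$ in $\mod\Lambda^{\op}$. In particular, $\Lambda \in \XX_T$ translates to $D\Lambda \in \YY_{DT}$ in $\mod\Lambda^{\op}$. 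The proposition, however, demands $D\Lambda \in \YY_T$ on the same side, so a further bridge is required.

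The plan for this bridge is to exploit the exact functor $\Hom_\Lambda(T, -) \colon T^\perp \to \mod B$, where $B = \End_\Lambda(T)^{\op}$; its exactness follows from $T \in \PP(T^\perp)$ given by Proposition \ref{prop:4cats-projinj}(3). Given $\Lambda \in \XX_T$ with coresolution $0 \to \Lambda \to T^0 \to T^1 \to \cdots$ whose successive images lie in ${}^\perp T$, applying $\Hom_\Lambda(-, T)$ and splicing yields an exact sequence
\[
\cdots \to \Hom_\Lambda(T^1, T) \to \Hom_\Lambda(T^0, T) \to T \to 0,
\]
a projective resolution of $T$ over $B$. Combining this with Matlis duality and the adjunction between $\Hom_\Lambda(T, -)$ and $- \otimes_B T$, one then transports the data back to construct a $T$-resolution of $D\Lambda$ in $\mod\Lambda$; the reverse direction follows by applying the same argument to $DT$ over $\Lambda^{\op}$ and pulling back through $D$. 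The main obstacle I expect is this last step: ensuring that the transported resolution actually lies in $\YY_T$, i.e.\ that each intermediate kernel belongs to $T^\perp$. This requires a careful compatibility check showing that the back-and-forth through $\mod B$ respects the vanishing of $\Ext^{>0}_\Lambda(T, -)$ at every stage, and not merely that it produces a bare exact sequence in $\mod\Lambda$.
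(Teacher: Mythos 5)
The paper does not actually give a proof of this statement: it is imported directly from \cite[Proposition 2.2]{BS}, so there is no in-paper argument to compare against. Your proposal is therefore a from-scratch attempt at a result the paper cites. Your split into the two equivalences (a) and (b) is the right one, and (b) is indeed immediate from Proposition \ref{prop:4cats-projinj}(4). You also correctly observe that naive term-by-term Matlis duality sends a $T$-coresolution of $\Lambda$ to a $DT$-resolution of $D\Lambda$ in $\mod\Lambda^{\op}$, not to a $T$-resolution of $D\Lambda$ in $\mod\Lambda$, so the obvious shortcut fails.

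For part (a), however, there is a genuine gap --- one you yourself flag as ``the main obstacle.'' Applying $\Hom_\Lambda(-,T)$ to the coresolution of $\Lambda$ to obtain a projective resolution of ${}_\Gamma T$ over $\Gamma := \End_\Lambda(T)$ is correct and is the standard first step, but the asserted ``transport back'' through $\mod B$ is a statement of intent, not an argument. Two things are missing. First, after passing back to $\mod\Lambda$, the terms of the resulting sequence must be identified with modules in $\add T$, which is not automatic: it requires the double centralizer property $\End_\Gamma({}_\Gamma T) \cong \Lambda$ and the compatibility of the functor $\Hom_\Gamma(-,T)$ with the constructed $\Gamma$-projective resolution, neither of which your sketch establishes. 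Second --- and this is exactly the obstacle you name --- the kernels in the resulting complex must be shown to land in $T^\perp$; the $\Ext$-vanishing does not transfer for free through $\mod B$, and this is precisely what makes the cited result nontrivial. As written, the proposal is a plausible program rather than a proof. The reference \cite{BS} supplies the missing ingredients, essentially by first establishing the bimodule-theoretic properties of the pair $(\Lambda, \Gamma, T)$ and only then dualizing; if you want a self-contained argument instead of a citation, that is what you would need to add.
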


Since the notion of Wakamatsu tilting modules is self-dual, we have a choice whether to use $\XX_T$ and ${}^\perp T$, or $\YY_T$ and $T^\perp$. In this paper, we regard a Wakamatsu tilting module as a generalization of tilting modules, so we mainly focus on $\YY_T$ and $T^\perp$, and omit the dual description of $\XX_T$ and ${}^\perp T$. However, since the category $\XX_\Lambda = \GP \Lambda$ of Gorenstein-projective modules is well-studied, we will use $\XX_T$ and ${}^\perp T$ when $T = \Lambda$.

For later use, we prepare the following observation of higher Ext groups, which we will use freely. The proof is straightforward since a resolving (resp. coresolving) subcategory is closed under taking syzygies (resp. cosyzygies).
\begin{lemma}
  Let $\EE$ be a resolving subcategory or a coresolving subcategory of $\mod\Lambda$. Then $\Ext^{>0}_\Lambda(\PP(\EE), \EE) = 0$ and $\Ext_\Lambda^{>0}(\EE, \II(\EE)) = 0$. Moreover, $\Ext_\EE^i(X, Y) \iso \Ext_\Lambda^i(X, Y)$ holds for $X, Y \in \EE$ when we regard $\EE$ as an exact category.
\end{lemma}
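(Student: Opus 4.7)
The plan is to reduce all four Ext-vanishing subcases (resolving versus coresolving, paired with $\PP(\EE)$ or $\II(\EE)$) to routine dimension shifting by first pinning down $\PP(\EE)$ and $\II(\EE)$ in the easy situations.

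The key auxiliary step is to show that if $\EE$ is resolving then $\PP(\EE) = \proj\Lambda$, and dually if $\EE$ is coresolving then $\II(\EE) = \inj\Lambda$. For $P \in \PP(\EE)$, choose any surjection $Q \twoheadrightarrow P$ from a projective $Q$; the kernel $\Omega P$ lies in $\EE$ by resolvingness, so the defining condition $\Ext^1_\Lambda(P, \EE) = 0$ specializes to $\Ext^1_\Lambda(P, \Omega P) = 0$, forcing the sequence to split. Hence $P$ is a summand of the projective $Q$, so $P \in \proj\Lambda$. This single observation disposes of two of the four cases outright: $\Ext^{>0}_\Lambda(\PP(\EE), \EE) = 0$ is immediate when $\EE$ is resolving, and dually $\Ext^{>0}_\Lambda(\EE, \II(\EE)) = 0$ when $\EE$ is coresolving.

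For the two remaining cases I would dimension-shift in the variable that naturally stays inside $\EE$. When $\EE$ is resolving and $I \in \II(\EE)$, take a projective resolution of $X \in \EE$ in $\mod\Lambda$; the syzygies $\Omega^i X$ belong to $\EE$ by closure under kernels of surjections, and the long exact sequences for $\Hom_\Lambda(-, I)$ collapse because $\Ext^{>0}_\Lambda(Q_j, I) = 0$ for each projective $Q_j$, yielding $\Ext^{i+1}_\Lambda(X, I) \iso \Ext^1_\Lambda(\Omega^i X, I) = 0$. Symmetrically, when $\EE$ is coresolving and $P \in \PP(\EE)$, take an injective coresolution of $X \in \EE$: the cosyzygies $\Omega^{-i} X$ stay in $\EE$, and $\Ext^{i+1}_\Lambda(P, X) \iso \Ext^1_\Lambda(P, \Omega^{-i} X) = 0$.

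For the final isomorphism $\Ext^i_\EE(X, Y) \iso \Ext^i_\Lambda(X, Y)$, the same auxiliary observation shows that under either hypothesis the exact category $\EE$ has enough projectives or enough injectives, realized by modules that are already projective or injective in $\mod\Lambda$. Consequently a projective resolution of $X$ in $\mod\Lambda$ (when $\EE$ is resolving), or an injective coresolution of $Y$ (when coresolving), lies entirely in $\EE$ and doubles as a resolution in the exact category $\EE$; computing cohomology of $\Hom_\Lambda = \Hom_\EE$ against this common resolution produces both $\Ext^i_\Lambda$ and $\Ext^i_\EE$ at once. The main subtlety I expect is the cross pairings, where the naive temptation is to dimension-shift the ``wrong'' variable---the one that resolving or coresolving does not control---so the recognition that one must instead shift in the other variable is the crux of the argument.
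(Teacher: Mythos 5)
Your proof is correct and follows the same approach the paper has in mind: the paper's one-line hint ("closed under taking syzygies, resp. cosyzygies") unpacks to exactly your auxiliary observation that $\PP(\EE) = \proj\Lambda$ for resolving $\EE$ (proved via the split syzygy sequence) together with the standard dimension shift in the variable that stays inside $\EE$, and likewise dually.
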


The following result from Wakamatsu tilting theory will be needed later.
\begin{lemma}[{\cite[Corollary 3.2, Theorem 4.2]{wakamatsu2}}]\label{lem:wak-duality}
  Let $T$ be a Wakamatsu tilting $\Lambda$-module, and put $\Gamma := \End_\Lambda(T)$. Then $_\Gamma T$ is a Wakamatsu tilting left $\Gamma$-module, and the functors $\Hom_\Lambda(-, T)$ and $\Hom_\Gamma(-, T)$ induce a duality between exact categories $\XX_{T_\Lambda} \subseteq \mod\Lambda$ and $\XX_{_\Gamma T} \subseteq \mod\Gamma^{\op}$.
\end{lemma}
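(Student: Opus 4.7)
The plan is to proceed in three steps: (A) show that ${}_\Gamma T$ is self-orthogonal and establish evaluation isomorphisms on $\XX_T$; (B) show that ${}_\Gamma T$ is Wakamatsu tilting, i.e., $\Gamma \in \XX_{{}_\Gamma T}$; and (C) conclude the duality.

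For (A), I would start with the $T$-coresolution $0 \to \Lambda \to T^0 \to T^1 \to \cdots$ with $\im f^i \in {}^\perp T$, guaranteed by $\Lambda \in \XX_T$. Applying $\Hom_\Lambda(-, T)$ and using the vanishing $\Ext^1_\Lambda(\im f^i, T) = 0$ makes each constituent short exact sequence dualize into one of left $\Gamma$-modules; splicing yields an exact sequence
\[
  \cdots \to \Hom_\Lambda(T^1, T) \to \Hom_\Lambda(T^0, T) \to T \to 0,
\]
which is a projective resolution of ${}_\Gamma T$ since $\Hom_\Lambda(T^i, T) \in \add {}_\Gamma \Gamma$. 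Applying $\Hom_\Gamma(-, T)$ to this resolution, and using that the evaluation map $T^i \to \Hom_\Gamma(\Hom_\Lambda(T^i, T), T)$ is an isomorphism on $\add T_\Lambda$ (since $\Gamma = \End_\Lambda T$), the resulting complex is isomorphic to the tail $0 \to T^0 \to T^1 \to \cdots$ of the original coresolution. Its cohomology gives $\End_\Gamma T \iso \Lambda$ and $\Ext^{>0}_\Gamma(T, T) = 0$. The same argument applied to any $X \in \XX_T$ (using a $T$-coresolution of $X$ in place of that of $\Lambda$) shows $\Hom_\Lambda(X, T) \in {}^\perp {}_\Gamma T$ and that the evaluation $X \to \Hom_\Gamma(\Hom_\Lambda(X, T), T)$ is an isomorphism.

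Step (B) is the main obstacle, since (A) only produces projective resolutions by $\add {}_\Gamma \Gamma$ on the $\Gamma$-side, not the needed ${}_\Gamma T$-coresolutions. The resolution is: for $X \in \XX_T$, take any projective resolution $\cdots \to P_1 \to P_0 \to X \to 0$ over $\Lambda$; since $P_i \in \add \Lambda \subseteq \XX_T$ and $\XX_T$ is closed under kernels of surjections by Proposition \ref{prop:4cats-projinj}(2), every syzygy $X_i$ lies in $\XX_T$. Applying $\Hom_\Lambda(-, T)$ to each short exact sequence $0 \to X_{i+1} \to P_i \to X_i \to 0$, the vanishing $\Ext^1_\Lambda(X_i, T) = 0$ yields a short exact sequence of left $\Gamma$-modules; splicing produces an exact sequence
\[
  0 \to \Hom_\Lambda(X, T) \to \Hom_\Lambda(P_0, T) \to \Hom_\Lambda(P_1, T) \to \cdots
\]
with $\Hom_\Lambda(P_i, T) \in \add {}_\Gamma T$ (via $\Hom_\Lambda(\Lambda, T) = T$) and images $\Hom_\Lambda(X_i, T)$ lying in ${}^\perp {}_\Gamma T$ by the final remark of (A). This is precisely a ${}_\Gamma T$-coresolution of $\Hom_\Lambda(X, T)$, proving $\Hom_\Lambda(-, T)$ sends $\XX_T$ into $\XX_{{}_\Gamma T}$; specializing to $X = T \in \XX_T$ yields $\Gamma = \Hom_\Lambda(T, T) \in \XX_{{}_\Gamma T}$, so ${}_\Gamma T$ is Wakamatsu tilting.

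For step (C), by symmetry the same arguments applied to the now-Wakamatsu-tilting ${}_\Gamma T$ show that $\Hom_\Gamma(-, T)$ sends $\XX_{{}_\Gamma T}$ into $\XX_{T_\Lambda}$. The evaluation isomorphism $X \iso \Hom_\Gamma(\Hom_\Lambda(X, T), T)$ on $\XX_T$ from (A), together with its mirror $Y \iso \Hom_\Lambda(\Hom_\Gamma(Y, T), T)$ on $\XX_{{}_\Gamma T}$, shows these contravariant functors are mutually quasi-inverse, completing the duality of exact categories.
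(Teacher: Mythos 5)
The paper cites this result from \cite{wakamatsu2} without proof, so there is no in-paper argument to compare against; I will simply evaluate your proposal on its own terms. Your argument is correct and is the standard double-dualization proof. Step (A) is right: dualizing a $T$-coresolution of $\Lambda$ (or of any $X \in \XX_T$) term by term, using $\Ext^1_\Lambda(\im f^i, T)=0$, produces a projective $\Gamma$-resolution of ${}_\Gamma T$ (resp.\ of $\Hom_\Lambda(X,T)$); applying $\Hom_\Gamma(-,T)$ to it and comparing with the original tail via the evaluation isomorphisms on $\add T$ simultaneously gives $\Ext^{>0}_\Gamma(T,T)=0$, $\End_\Gamma(T)\iso\Lambda$, $\Hom_\Lambda(X,T)\in{}^\perp{}_\Gamma T$, and the natural isomorphism $X\iso\Hom_\Gamma(\Hom_\Lambda(X,T),T)$. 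Step (B) correctly exploits that $\XX_T$ is resolving (so syzygies of $X$ along a projective $\Lambda$-resolution stay in $\XX_T$) and that $\Hom_\Lambda(\Lambda,T)=T$, turning the projective $\Lambda$-resolution into a genuine ${}_\Gamma T$-coresolution of $\Hom_\Lambda(X,T)$ with images $\Hom_\Lambda(X_i,T)\in{}^\perp{}_\Gamma T$; specializing to $X=T$ gives $\Gamma\in\XX_{{}_\Gamma T}$.

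The only point left implicit is step (C)'s claim that the two functors give a duality of \emph{exact} categories rather than merely of additive categories: one should note that $\Hom_\Lambda(-,T)$ sends a conflation $0\to A\to B\to C\to 0$ in $\XX_T$ to a short exact sequence (because $\Ext^1_\Lambda(C,T)=0$ as $C\in\XX_T\subseteq{}^\perp T$) whose terms land in $\XX_{{}_\Gamma T}$ by step (B), and symmetrically for $\Hom_\Gamma(-,T)$. With that sentence added, the argument is complete.
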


Finally, let us briefly recall the notion of finite covers of a subcategory and its relation to covariant finiteness.
\begin{definition}
  Let $\CC$ be a subcategory of $\mod\Lambda$.
  \begin{enumerate}
    \item $P \in \CC$ is a \emph{cover} of $\CC$ if for every $C \in \CC$ there exists a surjection $P' \defl C$ with $P' \in \add P$.
    \item $\CC$ has a \emph{finite cover} if there exists some cover $P \in \CC$ of $\CC$.
    \item A cover $P$ of $\CC$ is called a \emph{minimal cover} if, for every cover $Q$ of $\CC$, we have $\add P \subseteq \add Q$.
  \end{enumerate}
\end{definition}
We have the following basic results on these concepts.
\begin{proposition}[{\cite[Corollary 2.4, Proposition 3.7]{AS}}]\label{prop:cover-fact}
  Let $\CC$ be a subcategory of $\mod\Lambda$ closed under extensions and direct summands.
  \begin{enumerate}
    \item If $\CC$ has a finite cover, then $\CC$ has a minimal cover.
    \item If $\CC$ is covariantly finite in $\mod\Lambda$, then $\CC$ has a finite cover, so it has a minimal cover.
    \item Let $P$ be a minimal cover of $\CC$. Then $P$ is $\Ext$-projective in $\CC$.
  \end{enumerate}
\end{proposition}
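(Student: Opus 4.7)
For (1), I would start with a finite cover $P$ of $\CC$ and decompose it via Krull-Schmidt as $P = \bigoplus_i P_i^{a_i}$ into pairwise non-isomorphic indecomposables. Call an indecomposable summand $P_i$ \emph{removable} if $\bigoplus_{j \neq i} P_j^{a_j}$ is still a cover of $\CC$, and iteratively drop removable summands. Since the number of distinct indecomposable summands strictly decreases, the process terminates at some $P^* \in \add P$ with no removable summand. To confirm that $P^*$ is actually a minimal cover in the stated sense (that is, $\add P^* \subseteq \add Q$ for every cover $Q$), assume for contradiction that some indecomposable summand $M$ of $P^*$ lies outside $\add Q$; a mixing argument combining $Q$ with the summands of $P^*$ other than $M$ then shows that $M$ was in fact removable from $P^*$, contradicting our choice.

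For (2), I would use covariant finiteness to fix a left $\CC$-approximation $f \colon \Lambda \to P$ of $\Lambda$ with $P \in \CC$, and argue that this $P$ is a cover. For any $C \in \CC$, choose a surjection $\Lambda^n \defl C$; each of its $n$ components $\Lambda \to C$ factors through $f$ by the approximation property (since $C \in \CC$), giving a factorization of $\Lambda^n \defl C$ through some $P^n \to C$. Surjectivity of the composite forces $P^n \to C$ to be surjective, so $P$ covers $\CC$, and part (1) then supplies a minimal cover.

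For (3), the main obstacle, I would argue by contradiction. Suppose $P$ is a minimal cover but $\Ext^1_\Lambda(P_i, X) \neq 0$ for some indecomposable summand $P_i$ of $P$ and some $X \in \CC$; extension closure produces a non-split short exact sequence $\sigma \colon 0 \to X \to E \to P_i \to 0$ with $E \in \CC$. The crucial observation is that $P' \oplus E$ is again a cover of $\CC$, where $P'$ denotes $P$ with all its $P_i$-summands removed: given any $C \in \CC$ with a surjection $P^n \defl C$, routing the $P_i$-components of this surjection through $E \defl P_i$ preserves the image and exhibits a new surjection from a power of $P' \oplus E$ onto $C$. By minimality, $P_i$ lies in $\add(P' \oplus E)$, and since $P_i \notin \add P'$ we conclude $E = P_i \oplus E_0$. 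Writing the map $E \to P_i$ in matrix form as $(\alpha, \beta)$ with $\alpha \in \End P_i$, locality of $\End P_i$ splits the analysis: if $\alpha$ is an isomorphism, then $\sigma$ splits and we reach an immediate contradiction; if $\alpha \in \rad \End P_i$, then Nakayama forces $\beta \colon E_0 \to P_i$ to be surjective, and one iterates on a smaller extension extracted from $E_0$. Cleanly closing this induction to force the final contradiction is the technical heart of the argument and the main obstacle.
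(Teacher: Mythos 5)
The paper does not prove this result; it cites Auslander--Smal\o, so there is no in-paper proof to compare against. Your argument for (2) is correct. The real gap is in (3), which you acknowledge, and the same missing lemma is what is needed to make the ``mixing argument'' in (1) precise. The specific step that fails in (3) is ``Nakayama forces $\beta \colon E_0 \to P_i$ to be surjective'': from $\alpha \in \rad\End_\Lambda(P_i)$ one only gets that $\alpha$ is not surjective, hence $\im\alpha \subsetneq P_i$; one does \emph{not} get $\im\alpha \subseteq \rad P_i$, so Nakayama's lemma does not apply to $\im\alpha + \im\beta = P_i$, and when $P_i$ has non-simple top a proper submodule $\im\beta$ can indeed complement $\im\alpha$. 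Even granting surjectivity of $\beta$, the ``smaller extension'' $0 \to \ker\beta \to E_0 \to P_i \to 0$ has $\ker\beta$ not a direct summand of $X$ and hence not obviously in $\CC$, so the proposed induction has no invariant to descend on.

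The standard way to close both (1) and (3) is nilpotence of $\rad\End_\Lambda(P_i)$. For (3): write $E \cong P_i^b \oplus E'$ with $P_i$ not a summand of $E'$, so $g = (\alpha_1,\dots,\alpha_b,\beta) \colon E \to P_i$. If some $\alpha_j$ is an isomorphism, $g$ splits, contradiction. Otherwise every $\alpha_j \in \rad\End_\Lambda(P_i)$, a nilpotent ideal of the local artinian ring $\End_\Lambda(P_i)$, say $\rad^m = 0$. Given $C \in \CC$ and a surjection $\phi \colon P_i^a \oplus R \defl C$ with $R \in \add(P/P_i)$, repeatedly precompose the $P_i$-block with $g$. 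After $m$ iterations every component $P_i \to C$ of the resulting surjection is a composite of $\phi$ with a length-$m$ product of the $\alpha_j$, hence zero; discarding those summands leaves a surjection from $\add\bigl(E' \oplus (P/P_i)\bigr)$ onto $C$, so $E' \oplus (P/P_i)$ is a cover. Minimality of $P$ then gives $P_i \in \add\bigl(E' \oplus (P/P_i)\bigr)$, which is impossible. The identical precomposition argument, applied to a surjection $P'' \defl P_i$ that factors through $Q'$ with $P_i$ not a summand of $Q'$, is exactly what shows $P_i$ is removable in your proof of (1), i.e.\ that a cover with no removable summand satisfies $\add P^* \subseteq \add Q$ for every cover $Q$.
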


\section{Projectively Wakamatsu tilting modules}
\subsection{Basic properties}

For a Wakamatsu tilting module $T$, we have two resolving subcategories $\XX_T \subseteq {}^\perp T$ and two coresolving subcategories $\YY_T \subseteq T^\perp$.   One of the main motivations of this paper is to investigate the conditions under which these subcategories are equal.
To make this precise, let us define the following class of modules, which is the main topic of this paper.
\begin{definition}
  A $\Lambda$-module $T$ is \emph{projectively Wakamatsu tilting} if $T$ is an $\Ext$-progenerator of $T^\perp$. Dually, $T$ is \emph{injectively Wakamatsu tilting} if $T$ is an $\Ext$-injective cogenerator of $^\perp T$.
\end{definition}
This simple definition rephrases Wakamatsu tilting modules which we are interested in:
\begin{proposition}\label{prop:wak-proj-char}
  The following conditions are equivalent for $T \in \mod\Lambda$:
  \begin{enumerate}
    \item $T$ is projectively Wakamatsu tilting.
    \item $T$ is self-orthogonal and satisfies $\YY_T = T^\perp$.
  \end{enumerate}
  In this case, $T$ is Wakamatsu tilting.
\end{proposition}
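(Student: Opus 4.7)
The plan is to exploit Proposition \ref{prop:4cats-projinj}(4), which already tells us that for any self-orthogonal $T$ the subcategory $\YY_T$ has $T$ as an $\Ext$-progenerator, together with the tautological inclusion $\YY_T \subseteq T^\perp$ built into the definition of $\YY_T$. Thus the whole equivalence really is just bookkeeping about which of the two categories $T$ serves as an $\Ext$-progenerator for.

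For the implication (1) $\Rightarrow$ (2), I would first note that the hypothesis in the definition of projectively Wakamatsu tilting forces $T \in T^\perp$, hence $\Ext^{>0}_\Lambda(T,T)=0$, i.e.\ $T$ is self-orthogonal (this handles the case where one does not want to presuppose self-orthogonality in the definition). Then, given $X \in T^\perp$, using that $\add T = \PP(T^\perp)$ and that $T^\perp$ has enough $\Ext$-projectives, I would iteratively build a $T$-resolution
\[
  \begin{tikzcd}
    \cdots \rar & T_2 \rar & T_1 \rar & T_0 \rar & X \rar & 0
  \end{tikzcd}
\]
with $T_i \in \add T$ and successive syzygies in $T^\perp$ (the syzygies lie in $T^\perp$ because $T^\perp$ is coresolving, hence closed under kernels of surjections between its objects once one knows $T \in \PP(T^\perp)$). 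This shows $X \in \YY_T$, giving $T^\perp \subseteq \YY_T$; the reverse inclusion is immediate from the definition of $\YY_T$.

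For (2) $\Rightarrow$ (1), the argument is even shorter: Proposition \ref{prop:4cats-projinj}(4) says $T$ is an $\Ext$-progenerator of $\YY_T$, and under the hypothesis $\YY_T = T^\perp$ this is the same as saying $T$ is an $\Ext$-progenerator of $T^\perp$.

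Finally, for the concluding assertion that such a $T$ is Wakamatsu tilting, I would verify $D\Lambda \in \YY_T$ (which characterizes Wakamatsu tiltings by Proposition \ref{prop:wak-dual}); but $D\Lambda$ is injective, so $\Ext^{>0}_\Lambda(T, D\Lambda)=0$, giving $D\Lambda \in T^\perp = \YY_T$. There is no real obstacle here — the only subtle point is checking that the iteratively constructed syzygies in the (1) $\Rightarrow$ (2) step remain in $T^\perp$, but that is automatic from the coresolving property.
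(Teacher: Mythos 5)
Your proof follows essentially the same route as the paper: deduce self-orthogonality from $T \in T^\perp$, build a $T$-resolution of any $X \in T^\perp$ using the $\Ext$-progenerator hypothesis to get $T^\perp \subseteq \YY_T$, invoke Proposition~\ref{prop:4cats-projinj}(4) for the reverse direction, and obtain the Wakamatsu tilting conclusion from $D\Lambda \in T^\perp = \YY_T$ via Proposition~\ref{prop:wak-dual}. The argument is correct in substance, but you repeatedly assert a false justification for one step that should be corrected.

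You claim (twice, and flag it as ``the only subtle point'') that the successive syzygies in the constructed $T$-resolution lie in $T^\perp$ ``because $T^\perp$ is coresolving, hence closed under kernels of surjections between its objects once one knows $T \in \PP(T^\perp)$.'' This is not true. A coresolving subcategory is by definition closed under cokernels of injections, not under kernels of surjections, and knowing $T \in \PP(T^\perp)$ does not change that. Concretely, for a surjection $g \colon T_0 \defl X$ with $T_0 \in \add T$ and $X \in T^\perp$, applying $\Hom_\Lambda(T,-)$ shows that $\Ext_\Lambda^{\ge 2}(T, \ker g)$ vanishes for free, but $\Ext^1_\Lambda(T, \ker g) = 0$ holds precisely when $g$ is a right $T$-approximation; an arbitrary surjection from $\add T$ does not give a syzygy in $T^\perp$. (For instance with a classical tilting module $T$ of projective dimension one, $T^\perp = \Fac T$ is a torsion class, and kernels of surjections between its objects readily leave it.) The correct and sufficient reason is already built into the hypothesis you are using: ``$T$ is an $\Ext$-progenerator of $T^\perp$'' means, by the paper's definition of \emph{enough $\Ext$-projectives}, that for each $X \in T^\perp$ there exists a short exact sequence $0 \to X' \to T_0 \to X \to 0$ with $T_0 \in \add T$ \emph{and} $X' \in T^\perp$. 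The membership of the syzygy in $T^\perp$ is part of the definition, not a consequence of coresolvingness. With that correction the argument is fine and matches the paper's proof.
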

\begin{proof}
  (1) $\Rightarrow$ (2):
  Let $T$ be projectively Wakamatsu tilting. Since $T$ is an $\Ext$-progenerator of $T^\perp$, it follows that $T \in T^\perp$, so $T$ is self-orthogonal. Using the projective resolution in $T^\perp$, we obtain $T^\perp \subseteq \YY_T$, and thus $T^\perp = \YY_T$. Additionally, $D\Lambda \in T^\perp$ since $D\Lambda$ is injective, so we have $D\Lambda \in \YY_T$, which by Proposition \ref{prop:wak-dual} implies that $T$ is Wakamatsu tilting.

  (2) $\Rightarrow$ (1):
  This implication follows directly from Proposition \ref{prop:4cats-projinj} (4).
\end{proof}

\begin{example}\label{ex:tilt-is-wak-proj}
  Let $T$ be a tilting module. As previously mentioned in Example \ref{ex:tilt-wak-proj}, we have the equality $T^\perp = \YY_T$. Therefore, every tilting module is projectively Wakamatsu tilting. Dually, every cotilting module is injectively Wakamatsu tilting.
\end{example}

\begin{example}\label{ex:weakly-gor}
  Clearly, $\Lambda_\Lambda$ itself is Wakamatsu tilting. Therefore, $\XX_\Lambda = \GP \Lambda$ is a resolving subcategory of $\mod\Lambda$, which is Frobenius with $\PP(\GP\Lambda) = \II(\GP \Lambda) = \proj\Lambda$.
  According to \cite{RZ}, $\Lambda$ is called \emph{weakly Gorenstein} if $\GP \Lambda = {}^\perp \Lambda$. Therefore, $\Lambda$ is weakly Gorenstein if and only if $\Lambda_\Lambda$ is injectively Wakamatsu tilting.
  Many classes of algebras are weakly Gorenstein (see \cite{ma2, RZ}), but there are also examples of algebras which are not weakly Gorenstein \cite{JS, ma1}.
  Therefore, such an algebra yields an example of a Wakamatsu tilting module which is not injectively Wakamatsu tilting.
\end{example}

A similar yet subtly different generalization of tilting modules was considered by Auslander and Reiten \cite[p. 144]{applications}.
\begin{definition}
  A $\Lambda$-module $T$ is an \emph{Auslander-Reiten tilting module}, abbreviated as \emph{AR tilting module}, if $\PP(T^\perp) = \add T$ and $T^\perp$ is covariantly finite.
\end{definition}
We omit the details on the dual notion, AR cotilting modules. The relationship between tilting modules, AR tilting modules, and projectively Wakamatsu tilting modules is summarized as follows.
\begin{proposition}\label{prop:tilt-rels}
  For $T \in \mod\Lambda$, the following hold:
  \begin{enumerate}
    \item If $T$ is tilting, then $T$ is AR tilting.
    \item If $T$ is AR tilting, then $T$ is projectively Wakamatsu tilting.
    \item Suppose that $\pd T_\Lambda$ is finite. Then $T$ is tilting if and only if AR tilting if and only if projectively Wakamatsu tilting.
  \end{enumerate}
\end{proposition}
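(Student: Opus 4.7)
The plan is to handle the three parts in order, treating each tilting implication separately.

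For (1), once $T$ is tilting, Example \ref{ex:tilt-wak-proj} gives $T^\perp = \YY_T$, and then Proposition \ref{prop:4cats-projinj}(4) immediately yields $\PP(T^\perp) = \PP(\YY_T) = \add T$. The remaining condition, covariant finiteness of $T^\perp$ for tilting $T$, is a classical theorem in tilting theory (Auslander--Reiten, \cite{applications}), which I would invoke.

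For (2), assume $T$ is AR tilting. I would first use the covariant finiteness of $T^\perp$ together with Proposition \ref{prop:cover-fact}(2) to obtain a minimal cover of $T^\perp$; by Proposition \ref{prop:cover-fact}(3) this cover is Ext-projective, hence lies in $\PP(T^\perp) = \add T$. This furnishes, for any $X \in T^\perp$, a surjection from some module in $\add T$. Separately, $\add T$ is always contravariantly finite in $\mod\Lambda$ via the trace map $T^{(\Hom(T,X))}\to X$, producing a right $\add T$-approximation of $X$. Taking the direct sum yields a surjection $T_0 \defl X$ with $T_0 \in \add T$ that is simultaneously a right $\add T$-approximation. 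A standard long exact sequence argument for $0 \to K \to T_0 \to X \to 0$, using self-orthogonality of $T$, $X \in T^\perp$, and the surjectivity of $\Hom(T, T_0) \to \Hom(T, X)$ coming from the approximation property, forces $K \in T^\perp$. Hence $T$ is an Ext-progenerator of $T^\perp$.

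The substantive part is (3): projectively Wakamatsu tilting plus $\pd T_\Lambda = n < \infty$ implies tilting. By Proposition \ref{prop:wak-proj-char}, $T$ is Wakamatsu tilting, so $\Lambda \in \XX_T$ admits a (possibly infinite) $T$-coresolution with cokernels $K^i \in {}^\perp T$ (set $K^{-1}:=\Lambda$). I would truncate at step $n-1$ to obtain an exact sequence
\[
0 \to \Lambda \to T^0 \to \cdots \to T^{n-1} \to K^{n-1} \to 0,
\]
and then prove $K^{n-1} \in \add T$, which yields the tilting coresolution on setting $T^n := K^{n-1}$. First, dimension-shifting $\Hom(T, -)$ through the short exact sequences $0 \to K^{i-1} \to T^i \to K^i \to 0$ and using $\pd T_\Lambda = n$ gives $\Ext^{>0}_\Lambda(T, K^{n-1}) \cong \Ext^{>n}_\Lambda(T, \Lambda) = 0$, so $K^{n-1} \in T^\perp$. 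Second, splicing finite projective resolutions of each $T^i$ with the truncated coresolution shows $\pd K^{n-1} < \infty$. Third, invoking the projectively Wakamatsu tilting hypothesis $T^\perp = \YY_T$, every $X \in T^\perp$ admits a $T$-resolution $\cdots \to T_1 \to T_0 \to X \to 0$ with syzygies $X_k \in T^\perp$; dimension-shifting $\Hom(K^{n-1}, -)$ through it (using $K^{n-1} \in {}^\perp T$ to kill Ext into each $T_k \in \add T$) gives $\Ext^1_\Lambda(K^{n-1}, X) \cong \Ext^{k+1}_\Lambda(K^{n-1}, X_k)$ for every $k$, which vanishes once $k > \pd K^{n-1}$. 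Therefore $K^{n-1} \in \PP(T^\perp) = \add T$, completing the proof.

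The principal obstacle is the last step of (3): upgrading $K^{n-1} \in {}^\perp T$ (which only controls Ext into $T$ itself) to $K^{n-1} \in \PP(T^\perp)$ (which controls Ext into all of $T^\perp$). This is exactly the point where the projectively Wakamatsu tilting hypothesis $T^\perp = \YY_T$ is used essentially, since it provides $T$-resolutions for arbitrary objects of $T^\perp$; it is complemented by the finiteness of $\pd K^{n-1}$ extracted from $\pd T < \infty$, which is what makes the iterated dimension-shift terminate.
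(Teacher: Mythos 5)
Your proof is correct, but it takes a more constructive route than the paper in parts (2) and (3), so the comparison is worth recording. For (1) both you and the paper cite the Auslander--Reiten result. For (2) the paper simply invokes the general fact (Mohamed, \cite{Moh}) that a covariantly finite extension-closed subcategory of $\mod\Lambda$ has enough $\Ext$-projectives; you instead build the required short exact sequence $0 \to K \to T_0 \to X \to 0$ by hand from the minimal cover and the trace approximation, then run the long exact sequence to see $K \in T^\perp$. This is more elementary and spells out the mechanism. (A small remark: once the cover gives a surjection from $\add T$ onto $X$, one already has $\tr_T(X) = X$, so the trace map itself is surjective and the direct-sum trick is not strictly needed.) The real divergence is in (3). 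The paper's proof is top-down: it uses the cosyzygy $\Sigma^d(X)$ to show $T^\perp$ is covariantly finite, invokes Auslander--Buchweitz theory, and then appeals to the Auslander--Reiten classification \cite[Theorem 5.5]{applications} to identify $T^\perp = T'^\perp$ for a tilting $T'$. Your proof is bottom-up: you truncate the Wakamatsu $T$-coresolution of $\Lambda$ at length $n = \pd T$, show the cokernel $K^{n-1}$ lies in $T^\perp$ by one dimension shift (using $\pd T = n$), has finite projective dimension, and then -- crucially using $T^\perp = \YY_T$ to provide $T$-resolutions of arbitrary $X \in T^\perp$ -- show $\Ext^1(K^{n-1}, X) = 0$ by a second dimension shift that terminates because $\pd K^{n-1} < \infty$. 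Hence $K^{n-1} \in \PP(T^\perp) = \add T$, which directly exhibits the finite coresolution required for $T$ to be tilting. Your argument avoids Auslander--Buchweitz theory and the classification theorem entirely and makes explicit exactly where the hypothesis ``projectively Wakamatsu tilting'' (as opposed to merely Wakamatsu tilting) enters; the paper's version is shorter and, as a byproduct, proves covariant finiteness of $T^\perp$ directly, hence AR tilting, along the way.
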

\begin{proof}
  (1)
  This is proven in \cite[Theorem 5.5]{applications}.

  (2) In general, it is known that a covariantly finite extension-closed subcategory of $\mod\Lambda$ has enough $\Ext$-projectives (see e.g. \cite[Proposition 1.1]{Moh}). Therefore, if $T$ is AR tilting, then $T^\perp$ has enough $\Ext$-projectives with $\PP(T^\perp) = \add T$, which shows that $T$ is projectively Wakamatsu tilting.

  (3)
  It is enough to show that if $\pd T_\Lambda$ is finite and $T$ is projectively Wakamatsu tilting, then it is tilting.
  Let $d := \pd T_\Lambda$. We will first show that $T^\perp$ is covariantly finite.
  For any $X \in \mod\Lambda$, consider the injective resolution of $X$ and the $d$-th cosyzygy $\Sigma^d(X)$:
  \[
    \begin{tikzcd}
      0 \rar & X \rar & I^0 \rar & I^1 \rar & \cdots \rar & I^{d-1} \rar & \Sigma^d(X) \rar & 0
    \end{tikzcd}
  \]
  with $I^i \in \inj\Lambda$ for all $i$. Then we have $\Ext_\Lambda^{>d}(T, X) = \Ext_\Lambda^{>0}(T, \Sigma^d(X)) = 0$ by $\pd T_\Lambda = d$. Therefore, we obtain $\Sigma^d(X) \in T^\perp$, so the above exact sequence gives a finite coresolution of $X$ by $T^\perp$.
  Since $T^\perp$ is coresolving and has an $\Ext$-progenerator $T$, the Auslander-Buchweitz theory implies that $T^\perp$ is covariantly finite \cite[Theorem 2.3]{AB}.
  Then, the result by Auslander-Reiten \cite[Theorem 5.5]{applications} shows that $T^\perp$ should coincide with $T'^\perp$ for some tilting module $T'$. Then we have $\add T = \PP(T^\perp) = \PP(T'^\perp) = \add T'$, so $T$ is a tilting module.
\end{proof}
We remark that whether a Wakamatsu tilting module of finite projective dimension is tilting is a famous open problem called the \emph{Wakamatsu Tilting conjecture}.

To summarize, we have the following hierarchy.
\[
  \begin{tikzcd}[sep = tiny]
    \{ \text{tilting} \} \rar[symbol=\subseteq] &
    \{ \text{AR tilting} \} \rar[symbol=\subseteq] &
    \{ \text{projectively Wakamatsu tilting} \} \ar[rd, symbol=\subseteq]\\
    & & & \{ \text{Wakamatsu tilting} \} \\
    \{ \text{cotilting} \} \rar[symbol=\subseteq] &
    \{ \text{AR cotilting} \} \rar[symbol=\subseteq] &
    \{ \text{injectively Wakamatsu tilting} \} \ar[ru, symbol=\subseteq]
  \end{tikzcd}
\]

The inclusion $\{ \text{projectively Wakamatsu tilting} \} \subseteq \{ \text{Wakamatsu tilting} \}$ can be strict, as seen in Example \ref{ex:weakly-gor}.
The author is not aware of any other examples of Wakamatsu tilting modules which are not projectively Wakamatsu tilting (or injectively Wakamatsu tilting) except for this example coming from Gorenstein homological algebra, thus leading to the following question:
\begin{question}
  Are there systematic ways to construct Wakamatsu tilting modules which are not projectively Wakamatsu tilting modules?
\end{question}
Also, the author does not know whether AR tilting modules and projectively Wakamatsu tilting modules coincide. Specifically:
\begin{question}
  Are there any projectively Wakamatsu tilting modules which are not AR tilting modules? In other words, if $T$ is an $\Ext$-progenerator of $T^\perp$, does it follow that $T^\perp$ is covariantly finite?
\end{question}

As we will see later in Theorem \ref{thm:main2} and Remark \ref{rem:ar}, if $\Lambda$ is representation-finite, then the classes of AR tilting, projectively Wakamatsu tilting, and Wakamatsu tilting modules all coincide, so all modules in the above figure except for tilting and cotilting modules are the same.
Furthermore, Corollary \ref{cor:IG} states that if $\Lambda$ is a representation-finite Iwanaga-Gorenstein algebra, then every Wakamatsu tilting module is tilting, so all modules in the above figure coincide.

\subsection{Bongartz completion of a self-orthogonal module}
In this subsection, we first consider whether a given self-orthogonal module can be completed into a projectively Wakamatsu tilting module, and then we present our main result.
The notion of Bongartz completion is the basic tool we use to address this question.
\begin{definition}
  Let $U$ be a self-orthogonal module. A \emph{Bongartz completion of $U$} is a projectively Wakamatsu tilting module $T$ satisfying $U^\perp = T^\perp (= \YY_T)$.
\end{definition}
Since $U \in \PP(U^\perp)$ for a self-orthogonal module $U$, it easily follows that $U \in \add T$ if $T$ is a Bongartz completion of $U$.
The definition of Bongartz completion requires $T$ to be projectively Wakamatsu tilting, not merely Wakamatsu tilting.
In addition, if a Bongartz completion of $U$ exists, it is unique up to direct summands since $\add T = \PP(T^\perp) = \PP(U^\perp)$ holds.

This notion of Bongartz completion for projectively Wakamatsu tilting modules is compatible with the usual Bongartz completion for tilting modules, as the following proposition shows.
\begin{proposition}\label{prop:bon-compati}
  Let $U$ be a self-orthogonal module such that $\pd U_\Lambda$ is finite. Suppose that $T$ is a Bongartz completion of $U$ in the sense described above. Then $T$ is a tilting module.
\end{proposition}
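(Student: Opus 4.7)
The plan is to reduce the claim, via Proposition \ref{prop:tilt-rels}(3), to showing that $\pd T_\Lambda$ is finite. Since $T$ is already projectively Wakamatsu tilting by hypothesis, that proposition will then immediately upgrade $T$ to a tilting module. So the entire task becomes bounding $\pd T_\Lambda$ in terms of $\pd U_\Lambda$.

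Set $d := \pd U_\Lambda < \infty$. The key observation is the hypothesis $T^\perp = U^\perp$: a module lies in $T^\perp$ if and only if it lies in $U^\perp$. I will exploit this to transport the finiteness of $\pd U_\Lambda$ to $T$. Given any $X \in \mod\Lambda$, take an injective coresolution
\[
  \begin{tikzcd}
    0 \rar & X \rar & I^0 \rar & \cdots \rar & I^{d-1} \rar & \Sigma^d(X) \rar & 0
  \end{tikzcd}
\]
with $I^i \in \inj\Lambda$. Dimension shifting together with $\pd U_\Lambda = d$ gives
\[
  \Ext^{>0}_\Lambda(U, \Sigma^d(X)) \iso \Ext^{>d}_\Lambda(U, X) = 0,
\]
so $\Sigma^d(X) \in U^\perp$. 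By the assumption $T^\perp = U^\perp$, we then have $\Sigma^d(X) \in T^\perp$, and dimension shifting backwards gives $\Ext^{>d}_\Lambda(T, X) = \Ext^{>0}_\Lambda(T, \Sigma^d(X)) = 0$. Since $X$ was arbitrary, this yields $\pd T_\Lambda \leq d < \infty$.

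Having established $\pd T_\Lambda < \infty$ and knowing $T$ is projectively Wakamatsu tilting, Proposition \ref{prop:tilt-rels}(3) concludes that $T$ is tilting. I do not anticipate any real obstacle: the only subtle point is making sure dimension shifting is applied correctly in both directions, which is straightforward because $\inj\Lambda \subseteq T^\perp \cap U^\perp$ automatically (injectives kill all higher $\Ext$). No appeal to Bongartz's original argument for tilting modules is needed — the equality of perpendicular categories does all the work.
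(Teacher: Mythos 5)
Your proposal is correct and is essentially the same argument as the paper's: reduce to bounding $\pd T_\Lambda$ via Proposition \ref{prop:tilt-rels}(3), then for arbitrary $X$ dimension-shift along the $d$-th cosyzygy to deduce $\Sigma^d(X) \in U^\perp = T^\perp$, giving $\pd T_\Lambda \leq d$. The paper's version has a small typo at the end ($\pd X_\Lambda$ instead of $\pd T_\Lambda$), which your write-up correctly avoids.
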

\begin{proof}
  By Proposition \ref{prop:tilt-rels} (3), it suffices to show that $\pd T_\Lambda$ is finite. Let $d := \pd U_\Lambda$ and take any $X \in \mod\Lambda$. Then, for the $d$-th cosyzygy $\Sigma^d(X)$ of $X$, we have $\Ext_\Lambda^{>0}(U, \Sigma^d(X)) = \Ext_\Lambda^{>d}(U, X) = 0$, so $\Sigma^d(X) \in U^\perp$.
  Therefore, we obtain $\Sigma^d(X) \in T^\perp$ from $U^\perp = T^\perp$.
  Then, $\Ext_\Lambda^{d+1}(T, X) = \Ext_\Lambda^1(T, \Sigma^d(X)) = 0$ holds. Therefore, we conclude that $\pd X_\Lambda \leq d$.
\end{proof}
\begin{remark}
  It is well-known that if $\pd U_\Lambda \leq 1$ then $U$ has a Bongartz completion. However, for a general tilting module, there is an example of a self-orthogonal module $U$ of finite projective dimension such that a Bongartz completion of $U$ does not exist (see Example \ref{ex:mso-not-wak}).
\end{remark}

The following theorem answers the natural question of when a self-orthogonal module admits a Bongartz completion.
\begin{theorem}\label{thm:main}
  For a self-orthogonal module $U$, the following conditions are equivalent:
  \begin{enumerate}
    \item $U$ has a Bongartz completion.
    \item $U^\perp$ has a finite cover.
  \end{enumerate}
  Moreover, if $U^\perp$ has a minimal cover $M$, then $U \oplus M$ is a Bongartz completion of $U$.
\end{theorem}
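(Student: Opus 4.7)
The plan is to prove the two implications separately, with almost all of the work concentrated in the moreover part.

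For $(1) \Rightarrow (2)$: if $T$ is a Bongartz completion of $U$, then $T$ is an $\Ext$-progenerator of $T^\perp = U^\perp$, so for each $X \in U^\perp$ the defining short exact sequence yields a surjection $T_0 \defl X$ with $T_0 \in \add T$. Thus $T$ itself is a (finite) cover of $U^\perp$.

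For $(2) \Rightarrow (1)$ together with the moreover statement, I apply Proposition~\ref{prop:cover-fact}~(1) and (3) to obtain a minimal cover $M$ of $U^\perp$ lying in $\PP(U^\perp)$, and set $T := U \oplus M$. My goal is to verify: (a) $T$ is self-orthogonal; (b) $T^\perp = U^\perp$; (c) $T$ is an $\Ext$-progenerator of $T^\perp$. Once these hold, $T$ is a Bongartz completion of $U$ by definition.

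Parts (a) and (b) are routine applications of the fact (from the lemma preceding Lemma~\ref{lem:wak-duality}) that $\Ext^{>0}_\Lambda(\PP(\EE), \EE) = 0$ for a coresolving subcategory $\EE$. Indeed, $U^\perp$ is coresolving by Proposition~\ref{prop:4cats-projinj}~(3), so $M \in \PP(U^\perp)$ forces $\Ext^{>0}_\Lambda(M, U^\perp) = 0$. Combined with $\Ext^{>0}_\Lambda(U, U) = 0$ (self-orthogonality of $U$) and $\Ext^{>0}_\Lambda(U, M) = 0$ (from $M \in U^\perp$), this gives (a). Applying the same vanishings to any $X \in U^\perp$ yields $\Ext^{>0}_\Lambda(T, X) = 0$, hence $U^\perp \subseteq T^\perp$; the reverse inclusion is automatic from $U \in \add T$.

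The main obstacle is (c): for each $X \in U^\perp$, I must produce a short exact sequence
\[
  \begin{tikzcd}
    0 \rar & X' \rar & T_0 \rar & X \rar & 0
  \end{tikzcd}
\]
with $T_0 \in \add T$ and $X' \in U^\perp$. Merely picking a surjection $M_0 \defl X$ from the cover property is \emph{insufficient}: the tail of the long exact sequence of $\Hom_\Lambda(U, -)$ reads
\[
  \Hom_\Lambda(U, M_0) \to \Hom_\Lambda(U, X) \to \Ext^1_\Lambda(U, X') \to 0
\]
(the last $0$ since $M_0 \in U^\perp$), and the leftmost map need not be surjective. The remedy is to enlarge $T_0$ by hand: choose a finite generating set $f_1, \ldots, f_n$ of $\Hom_\Lambda(U, X)$ over $\End_\Lambda(U)$ (finite because $\Hom_\Lambda(U, X)$ is a finitely generated $R$-module and $R$ is artinian) and set $T_0 := M_0 \oplus U^n$ with structure map to $X$ assembled from the given surjection on $M_0$ and from $(f_1, \ldots, f_n)$ on $U^n$. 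Then $T_0 \in \add T$, the map $T_0 \defl X$ is still surjective, and by construction $\Hom_\Lambda(U, T_0) \defl \Hom_\Lambda(U, X)$ is surjective. A standard dimension shift in the long exact sequence of $\Hom_\Lambda(U, -)$, using $T_0, X \in U^\perp$, then forces $\Ext^{>0}_\Lambda(U, X') = 0$, i.e.\ $X' \in U^\perp$, which finishes the proof.
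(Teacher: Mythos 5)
Your proof is correct and follows essentially the same route as the paper's. The only cosmetic difference is that where you build the extra summand $U^n$ by hand from a finite $\End_\Lambda(U)$-generating set of $\Hom_\Lambda(U,X)$, the paper simply takes a right $U$-approximation $f \colon U_X \to X$ and forms the combined map $[f,g] \colon U_X \oplus M' \defl X$ (with $g$ the surjection from the cover) --- these are the same construction, with the approximation language packaging the generator argument.
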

\begin{proof}
  (1) $\Rightarrow$ (2):
  Let $T$ be a Bongartz completion of $U$, so $U^\perp = T^\perp$.
  Since $T$ is projectively Wakamatsu tilting, $T^\perp$ has an $\Ext$-progenerator $T$. In particular, $T$ is a cover of $T^\perp$, so $T^\perp = U^\perp$ has a finite cover.

  (2) $\Rightarrow$ (1):
  The proof of \cite[Theorem 1]{RS} serves as the main inspiration for the following argument. Let $M$ be a cover of $U^\perp$. By Proposition \ref{prop:cover-fact}, we may assume that $M$ is a minimal cover, and in particular, $M \in \PP(U^\perp)$. We will show that $U \oplus M$ is a Bongartz completion of $U$.

  First, we show that $U \oplus M$ is an $\Ext$-progenerator of $U^\perp$. Let $X \in U^\perp$. Take a right $U$-approximation $f \colon U_X \to X$. In addition, there exists a surjection $g \colon M' \defl X$ with $M' \in \add M$ since $M$ is a cover of $U^\perp$.
  Then consider the following exact sequence in $\mod\Lambda$:
  \begin{equation}\label{eq:proj-resol}
    \begin{tikzcd}
      0 \rar & X' \rar & U_X \oplus M' \rar["{[f, g]}"] & X \rar & 0.
    \end{tikzcd}
  \end{equation}
  By applying $\Hom_\Lambda(U, -)$, it is straightforward to check that $X' \in U^\perp$ since $g$ is a right $U$-approximation and $U_X \oplus M', X \in U^\perp$.
  Therefore, all the terms in the above sequence belong to $U^\perp$. Moreover, since $U, M \in \PP(U^\perp)$, it follows that $U \oplus M$ is a $\Ext$-progenerator of $U^\perp$.

  Since $U^\perp$ is coresolving and $U \oplus M \in \PP(U^\perp)$, we have $\Ext_\Lambda^{>0}(U \oplus M, X) = 0$ for every $X \in U^\perp$, that is, $U^\perp \subseteq (U \oplus M)^\perp$. Hence, $U^\perp = (U \oplus M)^\perp$ because the reverse containment is trivial. Therefore, $U \oplus M$ is an $\Ext$-progenerator of $(U \oplus M)^\perp = U^\perp$, so $U \oplus M$ is projectively Wakamatsu tilting and a Bongartz completion of $U$.
\end{proof}
We can deduce the following result of tilting theory immediately.
\begin{corollary}
  Let $T$ be a self-orthogonal $\Lambda$-module of finite projective dimension. Then $T$ admits a Bongartz completion (as a tilting module) if and only if $T^\perp$ has a finite cover.
\end{corollary}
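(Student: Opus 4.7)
The plan is to read off this corollary from the two results already proved in this subsection, namely Theorem \ref{thm:main} (the characterization of self-orthogonal modules admitting a Bongartz completion in the projectively Wakamatsu tilting sense) and Proposition \ref{prop:bon-compati} (which says such a completion is automatically tilting when the starting module has finite projective dimension). The only thing to check is that ``Bongartz completion as a tilting module'' and ``Bongartz completion as a projectively Wakamatsu tilting module'' coincide under the finite-projective-dimension hypothesis.

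For the implication $(2) \Rightarrow (1)$, I would first invoke Theorem \ref{thm:main} to obtain a projectively Wakamatsu tilting module $T'$ with $T'^\perp = T^\perp$ (and $T \in \add T'$). Since $\pd T_\Lambda$ is finite by hypothesis, Proposition \ref{prop:bon-compati} applies verbatim and upgrades $T'$ to a tilting module. Thus $T'$ is a tilting Bongartz completion of $T$.

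For the converse $(1) \Rightarrow (2)$, suppose $T'$ is a tilting module with $T'^\perp = T^\perp$ and $T \in \add T'$. By Example \ref{ex:tilt-is-wak-proj} (or Proposition \ref{prop:tilt-rels}(1)--(2)), every tilting module is projectively Wakamatsu tilting, so $T'$ is in particular a Bongartz completion of $T$ in the sense of the preceding definition. The forward direction of Theorem \ref{thm:main} then immediately yields that $T^\perp = T'^\perp$ admits a finite cover, namely $T'$ itself (being an $\Ext$-progenerator of $T'^\perp$).

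There is no real obstacle here: the corollary is essentially a packaging result combining Theorem \ref{thm:main} with Proposition \ref{prop:bon-compati}. The only minor point to articulate carefully is that the two notions of ``Bongartz completion'' match under finite projective dimension, which is exactly what Proposition \ref{prop:bon-compati} and Example \ref{ex:tilt-is-wak-proj} together deliver.
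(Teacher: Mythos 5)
Your proposal matches the paper's own argument, which likewise deduces the corollary directly from Theorem \ref{thm:main} and Proposition \ref{prop:bon-compati}, noting that under the finite-projective-dimension hypothesis any Bongartz completion must be tilting. You have simply spelled out both implications more explicitly than the paper does.
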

\begin{proof}
  This follows directly from Theorem \ref{thm:main} and Proposition \ref{prop:bon-compati}, since for such a module $T$, a Bongartz completion must be tilting.
\end{proof}
It is worth noting that a similar statement is shown in \cite[Proposition 5.12]{applications}: such $T$ admits a Bongartz completion if and only if $T^\perp$ is covariantly finite. The above result is slightly stronger than this, since every covariantly finite subcategory has a finite cover.

We have the following consequence for the representation-finite case.
\begin{corollary}\label{cor:bon-for-rep-fin}
  Let $U$ be a self-orthogonal module such that $\#\ind (U^\perp)$ is finite (e.g. $\Lambda$ is representation-finite). Then $U$ has a Bongartz completion. In particular, there exists some $M \in \mod\Lambda$ such that $U \oplus M$ is a projectively Wakamatsu tilting module.
\end{corollary}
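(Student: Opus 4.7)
The plan is to reduce the claim directly to Theorem \ref{thm:main}, which asserts that $U$ admits a Bongartz completion if and only if $U^\perp$ has a finite cover. So the entire task is to exhibit a finite cover of $U^\perp$ from the hypothesis that $\#\ind(U^\perp) < \infty$.

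Concretely, I would let $M$ be the direct sum of one representative from each isomorphism class in $\ind(U^\perp)$; by hypothesis this is a finite direct sum, so $M \in \mod\Lambda$. Given an arbitrary $X \in U^\perp$, decompose $X$ into indecomposables as $X = X_1 \oplus \cdots \oplus X_n$. Since $U^\perp$ is closed under direct summands (Proposition \ref{prop:4cats-projinj}(3)), each $X_i$ belongs to $\ind(U^\perp)$, hence to $\add M$, and therefore $X \in \add M$. The identity morphism $X \to X$ is then a surjection from an object of $\add M$ onto $X$, which verifies that $M$ is a cover of $U^\perp$ in the sense of the definition preceding Proposition \ref{prop:cover-fact}.

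With (2) of Theorem \ref{thm:main} established, the theorem furnishes a Bongartz completion $T$ of $U$, which by definition is projectively Wakamatsu tilting with $U^\perp = T^\perp$. As observed right after the definition of Bongartz completion, $U \in \PP(U^\perp) = \PP(T^\perp) = \add T$, so $T$ has the form $U \oplus M'$ for some $M' \in \mod\Lambda$, giving the ``in particular'' statement. The representation-finite case is the special instance $\#\ind(\mod\Lambda) < \infty$, which trivially forces $\#\ind(U^\perp) < \infty$ for every self-orthogonal $U$. There is no serious obstacle in this argument since all the real work has been done in Theorem \ref{thm:main}; the only point to remember is to invoke closure of $U^\perp$ under summands so that the indecomposable summands of arbitrary $X \in U^\perp$ are accounted for by $\ind(U^\perp)$.
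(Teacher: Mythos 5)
Your argument is correct and follows the same route as the paper: the paper simply notes that $\#\ind(U^\perp) < \infty$ implies $U^\perp$ has a finite cover, and then invokes Theorem \ref{thm:main}. You have filled in the (easy) details of constructing the cover, using closure under direct summands, exactly as intended.
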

\begin{proof}
  This is an immediate consequence of Theorem \ref{thm:main}, since if $\#\ind (U^\perp)$ is finite, then $U^\perp$ has a finite cover.
\end{proof}

Also, Theorem \ref{thm:main} gives a criterion for determining when a self-orthogonal module is projectively Wakamatsu tilting.
\begin{corollary}\label{cor:wak-proj-char2}
  For a self-orthogonal module $T$, the following conditions are equivalent.
  \begin{enumerate}
    \item $T$ is projectively Wakamatsu tilting.
    \item $T^\perp = \YY_T$ holds.
    \item $T$ is a cover of $T^\perp$.
  \end{enumerate}
\end{corollary}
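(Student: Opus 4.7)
The equivalence (1) $\Leftrightarrow$ (2) is already Proposition \ref{prop:wak-proj-char}, so the plan is to add (3) to the picture by proving (1) $\Rightarrow$ (3) and (3) $\Rightarrow$ (1), leveraging Theorem \ref{thm:main} for the nontrivial direction.

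For (1) $\Rightarrow$ (3), I would simply unpack the definition of $\Ext$-progenerator: if $T$ is projectively Wakamatsu tilting, then for every $X \in T^\perp$ there exists a short exact sequence $0 \to X' \to T_0 \to X \to 0$ with $T_0 \in \add T$ and $X' \in T^\perp$. The surjection $T_0 \defl X$ witnesses that $T$ is a cover of $T^\perp$.

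For (3) $\Rightarrow$ (1), the key observation is that being a cover of $T^\perp$ is already enough to force the Bongartz machinery to apply to $U = T$ itself. Explicitly, since $T$ is a cover, $T^\perp$ has a finite cover, so by Proposition \ref{prop:cover-fact}(1) it has a minimal cover $M$, and by minimality $\add M \subseteq \add T$. Applying Theorem \ref{thm:main} with $U = T$, the module $T \oplus M$ is a Bongartz completion of $T$; in particular it is projectively Wakamatsu tilting, so $\add(T \oplus M) = \PP((T \oplus M)^\perp) = \PP(T^\perp)$ and $T \oplus M$ is an $\Ext$-progenerator of $(T \oplus M)^\perp = T^\perp$. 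Since $M \in \add T$, we have $\add(T \oplus M) = \add T$, hence $\add T = \PP(T^\perp)$ and $T$ inherits the property of being an $\Ext$-progenerator of $T^\perp$, i.e.\ $T$ is projectively Wakamatsu tilting.

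There is no real obstacle here: the heavy lifting was done in Theorem \ref{thm:main}, and the only subtle point is the bookkeeping that the minimal cover $M$ can be absorbed into $T$ via $\add M \subseteq \add T$, so that the Bongartz completion of $T$ is essentially $T$ itself. I would make sure to cite Proposition \ref{prop:wak-proj-char} for (1) $\Leftrightarrow$ (2) at the start to avoid redoing that argument, and present (1) $\Rightarrow$ (3) $\Rightarrow$ (1) as a short cycle.
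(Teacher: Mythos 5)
Your proof is correct and follows essentially the same route as the paper: cite Proposition \ref{prop:wak-proj-char} for (1) $\Leftrightarrow$ (2), observe that the progenerator property immediately gives a cover, and for (3) $\Rightarrow$ (1) combine the existence of a minimal cover $M$ with $\add M \subseteq \add T$ and the ``Moreover'' clause of Theorem \ref{thm:main} to see that the Bongartz completion $T \oplus M$ coincides with $T$ up to direct summands. The only cosmetic difference is that you prove (1) $\Rightarrow$ (3) directly while the paper proves (2) $\Rightarrow$ (3), but these are the same observation phrased via the progenerator definition versus the $T$-resolution defining $\YY_T$.
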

\begin{proof}
  (1) $\Leftrightarrow$ (2): This follows from Proposition \ref{prop:wak-proj-char}.

  (2) $\Rightarrow$ (3):
  This is obvious since $T$ is a cover of $\YY_T$.

  (3) $\Rightarrow$ (1):
  Since $T$ is a cover of $T^\perp$, the minimal cover $M$ of $T^\perp$ belongs to $\add T$. Consequently, the Bongartz completion of $T$ which we construct in the proof of Theorem \ref{thm:main} is $T \oplus M$, which is equivalent to $T$ up to direct summands. Therefore, $T$ is projectively Wakamatsu tilting.
\end{proof}
\begin{remark}
  We can prove this directly without using Theorem \ref{thm:main}: indeed, for any module $X \in T^\perp$, take a minimal right $T$-approximation $f$. Then, by Wakamatsu's lemma, $\ker f$ belongs to $T^\perp$. Moreover, $f$ is surjective since $T$ covers $T^\perp$. Thus, by repeating this process, we obtain $T^\perp \subseteq \YY_T$.
\end{remark}

By applying the dual of Corollary \ref{cor:wak-proj-char2} to $\Lambda_\Lambda$, we can deduce the following result on Gorenstein-projective modules, which was shown in \cite[Theorem 1.2]{RZ} by a different method.
\begin{corollary}
  The following conditions are equivalent:
  \begin{enumerate}
    \item $\Lambda$ is weakly Gorenstein, that is, $\XX_\Lambda = {}^\perp \Lambda$ holds.
    \item Every module $X$ in ${}^\perp \Lambda$ is torsionless, that is, there exists an injection $X \hookrightarrow P$ with some $P \in \proj\Lambda$.
  \end{enumerate}
\end{corollary}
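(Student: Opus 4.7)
The plan is to apply the dual of Corollary \ref{cor:wak-proj-char2} to the self-orthogonal module $T = \Lambda_\Lambda$ and unpack what each condition says in that special case. The dual statement asserts that for a self-orthogonal module $T$, the following are equivalent: $T$ is injectively Wakamatsu tilting; $\XX_T = {}^\perp T$; and $T$ is an \emph{envelope} of ${}^\perp T$ in the sense that every $X \in {}^\perp T$ admits an injection $X \hookrightarrow T'$ with $T' \in \add T$. The proof of this dual version is identical to that of Corollary \ref{cor:wak-proj-char2} after replacing the covariant set-up with the contravariant one (using Wakamatsu's lemma dually, and the dual of Theorem \ref{thm:main}).

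Now specialize to $T = \Lambda_\Lambda$, which is trivially self-orthogonal. First, $\add \Lambda = \proj\Lambda$, so the envelope condition is precisely that every $X \in {}^\perp \Lambda$ is torsionless, i.e.\ condition (2) of the corollary. Second, by Example \ref{ex:weakly-gor}, $\Lambda$ being weakly Gorenstein, $\XX_\Lambda = {}^\perp \Lambda$, is the same as $\Lambda_\Lambda$ being injectively Wakamatsu tilting. Hence the equivalence of (1) and (2) is a direct translation of the dual of Corollary \ref{cor:wak-proj-char2}.

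The only nontrivial step is really the direction (2) $\Rightarrow$ (1): given that every $X \in {}^\perp \Lambda$ embeds into a projective, one iterates this to produce, for every $X \in {}^\perp \Lambda$, an infinite coresolution by projectives with all cosyzygies again in ${}^\perp \Lambda$; combined with the existing projective resolution of $X$, this exhibits $X$ as Gorenstein-projective. This is the content of the dual of the Remark following Corollary \ref{cor:wak-proj-char2} (applying the dual of Wakamatsu's lemma to a minimal left $\proj\Lambda$-approximation). The direction (1) $\Rightarrow$ (2) is immediate, since every Gorenstein-projective module embeds into a projective by the very definition of $\XX_\Lambda$.

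The main (minor) obstacle is simply being careful that the dual of Corollary \ref{cor:wak-proj-char2} is stated and proved correctly; in particular, the phrase ``$T$ is a cover of $T^\perp$'' dualizes to the existence of injections into $\add T$, not to the envelope condition in the functorial sense. Once this is set up, the specialization $T = \Lambda_\Lambda$ is immediate.
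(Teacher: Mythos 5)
Your proposal is correct and follows essentially the same route as the paper: apply the dual of Corollary \ref{cor:wak-proj-char2} to $T = \Lambda_\Lambda$, noting that being a cocover of ${}^\perp\Lambda$ (i.e.\ every $X \in {}^\perp\Lambda$ embeds into a projective) is precisely the torsionless condition, while the dual of the projectively-Wakamatsu-tilting characterization gives $\XX_\Lambda = {}^\perp\Lambda$. Your extra commentary on the $(2)\Rightarrow(1)$ direction via iterated minimal left $\proj\Lambda$-approximations and the dual of Wakamatsu's lemma accurately mirrors the Remark after Corollary \ref{cor:wak-proj-char2} and does not change the substance.
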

\begin{proof}
  The dual of Corollary \ref{cor:wak-proj-char2} shows that $\Lambda$ is injectively Wakamatsu tilting if and only if $\Lambda$ is a cocover of $^\perp \Lambda$. Since the latter condition is precisely (2), we obtain the equivalence.
\end{proof}

Also, by applying Corollary \ref{cor:wak-proj-char2} to modules of finite projective dimension, we obtain the following characterization of tilting modules.
\begin{corollary}
  Let $T$ be a self-orthogonal module of finite projective dimension. Then the following conditions are equivalent:
  \begin{enumerate}
    \item $T$ is a tilting module.
    \item $T^\perp = \YY_T$ holds.
    \item $T$ is a cover of $T^\perp$.
  \end{enumerate}
\end{corollary}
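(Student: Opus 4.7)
The plan is to reduce this statement directly to two earlier results, so there should be essentially no obstacle beyond identifying that both reductions apply. First, I would invoke Corollary \ref{cor:wak-proj-char2}, which characterizes projectively Wakamatsu tilting modules among self-orthogonal modules by exactly the two conditions $T^\perp = \YY_T$ and ``$T$ is a cover of $T^\perp$''. This immediately gives the equivalences (2) $\Leftrightarrow$ (1$'$) $\Leftrightarrow$ (3), where (1$'$) is the auxiliary condition ``$T$ is projectively Wakamatsu tilting.''

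Next, I would close the loop by showing (1) $\Leftrightarrow$ (1$'$) under the finite projective dimension hypothesis. This is exactly the content of Proposition \ref{prop:tilt-rels}(3): whenever $\pd T_\Lambda < \infty$, the notions of tilting module, AR tilting module, and projectively Wakamatsu tilting module coincide. Since we are given $\pd T_\Lambda < \infty$ as part of the hypothesis, we conclude that $T$ is tilting iff $T$ is projectively Wakamatsu tilting.

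Combining these two observations yields (1) $\Leftrightarrow$ (2) $\Leftrightarrow$ (3). The only thing to verify is that Corollary \ref{cor:wak-proj-char2} really does not require the finite projective dimension hypothesis (it applies to any self-orthogonal module, so it is safe to use here), and that Proposition \ref{prop:tilt-rels}(3) does use finite projective dimension (so this is where that hypothesis is consumed). No obstacle is expected — the statement is designed as an immediate corollary, packaging the two preceding results in the tilting setting.
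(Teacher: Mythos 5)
Your proposal is correct and matches the paper's own proof, which is a one-line reduction citing Corollary \ref{cor:wak-proj-char2} and Proposition \ref{prop:tilt-rels}(3). You have simply spelled out the same two reductions — equating conditions (2) and (3) with ``projectively Wakamatsu tilting'' via Corollary \ref{cor:wak-proj-char2}, and then identifying that notion with ``tilting'' under the finite projective dimension hypothesis via Proposition \ref{prop:tilt-rels}(3).
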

\begin{proof}
  This follows from Corollary \ref{cor:wak-proj-char2} and Proposition \ref{prop:tilt-rels} (3).
\end{proof}

\subsection{Maximal self-orthogonal modules and Wakamatsu tilting modules}
In the classical tilting theory, the number of indecomposable direct summands of any tilting module is equal to $|\Lambda|$, so it is not possible to complete a tilting module into another tilting module in a non-trivial way.
It is reasonable to expect similar behavior for Wakamatsu tilting modules.
For instance, if we could apply Theorem \ref{thm:main} to a Wakamatsu tilting module $T$ which is not projectively Wakamatsu tilting, then we would obtain another projectively Wakamatsu tilting module $T'$ with $T \in \add T'$. We conjecture that such a thing is not possible.

This leads to the following notion, which plays a central role in the remainder of this paper.
\begin{definition}
  A $\Lambda$-module $T$ is \emph{maximal self-orthogonal} if it satisfies the following conditions.
  \begin{enumerate}
    \item $T$ is self-orthogonal.
    \item If $T \oplus M$ is self-orthogonal for $M \in \mod\Lambda$, then $M \in \add T$ holds.
  \end{enumerate}
\end{definition}
Therefore, the initial question of this subsection can be reformulated as follows: is every Wakamatsu tilting module maximal self-orthogonal?
Unfortunately, we cannot answer this question in general (Conjecture \ref{conj:msoc}). However, our main result Theorem \ref{thm:main2} provides a partial answer under a certain finiteness assumption.

First, we make the following observation about the number of indecomposable $\Ext$-projective and $\Ext$-injective objects of a subcategory of $\mod\Lambda$.
\begin{lemma}\label{lem:exact-cat}
  Let $\EE$ be a subcategory of $\mod\Lambda$ which is closed under extensions and direct summands. Suppose that $\#\ind\EE$ is finite and that $P$ and $I$ satisfy $\add P = \PP(\EE)$ and $\add I = \II(\EE)$. Then $|P| = |I|$.
\end{lemma}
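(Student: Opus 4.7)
The plan is to apply Auslander--Reiten theory internal to the exact category $\EE$. Since $\ind\EE$ is finite, setting $M := \bigoplus_{E \in \ind\EE} E$ gives $\EE = \add M$. Because $\Hom_\Lambda(X, M)$ and $\Hom_\Lambda(M, X)$ are finitely generated $R$-modules for every $X \in \mod\Lambda$, one can build minimal right and left $\add M$-approximations, so $\EE$ is functorially finite in $\mod\Lambda$. Combined with the assumption that $\EE$ is closed under extensions and direct summands, the theorem of Auslander--Smal\o\ on almost split sequences in subcategories then guarantees that $\EE$ admits almost split sequences; more precisely, every indecomposable $C \in \ind\EE \setminus \ind\PP(\EE)$ is the right-hand term of an almost split sequence
\[
  0 \to A \to B \to C \to 0
\]
in $\EE$, and dually, every indecomposable $A \in \ind\EE \setminus \ind\II(\EE)$ is the left-hand term of such a sequence.

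The second step is the standard bijection argument. The indecomposable end terms of an almost split sequence in $\EE$ are uniquely determined by each other: if $C$ is indecomposable and non-Ext-projective, the left term $A$ of its almost split sequence is indecomposable, and it is non-Ext-injective in $\EE$ precisely because any non-split epimorphism onto $C$ from an object in $\EE$ factors through $B$, yielding $\Ext^1_\EE(C', A) \neq 0$ whenever the sequence does not split. Symmetrically, if $A$ is indecomposable and non-Ext-injective, the right term $C$ of its almost split sequence is indecomposable and non-Ext-projective. Uniqueness of almost split sequences up to isomorphism makes these two assignments mutually inverse, giving a bijection
\[
  \ind\EE \setminus \ind\PP(\EE) \;\longleftrightarrow\; \ind\EE \setminus \ind\II(\EE).
\]

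Since $\ind\EE$ is finite, taking cardinalities of complements in $\ind\EE$ yields $\#\ind\PP(\EE) = \#\ind\II(\EE)$, which is exactly $|P| = |I|$.

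The main obstacle is invoking the existence of almost split sequences in $\EE$: once functorial finiteness (immediate from $\EE = \add M$ with $M$ finite) and the Auslander--Smal\o\ existence theorem for extension-closed functorially finite subcategories are in hand, the remainder is a routine AR-theoretic counting. Everything else, including the characterization of Ext-projectives and Ext-injectives as the indecomposables avoiding the respective ends of almost split sequences, is formal.
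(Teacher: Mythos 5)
Your proof is correct and follows the same strategy as the paper's: deduce functorial finiteness of $\EE$ from the finiteness of $\ind\EE$, invoke the Auslander--Smal\o\ existence theorem for almost split sequences in functorially finite extension-closed subcategories, and then count via the bijection between indecomposables that are not $\Ext$-projective and those that are not $\Ext$-injective. The paper states this more tersely, but the argument is identical.
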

\begin{proof}
  Observe that $\EE$ is functorially finite in $\mod\Lambda$, as $\#\ind\EE$ is finite.
  Therefore, \cite{almost} implies that $\EE$ has almost split sequences.
  This implies that there is a bijection between $\ind\EE \setminus \ind \PP(\EE)$ and $\ind\EE \setminus \ind \II(\EE)$.
  Since $\ind\EE$ is finite, we can count the number of objects, so we obtain $\# \ind \EE - \# \ind (\PP(\EE)) = \# \ind \EE - \# \ind (\II(\EE))$, which shows that $|P| = \# \ind (\PP(\EE)) = \# \ind (\II(\EE)) = |I|$.
\end{proof}

Before proving the main theorem, we prepare the following lemma.
\begin{lemma}\label{lem:wak-number}
  Let $U$ be a self-orthogonal module such that $\#\ind (U^\perp)$ is finite. Then there exists a Bongartz completion $T$ of $U$, which satisfies $|T| = |\Lambda|$. In particular, we have $|U| \leq |\Lambda|$.
\end{lemma}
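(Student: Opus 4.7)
The plan is to apply the two main tools already developed: Bongartz completion (Theorem \ref{thm:main} / Corollary \ref{cor:bon-for-rep-fin}) to produce the candidate module $T$, and Lemma \ref{lem:exact-cat} to count its indecomposable summands by comparing them with the $\Ext$-injective cogenerator of $T^\perp$.

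First, since $\#\ind(U^\perp)$ is finite, the subcategory $U^\perp$ automatically has a finite cover (any additive generator of $\ind(U^\perp)$ works). Therefore, Theorem \ref{thm:main} yields a Bongartz completion $T$ of $U$, that is, a projectively Wakamatsu tilting module $T$ with $T^\perp = U^\perp$. In particular, $U \in \PP(U^\perp) = \PP(T^\perp) = \add T$, so the inequality $|U| \leq |T|$ is immediate once $|T|$ is computed; this will settle the "in particular" statement.

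Next, to compute $|T|$, I will use Proposition \ref{prop:4cats-projinj}~(3) together with the definition of projectively Wakamatsu tilting. On one hand, $T$ being an $\Ext$-progenerator of $T^\perp$ means $\add T = \PP(T^\perp)$. On the other hand, Proposition \ref{prop:4cats-projinj}~(3) says $D\Lambda$ is an $\Ext$-injective cogenerator of $T^\perp$, which by the conventions means $\add D\Lambda = \II(T^\perp)$. The subcategory $\EE := T^\perp = U^\perp$ is closed under extensions and direct summands and satisfies $\#\ind \EE < \infty$ by hypothesis. Therefore Lemma \ref{lem:exact-cat} applies and gives $|T| = |D\Lambda| = |\Lambda|$, where the last equality uses that the Matlis duality $D$ is an equivalence between $\mod\Lambda$ and $\mod\Lambda^{\op}$ and so preserves the number of indecomposable summands.

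There is no genuine obstacle here; the statement is essentially an assembly of Corollary \ref{cor:bon-for-rep-fin}, Proposition \ref{prop:4cats-projinj}, and Lemma \ref{lem:exact-cat}. The only point that deserves a moment of care is verifying that the finiteness hypothesis on $\#\ind(U^\perp)$ transfers to $T^\perp$ (trivial, since $T^\perp = U^\perp$) so that Lemma \ref{lem:exact-cat} is legitimately applicable to $\EE = T^\perp$.
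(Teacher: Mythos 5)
Your proof is correct and follows essentially the same route as the paper: obtain the Bongartz completion via Corollary \ref{cor:bon-for-rep-fin}, identify $T$ and $D\Lambda$ as the $\Ext$-progenerator and $\Ext$-injective cogenerator of $T^\perp = U^\perp$ using Proposition \ref{prop:4cats-projinj}, and apply Lemma \ref{lem:exact-cat} to conclude $|T| = |D\Lambda| = |\Lambda|$, with $U \in \add T$ giving the final inequality.
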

\begin{proof}
  By Corollary \ref{cor:bon-for-rep-fin}, there is a Bongartz completion $T$ of $U$. Then $U^\perp = T^\perp$ has an $\Ext$-progenerator $T$ and an $\Ext$-injective cogenerator $D\Lambda$.
  Therefore, Lemma \ref{lem:exact-cat} shows that $|T| = |D \Lambda| = |\Lambda|$.
  The last statement follows from the fact that $U \in \add T$.
\end{proof}
Note that this proves the Boundedness conjecture (Conjecture \ref{conj:bounded}) under the finiteness assumption.
Now, we present the main result of this paper.
\begin{theorem}\label{thm:main2}
  Let $T$ be a $\Lambda$-module such that $\#\ind (T^\perp)$ is finite (e.g. $\Lambda$ is representation-finite). Then the following are equivalent.
  \begin{enumerate}
    \item $T$ is projectively Wakamatsu tilting.
    \item $T$ is Wakamatsu tilting.
    \item $T$ is self-orthogonal with $|T| = |\Lambda|$.
    \item $T$ is maximal self-orthogonal.
  \end{enumerate}
  In particular, we have $\YY_T = T^\perp$ in this case.
\end{theorem}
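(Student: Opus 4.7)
The plan is to prove the four equivalences by running a cycle $(1) \Rightarrow (2) \Rightarrow (3) \Rightarrow (4) \Rightarrow (1)$, pairing each arrow with one of the tools already built: either the Bongartz-completion construction (Corollary \ref{cor:bon-for-rep-fin} and Lemma \ref{lem:wak-number}) or the Krull--Schmidt counting principle for exact subcategories (Lemma \ref{lem:exact-cat}). The arrow $(1) \Rightarrow (2)$ is already recorded in Proposition \ref{prop:wak-proj-char}. The arrow $(4) \Rightarrow (1)$ is similarly short: applying Corollary \ref{cor:bon-for-rep-fin} to the self-orthogonal module $T$ produces a Bongartz completion $T \oplus M$ which is projectively Wakamatsu tilting, hence in particular self-orthogonal, so maximality of $T$ forces $M \in \add T$ and $T$ itself is projectively Wakamatsu tilting.

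The substantive step is $(2) \Rightarrow (3)$. The idea is to apply Lemma \ref{lem:exact-cat} to the exact subcategory $\YY_T$ rather than to $T^\perp$. Proposition \ref{prop:4cats-projinj}(4) already identifies $\add T$ with $\PP(\YY_T)$, and $\#\ind \YY_T$ is finite because $\YY_T \subseteq T^\perp$. What remains is to check $\II(\YY_T) = \add D\Lambda$: the inclusion $\add D\Lambda \subseteq \II(\YY_T)$ is automatic since $D\Lambda$ is injective in $\mod \Lambda$, and for the converse I would use that $\YY_T$ is coresolving (Proposition \ref{prop:wak-dual}) and contains $\inj \Lambda$, so for any $I \in \II(\YY_T)$ its injective envelope $I \hookrightarrow E$ in $\mod \Lambda$ has $E \in \inj\Lambda \subseteq \YY_T$, the cokernel $E/I$ is forced into $\YY_T$ by coresolvingness, and the resulting short exact sequence splits by $\Ext$-injectivity of $I$ in $\YY_T$, giving $I \in \add E \subseteq \add D\Lambda$. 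Lemma \ref{lem:exact-cat} then yields $|T| = |D\Lambda| = |\Lambda|$.

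For $(3) \Rightarrow (4)$, suppose $T \oplus M$ is self-orthogonal. Since $(T \oplus M)^\perp \subseteq T^\perp$ still has only finitely many indecomposables, Lemma \ref{lem:wak-number} applied to $T \oplus M$ gives $|T \oplus M| \leq |\Lambda| = |T|$, while the reverse inequality is automatic because $T$ is a summand of $T \oplus M$; hence $|T| = |T \oplus M|$ and $M \in \add T$. The concluding assertion $\YY_T = T^\perp$ follows immediately from $(1)$ via Proposition \ref{prop:wak-proj-char}.

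The one place I expect to have to be careful is the identification $\II(\YY_T) = \add D\Lambda$ in $(2) \Rightarrow (3)$: this is where the Wakamatsu tilting hypothesis (as opposed to mere self-orthogonality) is genuinely used, since without the coresolvingness of $\YY_T$ one cannot conclude that $E/I$ lies in $\YY_T$ and the splitting argument would collapse.
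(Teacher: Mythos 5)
Your proposal is correct and follows exactly the same cyclic scheme $(1)\Rightarrow(2)\Rightarrow(3)\Rightarrow(4)\Rightarrow(1)$ as the paper, invoking the same tools (Proposition \ref{prop:wak-proj-char}, Lemma \ref{lem:exact-cat} applied to $\YY_T$, Lemma \ref{lem:wak-number}, and Corollary \ref{cor:bon-for-rep-fin}). The only difference is that you explicitly verify $\II(\YY_T) = \add D\Lambda$ via the injective-envelope splitting argument, a detail the paper absorbs into the assertion that $D\Lambda$ is an $\Ext$-injective cogenerator of the coresolving subcategory $\YY_T$.
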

\begin{proof}
  (1) $\Rightarrow$ (2): This follows from Proposition \ref{prop:wak-proj-char}.

  (2) $\Rightarrow$ (3): Since $T$ is Wakamatsu tilting, $\YY_T$ has an $\Ext$-progenerator $T$ and an $\Ext$-injective cogenerator $D\Lambda$.
  By $\YY_T \subseteq T^\perp$, we can apply Lemma \ref{lem:exact-cat} to $\YY_T$, and we obtain $|T| = |D\Lambda| = |\Lambda|$.

  (3) $\Rightarrow$ (4):
  Suppose that $T \oplus M$ is self-orthogonal. Then, since $(T \oplus M)^\perp \subseteq T^\perp$, by applying Lemma \ref{lem:wak-number} to $T \oplus M$, we obtain $|T \oplus M| \leq |\Lambda|$. Thus, (3) implies $|\Lambda| = |T| \leq |T \oplus M| \leq |\Lambda|$, so $|T| = |T \oplus M|$. Therefore, we obtain $M \in \add T$.

  (4) $\Rightarrow$ (1):
  By Corollary \ref{cor:bon-for-rep-fin}, we obtain a Bongartz completion $T'$ of $T$. However, since $T$ is maximal self-orthogonal, we must have $\add T = \add T'$. Thus, $T$ is projectively Wakamatsu tilting since $T'$ is so.
\end{proof}

Theorem \ref{thm:main2} is useful for finding Wakamatsu tilting modules in $\mod \Lambda$ if $\Lambda$ is representation-finite. Indeed, for a given self-orthogonal module $T$, it is clearly easier to check whether $T$ is maximal self-orthogonal or $|T| = |\Lambda|$, compared to verifying that $\Lambda \in \XX_T$ or $D\Lambda \in \YY_T$. This is also helpful for computing  $\YY_T$ (and $\XX_T$) for a Wakamatsu tilting module $T$ since we get $\YY_T = T^\perp$ (and $\XX_T = {}^\perp T$) due to Theorem \ref{thm:main2}.

\begin{remark}\label{rem:ar}
  Let us briefly discuss AR tilting modules in Theorem \ref{thm:main2}. If $T$ is AR tilting, then it is projectively Wakamatsu tilting by Proposition \ref{prop:tilt-rels}. Conversely, if $T$ satisfies the equivalent conditions in Theorem \ref{thm:main2}, then it is AR tilting, since $\#\ind (T^\perp)$ is finite and thus $T^\perp$ is covariantly finite, and $\add T = \PP(T^\perp)$ holds as $T$ is projectively Wakamatsu tilting.
\end{remark}

By applying the dual of Theorem \ref{thm:main2} to $\Lambda_\Lambda$, we immediately obtain the following consequence in Gorenstein homological algebra.
\begin{corollary}\label{cor:weakly-gor}
  Suppose that $\# \ind ({}^\perp \Lambda)$ is finite.
  Then $\Lambda$ is weakly Gorenstein, that is, $\GP\Lambda = {}^\perp \Lambda$ holds.
\end{corollary}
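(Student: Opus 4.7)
The plan is to apply the dual of Theorem \ref{thm:main2} to the module $T = \Lambda_\Lambda$. First I would record the precise dual statement: for a $\Lambda$-module $T$ with $\#\ind({}^\perp T)$ finite, the conditions ``$T$ is injectively Wakamatsu tilting'', ``$T$ is Wakamatsu tilting'', ``$T$ is self-orthogonal with $|T| = |\Lambda|$'', and ``$T$ is maximal self-orthogonal'' are equivalent, and in particular $\XX_T = {}^\perp T$. This dualization is legitimate because the notion of Wakamatsu tilting is self-dual by Proposition \ref{prop:wak-dual}, and the ingredients used in the proof of Theorem \ref{thm:main2} (Bongartz completion, the ``extension-closed with finitely many indecomposables'' hypothesis, and Lemma \ref{lem:exact-cat}) all admit verbatim dual versions; replacing the words ``projectively'' with ``injectively'' and swapping $T^\perp$ with ${}^\perp T$, as well as $\YY_T$ with $\XX_T$, yields the statement I need.

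Next I would apply this dual to $T = \Lambda_\Lambda$. Since $\Lambda_\Lambda$ is projective it is trivially self-orthogonal, and by Proposition \ref{prop:4cats-projinj}(2) the subcategory $\XX_\Lambda$ has $\Lambda$ itself as an $\Ext$-projective object, so $\Lambda \in \XX_\Lambda$; thus $\Lambda_\Lambda$ is Wakamatsu tilting, as already noted in Example \ref{ex:weakly-gor}. The hypothesis $\#\ind({}^\perp\Lambda) < \infty$ is precisely what the dual of Theorem \ref{thm:main2} requires, so the dual applies and yields $\XX_\Lambda = {}^\perp\Lambda$.

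Finally I would invoke the definitional identity $\XX_\Lambda = \GP\Lambda$ recorded in the paper to conclude $\GP\Lambda = {}^\perp\Lambda$, which is exactly the weakly Gorenstein condition. There is no substantial obstacle here: essentially everything is driven by Theorem \ref{thm:main2} and the self-duality of Wakamatsu tilting modules. The only mild care needed is to make sure the dualized versions of the supporting lemmas (especially Lemma \ref{lem:exact-cat}, which is already self-dual in its statement) are invoked correctly when one unpacks the dual of Theorem \ref{thm:main2}; once that is done, the corollary follows in one line.
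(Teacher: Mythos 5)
Your proof takes essentially the same route as the paper: apply the dual of Theorem \ref{thm:main2} to $T = \Lambda_\Lambda$, use the fact that $\Lambda_\Lambda$ is Wakamatsu tilting to conclude $\XX_\Lambda = {}^\perp\Lambda$, and identify $\XX_\Lambda$ with $\GP\Lambda$. The one small slip is the parenthetical appeal to Proposition \ref{prop:4cats-projinj}(2): that statement makes $\Lambda$ an $\Ext$-\emph{injective} cogenerator of $\XX_\Lambda$, not an $\Ext$-projective object; in any case $\Lambda \in \XX_\Lambda$ holds trivially via the zero $\Lambda$-coresolution, so the conclusion that $\Lambda_\Lambda$ is Wakamatsu tilting is unaffected.
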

\begin{proof}
  Since $\Lambda$ is Wakamatsu tilting, the dual of Theorem \ref{thm:main2} implies we have that $\Lambda_\Lambda$ is injectively Wakamatsu tilting.
  Then the dual of Proposition \ref{prop:wak-proj-char} shows $\XX_\Lambda = {}^\perp \Lambda$, that is, $\GP \Lambda = {}^\perp\Lambda$.
\end{proof}
This result is obtained in \cite[Corollary 5.11]{be} with a different proof. We note that the condition of finiteness of $\#\ind({}^\perp \Lambda)$ is weakened in \cite[Theorem 1.3]{RZ}.

\section{Binary relation on Wakamatsu tilting modules}
In this section, we introduce and discuss a binary relation on the set of Wakamatsu tilting modules, which generalizes the well-known poset structure on tilting modules.
\begin{definition}\label{def:binary}
  Let $\Lambda$ be an artin algebra.
  \begin{enumerate}
    \item $\wtilt\Lambda$ denotes the set of isomorphism classes of basic Wakamatsu tilting $\Lambda$-modules.
    \item $\tilt\Lambda$ denotes the set of isomorphism classes of basic tilting $\Lambda$-modules (of finite projective dimension).
    \item Let $T_1, T_2 \in \wtilt\Lambda$. We define $T_1 \geq T_2$ if $\Ext_\Lambda^{>0}(T_1, T_2) = 0$.
  \end{enumerate}
  We often identify modules with their representatives in $\wtilt\Lambda$.
\end{definition}
We note that a similar binary relation $\succeq$ is considered in \cite{wakamatsu2}: for $T_1,T_2\in\wtilt\Lambda$, we have $T_1 \succeq T_2$ if $T_1 \in \XX_{T_2}$ and $T_2 \in \YY_{T_1}$. This coincides with ours for the representation-finite case by Theorem \ref{thm:main2}.
\begin{remark}
  The poset $(\tilt\Lambda, \leq)$ has been studied extensively (e.g. \cite{HU-partial}), and $(\wtilt\Lambda, \leq)$ contains it as a full subposet.
  However, there are several important differences between $(\wtilt\Lambda, \leq)$ and $(\tilt\Lambda, \leq)$ as follows:
  \begin{itemize}
    \item While $(\tilt\Lambda, \leq)$ is always a poset, $(\wtilt\Lambda, \leq)$ is \emph{not necessarily a poset, even if $\Lambda$ is representation-finite}, since $\leq$ may not be transitive (see Example \ref{ex:not-poset}).
    \item If $T_1$ and $T_2$ are both tilting, then $T_1 \geq T_2$ is equivalent to $T_1^\perp \supseteq T_2^\perp$. However, $T_1 \geq T_2$ in $(\wtilt\Lambda, \leq)$ does not necessarily imply $T_1^\perp \supseteq T_2^\perp$ (see Example \ref{ex:poset-differ}).
  \end{itemize}
\end{remark}

The basic property of $(\wtilt\Lambda, \leq)$ is as follows.
\begin{proposition}
  Consider $\leq$ on $\wtilt\Lambda$.
  \begin{enumerate}
    \item $D\Lambda \leq T \leq \Lambda$ holds in $(\wtilt\Lambda, \leq)$ for every $T \in \wtilt\Lambda$.
    \item $\leq$ is reflexive.
    \item If $\Lambda$ is representation-finite, then $\leq$ is antisymmetric.
  \end{enumerate}
\end{proposition}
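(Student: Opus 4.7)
The plan is to handle the three parts in order, with part (3) being the only one requiring any real work.

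For part (1), the two inequalities $T \leq \Lambda$ and $D\Lambda \leq T$ are immediate from the definition: $\Ext_\Lambda^{>0}(\Lambda, T) = 0$ because $\Lambda_\Lambda$ is projective, and $\Ext_\Lambda^{>0}(T, D\Lambda) = 0$ because $D\Lambda$ is injective. Part (2) is also immediate: reflexivity $T \geq T$ is exactly the condition $\Ext_\Lambda^{>0}(T, T) = 0$, which is the self-orthogonality built into the definition of a Wakamatsu tilting module.

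For part (3), suppose $T_1, T_2 \in \wtilt\Lambda$ satisfy both $T_1 \geq T_2$ and $T_2 \geq T_1$, meaning $\Ext_\Lambda^{>0}(T_1, T_2) = 0$ and $\Ext_\Lambda^{>0}(T_2, T_1) = 0$. Combined with the self-orthogonality of $T_1$ and $T_2$ individually, this yields $\Ext_\Lambda^{>0}(T_1 \oplus T_2, T_1 \oplus T_2) = 0$, so the module $T_1 \oplus T_2$ is self-orthogonal. Now I invoke the representation-finite hypothesis: by Theorem \ref{thm:main2}, every Wakamatsu tilting module over such $\Lambda$ is maximal self-orthogonal. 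Applying this to $T_1$, the self-orthogonality of $T_1 \oplus T_2$ forces $T_2 \in \add T_1$; symmetrically $T_1 \in \add T_2$. Since $T_1$ and $T_2$ are basic, $\add T_1 = \add T_2$ implies $T_1 \cong T_2$, so they are equal in $\wtilt\Lambda$.

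There is no real obstacle here; the only non-trivial input is the equivalence of Wakamatsu tilting and maximal self-orthogonal modules in the representation-finite case, which is exactly Theorem \ref{thm:main2} and has already been proved. The proof is mostly an exercise in unpacking definitions, and the role of part (3) is really to highlight that antisymmetry is a reflection of maximality rather than a feature of the binary relation itself — this is consistent with the remark following the definition, which warns that transitivity can fail even in the representation-finite setting.
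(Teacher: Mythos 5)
Your proof is correct and follows essentially the same route as the paper: parts (1) and (2) are read off from projectivity of $\Lambda$, injectivity of $D\Lambda$, and self-orthogonality, while part (3) uses exactly the paper's observation that $T_1 \oplus T_2$ is self-orthogonal together with the maximal self-orthogonality of Wakamatsu tilting modules from Theorem~\ref{thm:main2}. Nothing to add.
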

\begin{proof}
  (1) This follows directly from the definition.

  (2)  This is a consequence of $\Ext_\Lambda^{>0}(T, T) = 0$ for any $T \in \wtilt\Lambda$.

  (3) Suppose $T_1 \geq T_2 \geq T_1$. Then we obtain $\Ext_\Lambda^{>0}(T_1, T_2) = 0$ by $T_1 \geq T_2$, and also $\Ext_\Lambda^{>0}(T_2, T_1) = 0$ by $T_2 \geq T_1$. This implies that $T_1 \oplus T_2$ is self-orthogonal.
  However, since $\Lambda$ is representation-finite, $T_1$ and $T_2$ are maximal self-orthogonal by Theorem \ref{thm:main2}. We must thus have $T_1 \in \add T_2$ and $T_2 \in \add T_1$, which implies $T_1 \iso T_2$ since $T_1$ and $T_2$ are basic.
\end{proof}
As we can see from its proof, the antisymmetry of $\leq$ heavily depends on the fact that any Wakamatsu tilting $\Lambda$-module is maximal self-orthogonal.
Thus, the representation-infinite case remains unknown (Conjecture \ref{conj:msoc}).

The main result of this subsection is that $\tilt\Lambda \subseteq \wtilt\Lambda$ is upward-closed:
\begin{theorem}\label{thm:tilt-up-closed}
  Let $\Lambda$ be a representation-finite artin algebra. If $T \geq U$ holds in $\wtilt\Lambda$ and $U \in \tilt\Lambda$, then $T \in \tilt\Lambda$.
\end{theorem}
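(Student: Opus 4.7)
The aim is to show $\pd T_\Lambda$ is finite: since $\Lambda$ is representation-finite, Theorem \ref{thm:main2} gives that $T$ is projectively Wakamatsu tilting, and then Proposition \ref{prop:tilt-rels}(3) forces $T$ to be tilting. Set $d := \pd U_\Lambda$. The plan is to prove $\pd T \leq d$ by exhibiting, for each $X \in \mod\Lambda$, a $T^\perp$-coresolution of length $\leq d$; applying $R\Hom(T,-)$ to such a coresolution (every term being $T$-acyclic because it lies in $T^\perp$) then yields $\Ext^{>d}(T,X)=0$ uniformly in $X$.

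The first step is immediate: $T \geq U$ means $U \in T^\perp$, so $\add U \subseteq T^\perp$, and therefore the tilting coresolution $0 \to \Lambda \to U^0 \to \cdots \to U^d \to 0$ witnesses $\Lambda \in \widehat{T^\perp}$ with length $d$. To upgrade this to an arbitrary $X$, observe that since $\pd U = d$ we have $\Ext^{>d}(U,X)=0$, so the $d$-th cosyzygy $\Sigma^d X$ (from the injective resolution of $X$) lies in $U^\perp$; combined with $I^0,\dots,I^{d-1} \in \inj\Lambda \subseteq T^\perp$, this produces a $T^\perp$-coresolution $0 \to X \to I^0 \to \cdots \to I^{d-1} \to \Sigma^d X \to 0$ of length $d$, provided we can establish the key inclusion $U^\perp \subseteq T^\perp$.

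The main work lies in proving $U^\perp \subseteq T^\perp$. For $X \in U^\perp = \YY_U$ (equality by Theorem \ref{thm:main2} applied to $U$), fix a $U$-resolution $\cdots \to U_1 \to U_0 \to X \to 0$ with $U_i \in \add U$ and syzygies $K_n \in U^\perp$. The vanishing $\Ext^{>0}(T,U_i)=0$, together with the short exact sequences $0 \to K_{n+1} \to U_n \to K_n \to 0$, yields the dimension-shift isomorphisms $\Ext^i(T,X) \cong \Ext^{i+n}(T,K_n)$ for all $i \geq 1$ and $n \geq 0$. Since $\Lambda$ is representation-finite, the set $\{K_n\}$ is finite up to isomorphism, so the sequence is eventually periodic: for some $n_0$ and $p>0$ one has $K_{n+p} \cong K_n$ (stably) for $n \geq n_0$. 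The resulting periodicity of $\Ext^{\ast}(T,K_{n_0})$, confronted with the bound $\Ext^{>d}(T,\Lambda)=0$ (obtained by applying $\Hom(T,-)$ to the tilting coresolution of $\Lambda$) and the self-orthogonality of $T$, forces these Ext-groups to vanish, whence $\Ext^{>0}(T,X)=0$.

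The hardest part is the last paragraph: representation-finiteness is essential, since it is precisely pigeonhole on syzygies that converts the potentially infinite $U$-resolution into usable finiteness. In representation-infinite settings the $K_n$ may never repeat, and the inclusion $U^\perp \subseteq T^\perp$ can genuinely fail, which is consistent with the examples cited in Section \ref{sec:conj}.
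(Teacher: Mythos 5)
Your strategy is genuinely different from the paper's: the paper shows that $U$ has a \emph{finite} $T$-resolution (of length $\leq \pd U$, via the dimension-shift $\Ext^1(\im g_d, \im g_{d+1}) \cong \Ext^{d+1}(U, \im g_{d+1}) = 0$), then works in the exact category $\EE := T^\perp \cap {}^\perp U$, where $T$ is the Ext-progenerator and $U$ the Ext-injective cogenerator, and applies the Gorenstein-symmetry Lemma~\ref{lem:GSC} to get a finite $\add U$-coresolution of $T$. Your plan is instead to prove the inclusion $U^\perp \subseteq T^\perp$ and then bound $\pd T$ via injective coresolutions. The reduction in your second paragraph is fine: if $U^\perp \subseteq T^\perp$ holds, then $\Sigma^d X \in T^\perp$ for all $X$ gives $\pd T \leq d$, and Theorem~\ref{thm:main2} plus Proposition~\ref{prop:tilt-rels}(3) finishes. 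However, the argument for $U^\perp \subseteq T^\perp$ has a genuine gap.

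The gap is in the last step. The bound $\Ext^{>d}_\Lambda(T,\Lambda) = 0$ is trivially true for \emph{any} module $T$, since $\Lambda$ is projective and so $\Ext^{>0}_\Lambda(T,\Lambda)=0$ already; it carries no information about $\Ext^*(T, K_{n_0})$, and periodicity $\Ext^i(T,K_{n_0}) \cong \Ext^{i+p}(T,K_{n_0})$ is perfectly consistent with all these groups being nonzero. Nothing in the argument forces vanishing. Furthermore, the claim that $\{K_n\}$ is finite up to isomorphism is not justified: representation-finiteness bounds the set of \emph{indecomposables}, not the multiplicities occurring in the syzygies $K_n$, and already for $U = \Lambda$ and $X$ of infinite projective dimension the minimal syzygies $\Omega^n X$ need not be eventually periodic. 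What makes the paper's argument go through is a different use of representation-finiteness: since $\#\ind\EE < \infty$, the supremum $d := \sup\{\pd_\EE X : X \in \EE,\ \pd_\EE X < \infty\}$ is finite, which gives a uniform bound that forces the injective coresolution of $T$ inside $\EE$ to split off at stage $d$ (the proof of Lemma~\ref{lem:GSC}). This uniform-bound phenomenon, rather than pigeonhole on syzygies, is what your proposal is missing.
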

\begin{proof}
  Suppose $T \geq U$ in $\wtilt\Lambda$, that is, $\Ext_\Lambda^{>0}(T, U) = 0$.
  We will show that $U$ has a finite $T$-resolution. The proof is similar to \cite[Proposition 4.4]{MR}, but we provide it here for the reader's convenience.
  By Theorem \ref{thm:main2}, since $\Lambda$ is representation-finite, we have $U \in T^\perp = \YY_T$. Thus, we obtain the following exact sequence
  \[
    \begin{tikzcd}
      \cdots \rar["g_2"] & T_1 \rar["g_1"] & T_0 \rar["g_0"] & U \rar & 0
    \end{tikzcd}
  \]
  in $\mod\Lambda$ with $T_i \in \add T$ and $\im g_i \in T^\perp$ for all $i \geq 0$.
  Let $d := \pd U_\Lambda$, which is finite since $U$ is tilting. Consider the short exact sequence
  \begin{equation}\label{eq:d}
    \begin{tikzcd}
      0 \rar & \im g_{d+1} \rar & T_d \rar & \im g_d \rar & 0.
    \end{tikzcd}
  \end{equation}
  This corresponds to an element in $\Ext_\Lambda^1(\im g_d, \im g_{d+1})$.
  However, since $\im g_{d+1} \in T^\perp$, we can inductively show
  \[
    \Ext_\Lambda^1(\im g_d, \im g_{d+1})
    = \Ext_\Lambda^2(\im g_{d-1}, \im g_{d+1})
    = \cdots = \Ext_\Lambda^{d+1}(\im g_0, \im g_{d+1}),
  \]
  and the last term is $\Ext_\Lambda^{d+1}(U, \im g_{d+1})$, which is zero by $\pd U_\Lambda = d$.
  It follows that \eqref{eq:d} splits. Therefore, we obtain the following finite exact sequence by replacing $T_d$ with $\im g_d$:
  \begin{equation}\label{eq:coresol}
    \begin{tikzcd}
      0 \rar & T_d \rar["g_d"] & \cdots \rar["g_2"] & T_1 \rar["g_1"] & T_0 \rar["g_0"] & U \rar & 0,
    \end{tikzcd}
  \end{equation}
  with $T_i \in \add T$ and $\im g_i \in T^\perp$ for all $i \geq 0$. Moreover, since $T \in {}^\perp U$ and $^\perp U$ is resolving, $\im g_i \in T^\perp \cap {}^\perp U$ for all $i \geq 0$.

  Now, consider the subcategory $\EE := T^\perp \cap {}^\perp U$ of $\mod\Lambda$, which is closed under extensions and direct summands and contains both $T$ and $U$.
  Since $T$ is an $\Ext$-progenerator of $T^\perp$ and $^\perp U$ is resolving, it is straightforward to see that $T$ is an $\Ext$-progenerator of $\EE$, and similarly, $U$ is an $\Ext$-injective cogenerator of $\EE$.
  Then, \eqref{eq:coresol} implies that the ``projective dimension'' of $U$ in the exact category $\EE$ is finite.
  Then \eqref{eq:coresol} shows that the ``projective dimension'' of $U$ in an exact category $\EE$ is finite.
  Then, by Lemma \ref{lem:GSC} below, the ``injective dimension'' of $T$ in $\EE$ is finite, so we obtain the following exact sequence:
  \[
    \begin{tikzcd}
      0 \rar & T \rar & U^0 \rar & U^1 \rar & \cdots
      \rar & U^{d'} \rar & 0,
    \end{tikzcd}
  \]
  with $U^i \in \add U$ for all $i \geq 0$. Since $\pd U_\Lambda$ is finite, so is $\pd T_\Lambda$.
  Theorem \ref{thm:main2} implies that $T$ is projectively Wakamatsu tilting since $\Lambda$ is representation-finite. Then, Proposition \ref{prop:tilt-rels} (3) implies that $T$ is a tilting module.
\end{proof}

In the proof, we use the lemma below, which can be considered the Gorenstein Symmetry conjecture for exact categories with finitely many indecomposables. We omit the explanation for the terms related to exact categories, such as conflations and projective dimension.
\begin{lemma}\label{lem:GSC}
  Let $\EE$ be a Krull-Schmidt exact category with a progenerator $P$ and an injective cogenerator $I$. Suppose that $\#\ind\EE$ is finite, and that $\pd_\EE I$ is finite. Then $\id_\EE P$ is finite.
\end{lemma}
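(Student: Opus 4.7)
The goal is to conclude $\id_\EE P < \infty$. My plan is to first show that every object of $\EE$ has finite projective dimension; combined with $\#\ind\EE < \infty$, this yields a uniform bound $N$ with $\Ext^{>N}_\EE(-,-) = 0$, whence $\id_\EE P \leq N$.

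Set $\EE^{<\infty} := \{X \in \EE : \pd_\EE X < \infty\}$. From the long exact sequence of $\Ext$, one checks that $\EE^{<\infty}$ is closed under direct summands, extensions, kernels of deflations, and cokernels of inflations. We have $\add P \subseteq \EE^{<\infty}$ trivially, and the hypothesis $\pd_\EE I < \infty$ gives $\add I \subseteq \EE^{<\infty}$. Since $\EE$ has enough injectives, every $X \in \EE$ fits in a conflation $0 \to X \to I^0 \to \Sigma X \to 0$ with $I^0 \in \add I \subseteq \EE^{<\infty}$; the closure properties then yield the equivalence $X \in \EE^{<\infty} \Leftrightarrow \Sigma X \in \EE^{<\infty}$, and similarly for syzygies $\Omega X$.

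The crux is to show $\EE^{<\infty} = \EE$. Since $\EE$ is Krull--Schmidt with $\#\ind\EE < \infty$ and has enough projectives and injectives, $\EE$ admits almost split sequences, and by the argument of Lemma \ref{lem:exact-cat} there is a Nakayama-type correspondence $\nu \colon \add P \isoto \add I$ induced by AR duality. I plan to extend this to a duality that sends a finite $\add P$-resolution of an injective to a finite $\add I$-coresolution of the corresponding projective; applied to the given resolution $0 \to Q_d \to \cdots \to Q_0 \to I \to 0$, this would translate into a finite injective coresolution of $P$ in $\EE$, directly giving $\id_\EE P \leq d$. An alternative route is to embed $\EE$ via Yoneda into $\mod \Gamma$, where $\Gamma := \End_\EE(M)^{\op}$ for an additive generator $M$ (so that $\EE \simeq \proj\Gamma$ as additive categories), and bound the global dimension of $\Gamma$ through a variant of Auslander's formula for exact categories with finitely many indecomposables, then transfer the resulting finiteness back through the embedding.

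The main obstacle is making either the Nakayama/AR duality or the Auslander-type global-dimension bound rigorous in the abstract exact category setting. The classical forms of these results are stated for $\mod\Lambda$ with $\Lambda$ representation-finite, and some care is needed to implement them with only the data of $\EE$ being Krull--Schmidt, having $\#\ind\EE < \infty$, and equipped with a progenerator and an injective cogenerator. Once either duality is available, the conclusion $\id_\EE P < \infty$ follows immediately from $\pd_\EE I < \infty$.
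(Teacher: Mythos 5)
There is a genuine gap, and in fact your main planned route is wrong. You declare ``The crux is to show $\EE^{<\infty} = \EE$,'' i.e.\ that every object of $\EE$ has finite projective dimension. This is false under the hypotheses of the lemma. Take $\Lambda = k[x]/(x^2)$ and $\EE = \mod\Lambda$: then $P = \Lambda$ is a progenerator, $I = D\Lambda \cong \Lambda$ is an injective cogenerator, $\#\ind\EE = 2 < \infty$, and $\pd_\EE I = 0 < \infty$. The conclusion $\id_\EE P < \infty$ of course holds (in fact $\id_\EE P = 0$), but $\EE^{<\infty} \neq \EE$ since the simple module has infinite projective dimension. So your first step can never succeed, and any argument built on it (the uniform bound $N$ with $\Ext^{>N}_\EE(-,-) = 0$) is aiming at a false statement. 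Your alternative sketches --- upgrading the Nakayama-type correspondence $\nu\colon \add P \to \add I$ to a duality that transports $\add P$-resolutions to $\add I$-coresolutions, or bounding the global dimension of the Auslander algebra $\Gamma$ and transferring back --- are not carried out, and you yourself flag them as the main obstacle; as stated they are not proofs. In particular for the Auslander-algebra route, it is not explained what finiteness of $\mathrm{gldim}\,\Gamma$ (always $\leq 2$ for a representation-finite situation) would have to do with $\id_\EE P$, since the Yoneda embedding $\EE \hookrightarrow \proj\Gamma$ is only additive and does not transport $\Ext$-groups in $\EE$ to $\Ext$-groups over $\Gamma$.

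The paper's proof is considerably more economical and avoids all of this. It does not try to show that every object has finite projective dimension; instead it only needs that the \emph{cosyzygies of $P$} do. Concretely: one observes that if two terms of a conflation have finite projective dimension then so does the third; since $\#\ind\EE$ is finite, the number $d := \sup\{\pd_\EE X : X \in \EE,\ \pd_\EE X < \infty\}$ is finite. Taking the injective resolution of $P$ with cosyzygy conflations $0 \to \Sigma^i(P) \to I^i \to \Sigma^{i+1}(P) \to 0$, the hypotheses $\pd_\EE P = 0$ and $\pd_\EE I < \infty$ give inductively that every $\Sigma^i(P)$ lies in $\EE^{<\infty}$, hence has projective dimension $\leq d$. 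Then $\Ext^1_\EE(\Sigma^{d+1}(P), \Sigma^d(P)) \cong \Ext^{d+1}_\EE(\Sigma^{d+1}(P), P) = 0$, so the $d$-th conflation splits, forcing $\Sigma^d(P) \in \add I$ and $\id_\EE P \leq d$. No almost split sequences, Nakayama correspondence, or Auslander-algebra argument is needed --- the finiteness of $\#\ind\EE$ enters only through the definition of $d$. I would suggest abandoning the ``$\EE^{<\infty} = \EE$'' plan and the duality machinery and instead exploiting the much weaker (and true) fact that the cosyzygies of $P$ are in $\EE^{<\infty}$.
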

\begin{proof}
  We first make the following observation. Suppose that we have a conflation
  \[
    \begin{tikzcd}
      0 \rar & X \rar & Y \rar & Z \rar & 0
    \end{tikzcd}
  \]
  in $\EE$. If $X$ and $Y$ have finite projective dimension, then so does $Z$. This can be proved by a similar argument to the case of usual modules over rings.
  Moreover, since $\#\ind\EE$ is finite, we can define an integer $d$ by the following equation:
  \[
    d := \sup \{ \pd_\EE X \mid X \in \EE \text{ with } \pd_\EE X < \infty \}
  \]

  Now we will prove that $\id_\EE P$ is finite.
  Since $\EE$ has an injective cogenerator $I$, we can take the injective resolution of $P$, which consists of conflations of the form:
  \begin{equation}\label{eq:cosyzygy}
    \begin{tikzcd}
      0 \rar & \Sigma^i(P) \rar & I^i \rar & \Sigma^{i+1}(P) \rar & 0
    \end{tikzcd}
  \end{equation}
  with $\Sigma^0(P) = P$ and $I^i \in \add I$ for all $i$.
  Since $P$ and $I$ have finite projective dimension, by the earlier observation, we can inductively show that $\Sigma^i(P)$ has finite projective dimension for every $i$. Thus, we obtain $\pd \Sigma^i(P) \leq d$ for every $i$ by the definition of $d$.
  Now consider the following isomorphism:
  \[
    \Ext^1_\EE(\Sigma^{d+1}(P), \Sigma^d(P)) \iso \Ext_\EE^{d+1}(\Sigma^{d+1}(P), P).
  \]
  Since $\pd \Sigma^{d+1}(P) \leq d$, the right hand side vanishes. Therefore, the conflation in \eqref{eq:cosyzygy} for $i = d$ splits. This implies $\Sigma^d(P) \in \add I$, so we obtain an injective resolution of $P$ of length $d$ by replacing $I^d$ with $\Sigma^d(P)$. Thus, we conclude $\id_\EE P \leq d$.
\end{proof}

Applying Theorem \ref{thm:tilt-up-closed}, we obtain the following remarkable result about Iwanaga-Gorenstein algebras. Recall that an artin algebra $\Lambda$ is \emph{Iwanaga-Gorenstein} if $\id\Lambda_\Lambda$ and $\pd (D\Lambda)_\Lambda$ are both finite.
\begin{corollary}\label{cor:IG}
  Let $\Lambda$ be a representation-finite Iwanaga-Gorenstein artin algebra. Then every self-orthogonal $\Lambda$-module has finite projective dimension, and the following conditions are equivalent for $T \in \mod\Lambda$:
  \begin{enumerate}
    \item $T$ is tilting.
    \item $T$ is projectively Wakamatsu tilting.
    \item $T$ is Wakamatsu tilting.
  \end{enumerate}
\end{corollary}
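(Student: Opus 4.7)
The plan is to exploit Theorem \ref{thm:tilt-up-closed} by using $D\Lambda$ as a tilting ``anchor'' inside $\wtilt\Lambda$. I first observe that, because $\Lambda$ is Iwanaga-Gorenstein, the injective cogenerator $D\Lambda$ is itself a tilting module: it is self-orthogonal since it is injective (so $\Ext^{>0}_\Lambda(-, D\Lambda) = 0$ holds on all of $\mod\Lambda$); it has finite projective dimension by the Iwanaga-Gorenstein assumption $\pd (D\Lambda)_\Lambda < \infty$; and a finite injective resolution of $\Lambda_\Lambda$, which exists because $\id \Lambda_\Lambda < \infty$, provides the required finite $\add D\Lambda$-coresolution of $\Lambda$. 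Hence $D\Lambda \in \tilt\Lambda$.

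To show that every self-orthogonal $T \in \mod\Lambda$ has finite projective dimension, I apply Corollary \ref{cor:bon-for-rep-fin}: since $\Lambda$ is representation-finite, $T$ admits a Bongartz completion $T'$ which is projectively Wakamatsu tilting, with $T \in \add T'$. Because $D\Lambda$ is injective we have $\Ext^{>0}_\Lambda(T', D\Lambda) = 0$, that is, $T' \geq D\Lambda$ in $\wtilt\Lambda$. Theorem \ref{thm:tilt-up-closed} then forces $T'$ to be tilting, so $\pd T'_\Lambda$ is finite and consequently $\pd T_\Lambda$ is finite.

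For the equivalences, $(1) \Rightarrow (2)$ is Proposition \ref{prop:tilt-rels}, and $(2) \Rightarrow (3)$ is the last assertion of Proposition \ref{prop:wak-proj-char}. For $(3) \Rightarrow (1)$, a Wakamatsu tilting module $T$ is in particular self-orthogonal, so the first part of the corollary gives $\pd T_\Lambda < \infty$; moreover, Theorem \ref{thm:main2} applied to the representation-finite algebra $\Lambda$ shows that $T$ is projectively Wakamatsu tilting, and Proposition \ref{prop:tilt-rels} (3) then promotes $T$ to a tilting module. The only real conceptual step is identifying $D\Lambda$ as the tilting anchor supplied by the Iwanaga-Gorenstein hypothesis; once $D\Lambda \in \tilt\Lambda$ is in hand, Theorem \ref{thm:tilt-up-closed} does all the work, and I do not anticipate any substantial obstacle beyond this observation.
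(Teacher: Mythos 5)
Your proposal is correct and takes essentially the same approach as the paper: both identify $D\Lambda$ as a tilting module in $\wtilt\Lambda$ (via the Iwanaga-Gorenstein hypothesis) and use Theorem \ref{thm:tilt-up-closed} together with Corollary \ref{cor:bon-for-rep-fin} to conclude. The only difference is organizational — the paper proves $(3)\Rightarrow(1)$ directly via Theorem \ref{thm:tilt-up-closed} and then derives the finite-projective-dimension claim, whereas you establish the finite-projective-dimension claim first and route $(3)\Rightarrow(1)$ through Theorem \ref{thm:main2} and Proposition \ref{prop:tilt-rels}(3); both are valid and rely on the same key lemma.
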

\begin{proof}
  Since $\Lambda$ is representation-finite, Corollary \ref{cor:bon-for-rep-fin} implies that any self-orthogonal module is a direct summand of some projectively Wakamatsu tilting module.
  In addition, (1) $\Rightarrow$ (2) $\Rightarrow$ (3) holds in general by Proposition \ref{prop:wak-proj-char} and Example \ref{ex:tilt-is-wak-proj}. Therefore, it is sufficient to prove (3) $\Rightarrow$ (1).

  Let $T \in \wtilt\Lambda$, then we have $T \geq D\Lambda$. Moreover, since $\Lambda$ is Iwanaga-Gorenstein, it follows easily that $D\Lambda$ is tilting. Hence, Theorem \ref{thm:tilt-up-closed} implies that $T \in \tilt\Lambda$.
\end{proof}
Therefore, for a representation-finite Iwanaga-Gorenstein algebra, the notions of tilting, cotilting, projectively Wakamatsu tilting, injectively Wakamatsu tilting, and Wakamatsu tilting modules are equivalent.
Since the statement of the above corollary is simple, the following question arises:
\begin{question}
  Is there a direct proof of the fact that every self-orthogonal $\Lambda$-module over a representation-finite Iwanaga-Gorenstein algebra has finite projective dimension?
\end{question}

In addition, it is unknown whether similar results hold for representation-infinite Iwanaga-Gorenstein algebras, leading to the following conjecture.
\begin{conjecture}
  Let $\Lambda$ be an Iwanaga-Gorenstein artin algebra. Then every self-orthogonal $\Lambda$-module has finite projective dimension.
\end{conjecture}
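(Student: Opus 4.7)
The plan is to imitate the representation-finite argument of Corollary \ref{cor:IG} with Cohen--Macaulay approximations in place of Bongartz completion. Fix an Iwanaga--Gorenstein artin algebra $\Lambda$ of dimension $d$, so that $\GP\Lambda = {}^\perp \Lambda$ is functorially finite, a module has finite projective dimension if and only if it has finite injective dimension (both bounded by $d$), and every $M \in \mod\Lambda$ admits a Cohen--Macaulay approximation $0 \to Y_M \to X_M \to M \to 0$ with $X_M \in \GP\Lambda$ and $\pd Y_M \leq d$. Let $T$ be a self-orthogonal $\Lambda$-module; the goal is to prove $\pd T < \infty$.

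The first step is to reduce to the Gorenstein-projective summand $X_T$ of the approximation of $T$. Since $\pd Y_T < \infty$, the finiteness of $\pd T$ is equivalent to that of $\pd X_T$. Moreover, ${}^\perp T$ is resolving and contains the projectives, so by induction on projective dimension one obtains $Y_T \in {}^\perp T$, and hence $X_T \in {}^\perp T$ from the sequence. Because a Gorenstein-projective module of finite projective dimension is automatically projective, it suffices to show that $X_T$ itself is projective.

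The second step is to collect Ext-vanishing conditions on $X_T$. Applying $\Hom_\Lambda(-, T)$ to the approximation and using $\Ext_\Lambda^{>0}(T, T) = 0$ yields $\Ext_\Lambda^i(X_T, T) \iso \Ext_\Lambda^i(Y_T, T)$ for all $i \geq 1$, and the right-hand side vanishes for $i > \id Y_T$, in particular for $i > d$. I would then attempt to transfer this information to the opposite side by invoking Wakamatsu duality $\Hom_\Lambda(-, \Lambda)$ (Lemma \ref{lem:wak-duality} applied to the Wakamatsu tilting module $\Lambda_\Lambda$), giving $X_T^\ast \in \GP\Lambda^{\op}$ together with dual constraints involving the endomorphism algebra $\Gamma := \End_\Lambda(T)$ and $T$ as a left $\Gamma$-module. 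The aim is to combine these symmetric constraints with the self-orthogonality of $T$ on both sides to force $X_T$ to be projective.

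The main obstacle is precisely this final step: deducing projectivity of $X_T \in \GP\Lambda$ from partial Ext-vanishing against $T$ amounts essentially to a version of the Auslander--Reiten conjecture restricted to the Gorenstein-projective class, and no general technique is known by which self-orthogonality together with the Iwanaga--Gorenstein hypothesis alone forces a Gorenstein-projective module to be projective. A complete proof would presumably require a new homological input --- perhaps a quantitative form of Wakamatsu duality, or an analysis of $\Gamma$ that propagates the IG property across the duality --- and this is consistent with the conjecture being stated as open.
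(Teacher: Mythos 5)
This statement is a \emph{conjecture} in the paper, not a theorem: the paper offers no proof, only a surrounding remark relating it to the Tachikawa conjecture and the Gorenstein-projective conjecture. You correctly recognized this and honestly flagged that your sketch does not close, which is the right conclusion. Your identification of the obstruction --- that the problem ``amounts essentially to a version of the Auslander--Reiten conjecture restricted to the Gorenstein-projective class'' --- matches the paper's own remark that the Gorenstein-projective conjecture is exactly the special case where $T$ itself is Gorenstein-projective.

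That said, there are real gaps in the sketch even before the admitted obstacle. The claim that $Y_T \in {}^\perp T$ ``by induction on projective dimension'' because ${}^\perp T$ is resolving does not work: a resolving subcategory is closed under \emph{kernels of surjections} and contains the projectives, but it is not closed under cokernels of injections, so it need not contain all modules of finite projective dimension. Concretely, from $0 \to \Omega Y \to P \to Y \to 0$ with $\Omega Y, P \in {}^\perp T$ one cannot deduce $Y \in {}^\perp T$; the map $\Hom_\Lambda(P, T) \to \Hom_\Lambda(\Omega Y, T)$ may fail to be surjective, leaving $\Ext^1_\Lambda(Y, T) \neq 0$. So $X_T \in {}^\perp T$ is not established. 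A second, more fundamental issue is that even granting a reduction to the Gorenstein-projective piece $X_T$, the Gorenstein-projective conjecture would require $X_T$ to be \emph{self-orthogonal}, and the self-orthogonality of $T$ does not transfer to $X_T$ in any obvious way: $\Ext^{>0}_\Lambda(T,T)=0$ gives you $\Ext^i_\Lambda(X_T, T) \cong \Ext^i_\Lambda(Y_T, T)$ for $i \geq 1$, but says nothing directly about $\Ext^i_\Lambda(X_T, X_T)$. So the reduction is not merely ``open at the last step'' --- it is not a valid reduction to the Gorenstein-projective conjecture at all.
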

\begin{remark}
  In relation to this conjecture, we should note two further homological conjectures.
  \begin{enumerate}
    \item The \emph{Tachikawa conjecture}: If $\Lambda$ is self-injective, then every self-orthogonal $\Lambda$-module is projective. This is equivalent to the above conjecture for the case where $\Lambda$ is self-injective, since a module is projective if and only if it has finite projective dimension.
    \item The \emph{Gorenstein-projective conjecture} \cite{gpc}: Every self-orthogonal Gorenstein-projective $\Lambda$-module is projective. If we consider an Iwanaga-Gorenstein algebra $\Lambda$, then this conjecture corresponds precisely to the case where only Gorenstein-projective modules are considered in the above question.
  \end{enumerate}
\end{remark}
\begin{conjecture}\label{conj:IGWTC}
  Let $\Lambda$ be an Iwanaga-Gorenstein artin algebra. Then any Wakamatsu tilting $\Lambda$-module is tilting.
\end{conjecture}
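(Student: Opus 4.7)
The plan is to extend the strategy of Corollary \ref{cor:IG} to the representation-infinite setting. First, since $\Lambda$ is Iwanaga-Gorenstein, $D\Lambda$ is a tilting module of finite projective dimension $d := \pd (D\Lambda)_\Lambda$. For a Wakamatsu tilting module $T$, the containment $D\Lambda \in \YY_T$ furnishes a $T$-resolution $\cdots \to T_1 \to T_0 \to D\Lambda \to 0$ with all syzygies $K_i \in T^\perp$. The key observation is that the truncation argument in the proof of Theorem \ref{thm:tilt-up-closed} does not use representation-finiteness: it only uses $T_i \in \add T \subseteq T^\perp$, $K_i \in T^\perp$, and $\pd D\Lambda \leq d$, so the iterated dimension shift
\[
  \Ext^1_\Lambda(K_d, K_{d+1}) \iso \Ext^{d+1}_\Lambda(D\Lambda, K_{d+1}) = 0
\]
splits off the $d$-th syzygy and yields a finite $T$-resolution $0 \to K_d \to T_{d-1} \to \cdots \to T_0 \to D\Lambda \to 0$ with $K_d \in \add T$. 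Equivalently, in the exact category $\YY_T$ with progenerator $T$ and injective cogenerator $D\Lambda$, the object $D\Lambda$ has projective dimension $\leq d$.

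Applying $\Hom_\Lambda(T, -)$ to this finite resolution preserves exactness (since $T$ is self-orthogonal, $D\Lambda$ is injective, and every $K_i$ lies in $T^\perp$), producing a finite projective resolution of $DT$ as a right $\Gamma$-module with $\Gamma := \End_\Lambda(T)$, so $\id_\Gamma T \leq d$. The plan is now to transfer this finite injective dimension on the $\Gamma$-side back to finite projective dimension of $T_\Lambda$. Once $\pd T_\Lambda < \infty$ is secured, Proposition \ref{prop:tilt-rels}(3) concludes that $T$ is tilting, modulo the auxiliary verification that $T$ is projectively Wakamatsu tilting (equivalently, $T^\perp \subseteq \YY_T$), which should follow by coresolving any $X \in T^\perp$ injectively by $\add D\Lambda$ inside $T^\perp$ and then unfolding each such $D\Lambda$ via the finite $T$-resolution above, using that $\YY_T$ is closed under kernels of surjections and cokernels of injections (Proposition \ref{prop:4cats-projinj}(4)).

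The main obstacle is the transfer step, which is essentially a form of the Gorenstein Symmetry Conjecture for the exact category $\YY_T$: we have established $\pd_{\YY_T} D\Lambda \leq d$ and need $\id_{\YY_T} T < \infty$. In the representation-finite case this is Lemma \ref{lem:GSC}, but the general case is delicate because $\Gamma$ need not itself be Iwanaga-Gorenstein, so the symmetry cannot be purely formal. A promising angle is to iterate Wakamatsu duality (Lemma \ref{lem:wak-duality}) between $\XX_{T_\Lambda}$ and $\XX_{_\Gamma T}$ in order to transport resolutions back and forth, combined with applying the same truncation to the $T$-coresolution of $\Lambda \in \XX_T$ (using $\id \Lambda_\Lambda < \infty$) on the dual side. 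I expect, however, that the genuine difficulty of the conjecture resides precisely in this symmetry step, which is why the statement appears harder than its representation-finite corollary.
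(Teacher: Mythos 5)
The statement you are attempting to prove is labeled as a \emph{conjecture} in the paper: there is no proof to compare against. The paper shows only that Conjecture~\ref{conj:IGWTC} follows from the Wakamatsu tilting conjecture (via Proposition~\ref{prop:wtc-tilt-up} and the corollary after it). Your preparatory steps are correct and essentially re-derive that reduction: since $T$ is Wakamatsu tilting, $D\Lambda\in\YY_T$ holds by Proposition~\ref{prop:wak-dual} with no representation-finiteness needed, and $\pd(D\Lambda)_\Lambda<\infty$ since $\Lambda$ is Iwanaga-Gorenstein, so the truncation argument from Theorem~\ref{thm:tilt-up-closed} carries over and gives $\pd_{\YY_T}D\Lambda\le d$; applying $\Hom_\Lambda(T,-)$ then yields the finite injective dimension of $T$ on the $\Gamma$-side. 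But the ``transfer step'' you isolate is exactly the Gorenstein symmetry conjecture for exact categories with a progenerator and an injective cogenerator, and the paper's Lemma~\ref{lem:wtc-gsc} shows that this statement is \emph{equivalent} to the Wakamatsu tilting conjecture. Iterating Wakamatsu duality (Lemma~\ref{lem:wak-duality}), as you propose, would only move you between equivalent formulations of the same open problem; it cannot close the gap.

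There is also a second gap you underrate. Even granting $\pd T_\Lambda<\infty$, invoking Proposition~\ref{prop:tilt-rels}(3) requires first knowing that $T$ is projectively Wakamatsu tilting, i.e., $T^\perp=\YY_T$; and for a self-orthogonal module of finite projective dimension, ``projectively Wakamatsu tilting'' is equivalent to ``tilting'' by that same proposition, so the ``auxiliary verification'' is again precisely the Wakamatsu tilting conjecture, not a routine check. Your sketched argument for it also misquotes the relevant closure property: by Proposition~\ref{prop:4cats-projinj}(4), $\YY_T$ is closed under extensions, direct summands, and \emph{cokernels of injections}, not kernels of surjections; and even with the correct properties, taking an injective coresolution $0\to X\to I^0\to I^1\to\cdots$ of $X\in T^\perp$ and unfolding each $I^j$ by its finite $T$-resolution only lets you propagate membership in $\YY_T$ forward along cosyzygies, never backward to $X$. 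So both ``remaining'' steps of your plan coincide with the open conjecture that the paper reduces to.
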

We remark that, if $\Lambda$ is Iwanaga-Gorenstein, then tilting and cotilting modules coincide. This can be seen as follows. Let $T$ be a cotilting module, so $\id T_\Lambda$ is finite. Since $\Lambda$ is Iwanaga-Gorenstein, this implies that $\pd T_\Lambda$ is finite and it then follows from \cite[Proposition 4.4]{MR} that $T$ is tilting.

In the rest of this section, we consider whether an analogous version of Theorem \ref{thm:tilt-up-closed} holds for representation-infinite algebras.
If $\Lambda$ is representation-infinite, then $T \geq U$ in $\wtilt\Lambda$ does not imply that $T \in \XX_U$ or $U \in \YY_T$. As such, it is reasonable to pose the following modified version:
\begin{conjecture}\label{conj:tilt-up-closed}
  Let $\Lambda$ be an artin algebra, and let $T, U \in \wtilt\Lambda$ such that $T \in \XX_U$ and $U \in \YY_T$. If $U$ is tilting, then so is $T$.
\end{conjecture}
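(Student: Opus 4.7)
The strategy is to adapt the proof of Theorem~\ref{thm:tilt-up-closed} to the representation-infinite setting by working inside the exact category $\EE := T^\perp \cap {}^\perp U$ and pinpointing the step where representation-finiteness was essential. Setting $d := \pd U_\Lambda < \infty$ and using $U \in \YY_T$, the splitting argument of Theorem~\ref{thm:tilt-up-closed} -- which relies only on dimension shifting in $T^\perp$ -- yields a finite $T$-resolution
\[
  0 \to T_d \to T_{d-1} \to \cdots \to T_0 \to U \to 0
\]
with $T_i \in \add T$ and all intermediate images in $T^\perp$. The hypothesis $T \in \XX_U \subseteq {}^\perp U$, combined with the resolvingness of ${}^\perp U$, places these images in $\EE$. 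A preliminary step -- which in the representation-finite case follows from Theorem~\ref{thm:main2} and its dual but must be checked directly here -- is to show that $T$ is an $\Ext$-progenerator and $U$ an $\Ext$-injective cogenerator of $\EE$; this amounts to setting up Wakamatsu-style approximations inside $\EE$ using the finite $T$-resolution of $U$ together with Wakamatsu's approximation lemma.

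The central and hardest step is then to deduce $\id_\EE T < \infty$ from $\pd_\EE U \leq d$, yielding a finite $U$-coresolution $0 \to T \to U^0 \to \cdots \to U^m \to 0$ in $\EE$. In the representation-finite case this is Lemma~\ref{lem:GSC}; without any finiteness of $\ind\EE$, a new Gorenstein-symmetry input is required. My attempt would be to apply the Wakamatsu duality of Lemma~\ref{lem:wak-duality}: writing $\Gamma := \End_\Lambda(T)$, the duality $\Hom_\Lambda(-, T) \colon \XX_T \isoto \XX_{_\Gamma T}$ exchanges progenerator and injective-cogenerator structures, so that finiteness of $\pd_\EE U$ should dualize into a finiteness statement on the $\Gamma^{\op}$-side, from which $\id_\EE T < \infty$ follows by pulling back. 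The tilting structure on $U$ enters decisively through its induced derived equivalence between $D^b(\mod\Lambda)$ and $D^b(\mod\End_\Lambda(U))$, which supplies the global Ext bounds needed to make the symmetry go through.

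Granted a finite $U$-coresolution of $T$, the bound $\pd U^i_\Lambda \leq d$ on each summand gives $\pd T_\Lambda < \infty$. To conclude that $T$ is tilting -- rather than invoke the Wakamatsu Tilting Conjecture directly -- we exploit the finite $U$-coresolution $0 \to \Lambda \to V^0 \to \cdots \to V^{d'} \to 0$ of $\Lambda$ (which exists since $U$ is tilting), together with the first-reduction finite $T$-resolution of each $V^i \in \add U$. Splicing via a Cartan-Eilenberg-style bicomplex produces a bounded complex of $\add T$-modules quasi-isomorphic to $\Lambda$; a suitable truncation and Schanuel-style adjustment extract from it a finite $T$-coresolution $0 \to \Lambda \to T^0 \to \cdots \to T^s \to 0$ with $T^i \in \add T$. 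Together with self-orthogonality and $\pd T_\Lambda < \infty$, this verifies the three defining conditions of a tilting module.

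The decisive obstacle is the central Gorenstein-symmetry step, which appears to lie as deep as the Wakamatsu Tilting Conjecture itself: a purely formal Ext-shifting argument does not seem sufficient, and the tilting structure on $U$ will need to be used in a genuinely nontrivial way -- likely through its derived equivalence -- in order to transport finite-projective-dimension information about $U$ into finite-injective-dimension information about $T$.
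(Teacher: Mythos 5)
This statement is labeled a \emph{conjecture} in the paper and is not proved there; the closest the paper comes is Proposition~\ref{prop:wtc-tilt-up}, which shows that Conjecture~\ref{conj:tilt-up-closed} \emph{follows from} the Wakamatsu Tilting Conjecture (WTC) via Lemma~\ref{lem:wtc-gsc}, the equivalence of WTC with a Gorenstein-symmetry statement for exact categories with a progenerator and an injective cogenerator. Your outline correctly rediscovers that reduction -- set up an exact category $\EE$ with progenerator $T$ and injective cogenerator $U$, show $\pd_\EE U < \infty$, then try to deduce $\id_\EE T < \infty$ -- and you also correctly diagnose that the missing Gorenstein-symmetry step ``appears to lie as deep as the Wakamatsu Tilting Conjecture itself.'' That diagnosis is exactly right: Lemma~\ref{lem:wtc-gsc} shows the two statements are equivalent. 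So what you have is a reduction to WTC, not a proof; the Wakamatsu-duality/derived-equivalence route you sketch to close the gap is left entirely at the level of a hope (``should dualize'', ``follows by pulling back'') with no actual argument.

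There are also two concrete missteps. First, you take $\EE := T^\perp \cap {}^\perp U$, whereas Proposition~\ref{prop:wtc-tilt-up} deliberately uses $\EE := \YY_T \cap \XX_U$ in the representation-infinite case. Without $\YY_T = T^\perp$ (which requires $T$ to be projectively Wakamatsu tilting, guaranteed only under the finiteness hypothesis of Theorem~\ref{thm:main2}), a right $\add T$-approximation of an object of $T^\perp$ need not be surjective, so $T$ need not be an $\Ext$-progenerator of $T^\perp \cap {}^\perp U$; Wakamatsu's lemma controls the kernel of a surjective approximation, not surjectivity. Using $\YY_T \cap \XX_U$ avoids this, because the progenerator/cogenerator status there follows from the definitions together with the closure properties in Proposition~\ref{prop:4cats-projinj}. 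Second, your closing Cartan--Eilenberg splicing, intended to avoid invoking WTC at the end, does not produce a $T$-coresolution of $\Lambda$: totalizing a bicomplex built from left $T$-resolutions of the $V^i$ and the right $U$-coresolution of $\Lambda$ yields a bounded complex of $\add T$-modules quasi-isomorphic to $\Lambda$, but that complex has nonzero terms in \emph{negative} degrees, and a ``Schanuel-style adjustment'' does not in general convert it into an acyclic coresolution $0 \to \Lambda \to T^0 \to \cdots \to T^s \to 0$ with all terms in $\add T$. The paper does not attempt this and simply invokes WTC at that point.
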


We will show that this conjecture is implied by the \emph{Wakamatsu tilting conjecture}: every Wakamatsu tilting module of finite projective dimension is tilting.
To do this, we will use the following interpretation of the Wakamatsu tilting conjecture in terms of the Gorenstein symmetry conjecture in exact categories.
\begin{lemma}\label{lem:wtc-gsc}
  The Wakamatsu tilting conjecture is equivalent to the following conjecture: Let $\EE$ be a Hom-finite Krull-Schmidt exact $R$-category with a progenerator $P$ and an injective cogenerator $I$. If $\pd_\EE I$ is finite, then $\id_\EE P$ is also finite.
\end{lemma}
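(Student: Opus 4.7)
The plan is to prove the equivalence by two implications, using the comparison between Wakamatsu tilting modules and the exact categories $\XX_T$ that they determine.

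First I would show the implication (exact GSC) $\Rightarrow$ (Wakamatsu tilting conjecture). Given a Wakamatsu tilting $\Lambda$-module $T$ with $\pd T_\Lambda$ finite, apply the exact-category GSC to $\EE := \XX_T$. By Proposition \ref{prop:4cats-projinj} together with the definition of Wakamatsu tilting, $\XX_T$ is a resolving subcategory of $\mod\Lambda$ with $\Ext$-progenerator $\Lambda$ and $\Ext$-injective cogenerator $T$, and it inherits a Hom-finite Krull--Schmidt exact $R$-category structure from $\mod\Lambda$. Since $\XX_T$ is resolving, $\Ext_\EE^i(-,-)$ coincides with $\Ext_\Lambda^i(-,-)$ on $\XX_T$, so $\pd_\EE T = \pd T_\Lambda<\infty$. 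The exact GSC then yields $\id_\EE \Lambda<\infty$, i.e.\ a finite exact sequence
\[
  \begin{tikzcd}
    0 \rar & \Lambda \rar & T^0 \rar & T^1 \rar & \cdots \rar & T^n \rar & 0
  \end{tikzcd}
\]
in $\mod\Lambda$ with $T^j \in \add T$, which supplies the missing third clause in the definition of a tilting module. Combined with self-orthogonality and finiteness of $\pd T_\Lambda$, this makes $T$ tilting.

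For the converse (Wakamatsu tilting conjecture) $\Rightarrow$ (exact GSC), let $\EE$ be as in the statement, set $\Lambda := \End_\EE(P)$, which is an artin $R$-algebra by Hom-finiteness, and consider the Yoneda embedding $\iota := \EE(P,-) \colon \EE \hookrightarrow \mod\Lambda$. By \cite{eno-wak}, $\iota$ is a fully faithful exact functor, sending $P$ to $\Lambda$ and identifying $\EE$ (as an exact category) with $\XX_T \subseteq \mod\Lambda$, where $T := \iota(I)$ is a Wakamatsu tilting $\Lambda$-module whose role as injective cogenerator of $\EE$ corresponds to the $\Ext$-injective cogenerator $T$ of $\XX_T$. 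Since $P$ is projective in $\EE$, the functor $\iota$ is exact, so applying it to a projective resolution of $I$ in $\EE$ of length $\pd_\EE I$ produces a projective resolution of $T$ in $\mod\Lambda$ of the same length; hence $\pd T_\Lambda<\infty$. Assuming the Wakamatsu tilting conjecture, $T$ is tilting, so there exists a finite coresolution $0 \to \Lambda \to T^0 \to \cdots \to T^n \to 0$ in $\mod\Lambda$ with $T^j \in \add T$. The kernels of this sequence automatically lie in $\XX_T$, because $\XX_T$ is resolving and hence closed under kernels of deflations. Transporting the whole sequence back along the equivalence $\EE \simeq \XX_T$ yields a finite $\add I$-injective coresolution of $P$ in $\EE$, so $\id_\EE P < \infty$.

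The main obstacle is the second direction, and specifically the identification of the essential image of $\iota$ with $\XX_T$ together with the matching of exact structures — this is what lets us ensure that the intermediate kernels in the tilting coresolution of $\Lambda$ actually come from $\EE$. I expect to handle this by citing the author's prior work \cite{eno-wak}; if a more self-contained argument is wanted, one can instead build an $I$-injective coresolution of $P$ in $\EE$ (using that $I$ is an injective cogenerator, giving enough injectives), apply $\iota$ to obtain an infinite $\add T$-coresolution of $\Lambda$ in $\mod\Lambda$ with kernels in the image, and then use that $T$ is tilting together with a dimension-shift argument (via $\Ext_\Lambda^{\pd T+1}(T,-)=0$) to show that this coresolution must split off after finitely many steps, thereby giving $\id_\EE P<\infty$ directly.
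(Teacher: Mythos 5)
Your proof takes essentially the same route as the paper: the forward direction is argued by applying the exact-category statement to $\XX_T$ (using that $\XX_T$ is resolving with progenerator $\Lambda$ and injective cogenerator $T$, so $\pd_{\XX_T} T = \pd T_\Lambda$), and the converse uses the functor $\EE(P,-)$ and the cited theorem from \cite{eno-wak} together with the Wakamatsu tilting conjecture to get a finite $\add T$-coresolution of $\Lambda$ and then pull it back to $\EE$.

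One imprecision worth flagging in the converse direction: you claim that $\iota = \EE(P,-)$ identifies $\EE$ with $\XX_T \subseteq \mod\Lambda$. The result from \cite{eno-wak} invoked in the paper only gives that $\iota$ is an exact equivalence of $\EE$ onto its image $F(\EE)$, and that $F(\EE)$ is a \emph{resolving} subcategory of $\mod\Gamma$; it does not assert $F(\EE) = \XX_T$ in general (only $F(\EE) \subseteq \XX_T$). If the image were strictly smaller than $\XX_T$, knowing the kernels of the coresolution lie in $\XX_T$ would not suffice to transport them back along the equivalence. The fix is simply to use what \cite{eno-wak} actually provides: since $F(\EE)$ itself is resolving and contains both $\Gamma = FP$ and $\add FI$, the kernels of the coresolution of $\Gamma$ by $\add FI$ lie in $F(\EE)$, and can therefore be pulled back to conflations in $\EE$. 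This is exactly how the paper phrases it. Your argument survives this correction since you only ever use the resolving property, not the precise identification with $\XX_T$; you should adjust the wording accordingly.
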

\begin{proof}
  Suppose that the latter conjecture holds and let $T$ be a Wakamatsu tilting $\Lambda$-module of finite projective dimension. Consider then the exact category $\XX_T$, which has a progenerator $\Lambda$ and an injective cogenerator $T$. Since $\XX_T$ is resolving, we conclude that $\pd_\EE T = \pd T_\Lambda$ is finite. Furthermore, $\id_\EE \Lambda$ is finite, due to the initial assumption.
  This implies that $\Lambda$ has a finite $T$-coresolution, so $T$ is a tilting $\Lambda$-module.

  Conversely, assume the Wakamatsu tilting conjecture, and let $\EE$ be an exact category satisfying the stated conditions.
  Let $\Gamma := \End_\EE(T)$, and consider the functor $F := \EE(P, -) \colon \EE \to \mod\Gamma$. Then, according to \cite[Theorem 3.3]{eno-wak}, $FI$ is a Wakamatsu tilting $\Gamma$-module, $F$ induces an exact equivalence $\EE \equi F(\EE)$, and $F(\EE)$ is a resolving subcategory of $\mod\Gamma$.

  Suppose that $\pd_\EE I$ is finite. Since $I$ has a finite $P$-resolution, applying $F$ to it yields that $FI$ has a finite $\Gamma$-resolution, that is, $FI$ has finite projective dimension. Therefore, the Wakamatsu tilting conjecture implies that $FI$ is a tilting $\Gamma$-module.
  Consequently, since $\Gamma_\Gamma = FP$, there is an exact sequence in $\mod\Gamma$ of the form
  \[
    \begin{tikzcd}
      0 \rar & FP \rar["Ff^0"] & FI^0 \rar["Ff^1"] & FI^1 \rar["Ff^2"] & \cdots \rar["Ff^d"] & FI^d \rar & 0
    \end{tikzcd}
  \]
  with $I^i \in \add I$ (since $F$ is fully faithful on $\EE$). Moreover, since $F(\EE)$ is a resolving subcategory of $\mod\Gamma$, it follows that $\im Ff^i \in F(\EE)$ for all $i \geq 0$.
  Therefore, the above exact sequence consists of short exact sequences in $F(\EE)$. In addition, since $F$ is an exact equivalence between $\EE$ and $F(\EE)$, there is also an exact sequence
  \[
    \begin{tikzcd}
      0 \rar & P \rar["f^0"] & I^0 \rar["f^1"] & I^1 \rar["f^2"] & \cdots \rar["f^d"] & I^d \rar & 0
    \end{tikzcd}
  \]
  in $\EE$ consisting of conflations in $\EE$. Thus, we can conclude that $\id_\EE P$ is finite.
\end{proof}
Now we can state the relation between the Wakamatsu tilting conjecture and our conjecture.
\begin{proposition}\label{prop:wtc-tilt-up}
  The Wakamatsu tilting conjecture implies Conjecture \ref{conj:tilt-up-closed}.
\end{proposition}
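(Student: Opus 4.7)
The plan is to set up an exact subcategory of $\mod\Lambda$ in which $T$ plays the role of progenerator and $U$ plays the role of injective cogenerator, and then feed this into the exact-category reformulation of the Wakamatsu tilting conjecture given by Lemma \ref{lem:wtc-gsc}.

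Concretely, I would work with $\EE := \YY_T \cap \XX_U$. Since $\YY_T$ and $\XX_U$ are both closed under extensions and direct summands, $\EE$ is an extension-closed Hom-finite Krull-Schmidt subcategory of $\mod\Lambda$, and it contains both $T$ and $U$ thanks to the hypotheses $T \in \XX_U$ and $U \in \YY_T$. To check that $T$ is an $\Ext$-progenerator of $\EE$, take any $X \in \EE$ and use the $\Ext$-progenerator property of $T$ in $\YY_T$ to obtain a short exact sequence $0 \to X' \to T' \to X \to 0$ with $T' \in \add T$ and $X' \in \YY_T$; since $\XX_U$ is closed under kernels of surjections (Proposition \ref{prop:4cats-projinj}(2)) and $T', X \in \XX_U$, we also get $X' \in \XX_U$, so $X' \in \EE$. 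Dually, using that $\YY_T$ is closed under cokernels of injections, $U$ is an $\Ext$-injective cogenerator of $\EE$.

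Next I would verify that $\pd_\EE U$ is finite. Starting from a $T$-resolution $\cdots \to T_1 \xrightarrow{g_1} T_0 \xrightarrow{g_0} U \to 0$ coming from $U \in \YY_T$, all syzygies $K_i := \ker g_{i-1}$ lie in $T^\perp$, and a simple induction using that $\XX_U$ is closed under kernels of surjections (together with $T_i \in \add T \subseteq \XX_U$ and $U \in \XX_U$) shows that each $K_i$ also lies in $\XX_U$, so $K_i \in \EE$ for every $i$. Setting $d := \pd U_\Lambda < \infty$, dimension-shifting along the short exact sequences $0 \to K_{i+1} \to T_i \to K_i \to 0$ (valid since $\Ext^{>0}_\Lambda(T_i, K_{d+1}) = 0$) yields
\[
\Ext^1_\Lambda(K_d, K_{d+1}) \iso \Ext^{d+1}_\Lambda(U, K_{d+1}) = 0,
\]
so the conflation $0 \to K_{d+1} \to T_d \to K_d \to 0$ splits, forcing $K_d \in \add T$. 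We thereby obtain a length-$d$ resolution $0 \to K_d \to T_{d-1} \to \cdots \to T_0 \to U \to 0$ of $U$ by $\add T$ entirely inside $\EE$, so $\pd_\EE U \leq d$.

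Lemma \ref{lem:wtc-gsc} now applies to $\EE$: under the Wakamatsu tilting conjecture, $\id_\EE T$ is finite, so there is an exact sequence $0 \to T \to U^0 \to \cdots \to U^{d'} \to 0$ in $\EE$ with $U^i \in \add U$. Since each $U^i$ has finite projective dimension over $\Lambda$ (as $U$ is tilting), this finite exact sequence forces $\pd T_\Lambda < \infty$. Applying the Wakamatsu tilting conjecture once more to the Wakamatsu tilting module $T$ of finite projective dimension shows that $T$ is tilting, completing the proof. The main subtlety lies in the first step — correctly choosing $\EE$ and checking the progenerator/injective cogenerator roles; once that is in place, the dimension-shifting computation is routine and two invocations of Lemma \ref{lem:wtc-gsc} finish the argument.
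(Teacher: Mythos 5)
Your proof is correct and follows essentially the same route as the paper: form the exact category $\EE = \YY_T \cap \XX_U$, establish that $T$ and $U$ are its $\Ext$-progenerator and $\Ext$-injective cogenerator, show $\pd_\EE U$ is finite via the dimension-shift/splitting argument, invoke Lemma \ref{lem:wtc-gsc} to get $\id_\EE T < \infty$, deduce $\pd T_\Lambda < \infty$, and apply the Wakamatsu tilting conjecture to conclude $T$ is tilting. You have actually supplied the details that the paper compresses into ``the same argument as in Theorem \ref{thm:tilt-up-closed}'' (which there used $T^\perp \cap {}^\perp U$, valid only in the representation-finite setting), correctly substituting $\YY_T \cap \XX_U$ together with Proposition \ref{prop:4cats-projinj} in the general case, and you have also silently corrected a small typo in the paper (``$\pd_\EE T$'' should read ``$\pd_\EE U$'').
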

\begin{proof}
  Suppose that the Wakamatsu tilting conjecture is true.
  Consider the category $\EE := \YY_T \cap \XX_U$. The same argument as in Theorem \ref{thm:tilt-up-closed} shows that $\EE$ has an $\Ext$-progenerator $T$ and an $\Ext$-injective cogenerator $U$.
  In addition, since $\pd U_\Lambda$ is finite, the same argument as in Theorem \ref{thm:tilt-up-closed} shows that $U$ has a finite $T$-resolution in $\EE$.
  Therefore, $\pd_\EE T$ is finite.
  Lemma \ref{lem:wtc-gsc} then shows that $\id_\EE T$ is finite.
  Thus, there exists an exact sequence in $\mod\Lambda$
  \[
    \begin{tikzcd}
      0 \rar & T \rar & U^0 \rar & U^1 \rar & \cdots \rar & U^d \rar & 0
    \end{tikzcd}
  \]
  with $U^i \in \add U$ for all $i \geq 0$. As $\pd U_\Lambda$ is finite, we deduce that $\pd T_\Lambda$ is finite as well. Consequently, $T$ is a Wakamatsu tilting module of finite projective dimension. Therefore, the Wakamatsu tilting conjecture implies that $T$ is tilting.
\end{proof}

As a consequence of this, we can see that Conjecture \ref{conj:IGWTC} follows from the Wakamatsu tilting conjecture.
\begin{corollary}
  The Wakamatsu tilting conjecture implies Conjecture \ref{conj:IGWTC}.
\end{corollary}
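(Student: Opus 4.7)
The plan is to derive Conjecture \ref{conj:IGWTC} as an immediate application of Proposition \ref{prop:wtc-tilt-up} by specializing the pair appearing there to $U = D\Lambda$.

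Suppose the Wakamatsu tilting conjecture holds, let $\Lambda$ be an Iwanaga-Gorenstein artin algebra, and let $T \in \wtilt\Lambda$; the goal is to show that $T \in \tilt\Lambda$. First I would verify that $D\Lambda$ itself is tilting under the IG hypothesis: it has finite projective dimension by the Iwanaga-Gorenstein assumption, it is self-orthogonal since it is injective, and the required $\add(D\Lambda)$-coresolution of $\Lambda$ exists because $\id \Lambda_\Lambda$ is finite. In particular $D\Lambda \in \wtilt\Lambda$. Next, since $T$ is Wakamatsu tilting, Proposition \ref{prop:wak-dual} gives $D\Lambda \in \YY_T$. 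Finally, $D\Lambda$ is an injective cogenerator of $\mod\Lambda$, so ${}^\perp(D\Lambda) = \mod\Lambda$ and $\add(D\Lambda) = \inj\Lambda$; consequently any injective coresolution of $T$ qualifies as a $(D\Lambda)$-coresolution in the sense of Definition \ref{def:subcat}, so $T \in \XX_{D\Lambda}$.

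At this point all the hypotheses of Conjecture \ref{conj:tilt-up-closed} are in place for the pair $(T, D\Lambda)$: both modules lie in $\wtilt\Lambda$, we have $T \in \XX_{D\Lambda}$ and $D\Lambda \in \YY_T$, and $D\Lambda$ is tilting. By Proposition \ref{prop:wtc-tilt-up}, the assumed WTC implies Conjecture \ref{conj:tilt-up-closed}, whose conclusion then yields that $T$ is tilting, completing the argument.

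No serious obstacle arises, since the heavy lifting is already packaged into Proposition \ref{prop:wtc-tilt-up}; the only points to verify are the essentially trivial facts that $D\Lambda$ is tilting over an Iwanaga-Gorenstein algebra, that $D\Lambda \in \YY_T$ is precisely the Wakamatsu tilting condition (via Proposition \ref{prop:wak-dual}), and that $\XX_{D\Lambda}$ coincides with $\mod\Lambda$ because $D\Lambda$ is an injective cogenerator.
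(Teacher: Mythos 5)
Your proposal is correct and takes essentially the same route as the paper: both specialize Conjecture \ref{conj:tilt-up-closed} (via Proposition \ref{prop:wtc-tilt-up}) to the pair $(T, D\Lambda)$, using Proposition \ref{prop:wak-dual} for $D\Lambda \in \YY_T$, the fact that $D\Lambda$ is tilting over an Iwanaga--Gorenstein algebra, and the (easy) observation that $T \in \XX_{D\Lambda}$. You simply spell out the verification that $\XX_{D\Lambda} = \mod\Lambda$ in a bit more detail than the paper, which just calls it ``clear.''
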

\begin{proof}
  Let $\Lambda$ be an Iwanaga-Gorenstein algebra and $T$ a Wakamatsu tilting $\Lambda$-module. It is clear that $T \in \XX_{D\Lambda}$, and since $T$ is Wakamatsu tilting, $D\Lambda \in \YY_T$ (Proposition \ref{prop:wak-dual}).
  Moreover, $D\Lambda$ is tilting since $\Lambda$ is Iwanaga-Gorenstein.
  Therefore, Proposition \ref{prop:wtc-tilt-up} shows that the Wakamatsu tilting conjecture implies that $T$ is tilting.
\end{proof}

\section{Conjectures on self-orthogonal modules}\label{sec:conj}
In Theorem \ref{thm:main2}, we provided several characterizations of Wakamatsu tilting modules under the assumption that $\ind (T^\perp)$ is finite.
For a general $\Lambda$, some of these conditions are not equivalent.
Indeed, as mentioned in Example \ref{ex:weakly-gor}, there is a Wakamatsu tilting module which is not projectively Wakamatsu tilting, and also Example \ref{ex:mso-not-wak} shows that there is a maximal self-orthogonal module which is not Wakamatsu tilting.

Nevertheless, several open conjectures exist regarding the relationship between self-orthogonal modules and Wakamatsu tilting modules, as follows.
\begin{conjecture}[Boundedness conjecture]\label{conj:bounded}
  If $U$ is self-orthogonal, then $|U| \leq |\Lambda|$ holds.
\end{conjecture}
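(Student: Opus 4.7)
The plan is to try to extend the strategy of Lemma \ref{lem:wak-number} to the general situation. In the finite-cover case, the proof went by constructing a Bongartz completion $T$ of $U$, observing that $U^{\perp} = T^{\perp}$ has $T$ as Ext-progenerator and $D\Lambda$ as Ext-injective cogenerator, and then applying Lemma \ref{lem:exact-cat} to conclude $|T| = |\Lambda|$ together with $U \in \add T$. Without a finite cover of $U^{\perp}$, Theorem \ref{thm:main} is unavailable, and so $T$ cannot be constructed in this way. The first step I would take is therefore to seek a substitute for the Bongartz completion: perhaps work instead with an infinite-rank Ext-progenerator of $U^{\perp}$ within a suitable larger module category (e.g., $\Mod\Lambda$), and then descend by finiteness of $|\Lambda|$.

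A second, more direct approach is a Grothendieck-group argument. Let $U = U_1 \oplus \cdots \oplus U_k$ be a decomposition into pairwise non-isomorphic indecomposables. The goal would be to produce an abelian group $G$ of rank $|\Lambda|$ together with a homomorphism $\mod\Lambda \to G$ under which $[U_1], \ldots, [U_k]$ are linearly independent. For $U$ of finite projective dimension this is classical, via the pairing with $K_0(\proj\Lambda)$ using $\chi(U_i, U_j) = \sum_n (-1)^n [\Ext^n_\Lambda(U_i, U_j)]$; self-orthogonality collapses this sum to $[\Hom_\Lambda(U_i, U_j)]$, which yields linear independence. The hard case is infinite projective dimension, where the Euler form need not be defined; a natural attempt is to replace $K_0(\proj\Lambda)$ by the relative Grothendieck group of the exact category $U^{\perp}$, using that $D\Lambda$ is an Ext-injective cogenerator so this group should have rank at most $|\Lambda|$.

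A third, more hands-on route is via the endomorphism ring $\Gamma := \End_\Lambda(U)$, so that $|U|$ equals the number of simple $\Gamma$-modules. One could attempt to construct, out of a minimal injective cogenerator $D\Lambda = I_1 \oplus \cdots \oplus I_n$ of $\mod\Lambda$, a complete family of pairwise non-isomorphic simple $\Gamma$-modules out of the compositions factors of $\Hom_\Lambda(U, I_j)$; the prototype is the Wakamatsu case (Lemma \ref{lem:wak-duality}), which would have to be generalized to arbitrary self-orthogonal $U$.

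The main obstacle is precisely that self-orthogonality alone carries no functorial finiteness information about $U^{\perp}$, and the whole technology of Bongartz completion, almost split sequences, and relative Grothendieck groups developed in this paper rests on such finiteness. Any proof in the general case will therefore have to provide a genuinely new invariant of a self-orthogonal module that is monotone in direct summands and bounded by $|\Lambda|$; the step of showing this invariant actually counts $|U|$ faithfully is what I expect to be the hardest part, and indeed reflects why the statement is phrased as a conjecture rather than a theorem.
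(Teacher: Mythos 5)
The statement you were asked about is Conjecture \ref{conj:bounded}, not a theorem: the paper itself offers no proof of it in general, so there is nothing to compare your attempt against. What the paper does establish is the special case where $\#\ind(U^\perp)$ is finite (Lemma \ref{lem:wak-number} via Bongartz completion, and independently Proposition \ref{prop:bounded} via the almost-split-sequence count of Lemma \ref{lem:exact-cat}), and your first paragraph accurately reconstructs that argument and correctly identifies why it breaks down without finiteness of $U^\perp$: both the existence of a Bongartz completion (Theorem \ref{thm:main}) and the cardinality bookkeeping of Lemma \ref{lem:exact-cat} require a finite cover or a finite number of indecomposables.

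Your remaining paragraphs are a fair survey of the known partial approaches and their limitations. The Euler-form argument you sketch in the second paragraph is essentially the classical proof for modules of finite projective dimension (and is the one underlying \cite{RS}), and you are right that the obstruction in general is the lack of a well-defined bilinear form when $\pd U_\Lambda = \infty$; the proposed replacement by a relative Grothendieck group of $U^\perp$ is plausible in spirit but there is no argument in the paper or the literature that such a group has rank bounded by $|\Lambda|$ without a finiteness hypothesis. Your third route, passing to $\Gamma = \End_\Lambda(U)$ and counting simples, runs into exactly the problem that the duality of Lemma \ref{lem:wak-duality} is proved only for Wakamatsu tilting modules, and there is no substitute for arbitrary self-orthogonal $U$. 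In short, you have not produced a proof, but you also correctly recognized that none is available: your diagnosis that any complete proof would require a genuinely new finiteness-free invariant is consistent with the paper's decision to state this as an open conjecture.
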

\begin{conjecture}[Proj=Inj conjecture]
  If $T$ is Wakamatsu tilting, then $|T| = |\Lambda|$ holds.
\end{conjecture}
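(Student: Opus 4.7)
The plan is to first reduce the conjecture via the Wakamatsu tilting duality of Lemma \ref{lem:wak-duality}. Setting $\Gamma := \End_\Lambda(T)$, the module $_\Gamma T$ is Wakamatsu tilting over $\Gamma^{\op}$, and the pair of functors $\Hom_\Lambda(-,T)$ and $\Hom_\Gamma(-,T)$ induce a duality between the exact categories $\XX_{T_\Lambda}$ and $\XX_{_\Gamma T}$. This duality sends $\Lambda \mapsto T$ and $T \mapsto \Gamma$, thereby matching the $\Ext$-progenerator on one side with the $\Ext$-injective cogenerator on the other. Since $T_\Lambda$ is basic, $|T_\Lambda| = |\Gamma|$ by counting primitive idempotents in $\End_\Lambda(T)$, so the conjecture becomes the equality $|\Lambda| = |\Gamma|$, i.e., the passage $T \mapsto \End_\Lambda(T)$ preserves the number of simples; applied to the dual side, the same reasoning shows that the Proj=Inj conjecture is self-dual.

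Having identified the problem as a self-dual rank-preservation statement, I would attack it inside the exact category $\YY_T$, in which $T$ is an $\Ext$-progenerator and $D\Lambda$ an $\Ext$-injective cogenerator, aiming to show $|T| = |D\Lambda| = |\Lambda|$. The model to imitate is Lemma \ref{lem:exact-cat}: in the finite case one counts indecomposables through the bijection produced by almost split sequences. In the general case I would try to exhibit, for each indecomposable summand $I_j$ of $D\Lambda$, a canonical ``AR-theoretic partner'' in $\add T$ by taking a minimal right $\add T$-approximation of $I_j$ in $\YY_T$, and dually for each summand of $T$ via minimal left $\add D\Lambda$-approximations. If these assignments descend to well-defined mutually inverse bijections on isomorphism classes of indecomposables, the desired rank equality follows. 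A parallel strategy is to construct a Grothendieck-type group attached to $\YY_T$ that respects the progenerator / injective-cogenerator dichotomy and to argue that both $[T]$ and $[D\Lambda]$ give bases of the same free abelian group, imitating the $K_0$-argument in classical tilting theory.

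The main obstacle is precisely the lack of finiteness: in general $\YY_T$ need not be functorially finite in $\mod\Lambda$, so minimal approximations and almost split sequences within $\YY_T$ are not a priori available, and one must instead extract such pairings from the Wakamatsu tilting datum alone. Any $K_0$-style approach must also contend with the fact that $\YY_T$ may not carry enough structure to control linear dependencies among the relevant classes. Given that Example \ref{ex:mso-not-wak} and Example \ref{ex:weakly-gor} already show how the Proj=Inj, Boundedness, and Auslander-Reiten conjectures interlock outside the representation-finite regime, a proof in full generality will most likely require genuinely new input rather than a direct adaptation of the finite-case argument behind Theorem \ref{thm:main2}.
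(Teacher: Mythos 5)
This statement is an open conjecture, and the paper does not prove it; your proposal correctly recognizes that no complete argument is available and does not falsely claim one. Your first paragraph -- passing to $\Gamma = \End_\Lambda(T)$ and using Lemma~\ref{lem:wak-duality}, so that $|T_\Lambda| = |\Gamma|$ and the duality exchanges $\proj\Lambda$ with $\add({}_\Gamma T)$ -- is exactly the mechanism the paper deploys in Proposition~\ref{prop:conj-rel}(1) to deduce (PIC) from (BC): if $|T_\Lambda| < |\Lambda|$ then $|\Gamma| < |{}_\Gamma T|$ over $\Gamma^{\op}$, contradicting boundedness. Your second paragraph likewise mirrors the paper's only unconditional result in this direction, namely Lemma~\ref{lem:exact-cat} and Lemma~\ref{lem:wak-number}, which obtain $|T| = |\Lambda|$ by counting through the almost-split-sequence bijection in $\YY_T$ \emph{when $\#\ind(T^\perp)$ is finite} (feeding into Theorem~\ref{thm:main2}(2)$\Rightarrow$(3)).

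The obstruction you identify is the genuine one: outside the representation-finite setting, $\YY_T$ need not be functorially finite, so minimal approximations and almost split sequences inside it are unavailable, and the counting argument collapses. Neither your proposed AR-theoretic pairing of indecomposable summands of $T$ with those of $D\Lambda$ nor the $K_0$-style argument has been made to work without such finiteness; the paper offers no route around this, only the conditional implication (BC) $\Rightarrow$ (PIC) and the finiteness-restricted case. In short, your assessment is accurate: the reduction and the finite-case model are correct and match the paper's partial results, but a proof in full generality would require new input not present in the paper.
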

\begin{conjecture}[Maximal self-orthogonal conjecture]\label{conj:msoc}
  Every Wakamatsu tilting module is maximal self-orthogonal.
\end{conjecture}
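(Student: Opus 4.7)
The plan is to reduce the Maximal self-orthogonal conjecture to the two sibling conjectures stated immediately before it, namely the Boundedness conjecture and the Proj=Inj conjecture. Let $T$ be Wakamatsu tilting and suppose $T \oplus M$ is self-orthogonal; writing $M = M_0 \oplus M'$ with $M_0 \in \add T$ and $M'$ sharing no indecomposable summand with $T$, the goal becomes to show $M' = 0$. Since $\Ext_\Lambda^{>0}(T, M') = 0 = \Ext_\Lambda^{>0}(M', T)$ and $M'$ is itself self-orthogonal, the module $T \oplus M'$ is basic self-orthogonal.

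Assuming Proj=Inj applied to $T$ gives $|T| = |\Lambda|$, and assuming Boundedness applied to $T \oplus M'$ gives $|T| + |M'| = |T \oplus M'| \leq |\Lambda|$. Combining these yields $|M'| \leq 0$, so $M \in \add T$. Thus the Maximal self-orthogonal conjecture follows formally from the other two; this is essentially the same reduction used in the proof of Theorem \ref{thm:main2}, where Lemma \ref{lem:wak-number} supplies both Boundedness and Proj=Inj simultaneously under the finiteness hypothesis.

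The main obstacle is Boundedness. Proj=Inj for a Wakamatsu tilting $T$ with $\Gamma := \End_\Lambda(T)$ reduces, via the Krull-Schmidt identity $|T| = |\Gamma|$ and the Wakamatsu tilting duality of Lemma \ref{lem:wak-duality}, to the equality $|\Lambda| = |\Gamma|$, a double-centralizer-type statement one expects to hold and which is closely tied to the Generalized Nakayama conjecture. Boundedness, on the other hand, asks that no self-orthogonal module have more indecomposable summands than $\Lambda$, and the only general tool available is Bongartz completion (Theorem \ref{thm:main}), which in turn requires a finite cover of $(T \oplus M')^\perp$. Without finiteness of $\#\ind\bigl((T \oplus M')^\perp\bigr)$ one loses access to almost split sequences and hence to the counting argument of Lemma \ref{lem:exact-cat} underlying the representation-finite case.

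A successful attack on Boundedness therefore seems to require genuinely new input: either an approximation-theoretic construction producing an $\Ext$-progenerator of $(T \oplus M')^\perp$ without the finite cover hypothesis, or a Grothendieck-group or $K$-theoretic invariant controlling $|T \oplus M'|$ directly. Either route lies outside the toolkit developed in this paper, and in view of Proposition \ref{prop:conj-rel} (5), progress is likely to be entangled with the Auslander-Reiten or Generalized Nakayama conjectures.
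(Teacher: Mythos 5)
This statement is an open conjecture; the paper contains no proof of it, and you correctly do not claim one. Your reduction of (MSOC) to (BC) and (PIC) — namely, that $|\Lambda| = |T| \leq |T \oplus M| \leq |\Lambda|$ forces $M \in \add T$ — reproduces exactly the paper's Proposition \ref{prop:conj-rel}(2), your remark that (PIC) itself reduces to (BC) via the endomorphism-ring duality of Lemma \ref{lem:wak-duality} matches Proposition \ref{prop:conj-rel}(1), and your diagnosis that the genuine obstacle is (BC) outside the setting where Lemma \ref{lem:exact-cat} and Bongartz completion are available is precisely where the paper's toolkit stops.
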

\begin{conjecture}[Weak maximal self-orthogonal conjecture]
  Every projectively Wakamatsu tilting module is maximal self-orthogonal.
\end{conjecture}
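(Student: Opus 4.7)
My plan rests on the equivalence $(1)\iff(2)$ recorded in the earlier Propositioni (= Proposition~\ref{prop:conj-rel}(5)): the Weak maximal self-orthogonal conjecture is logically equivalent to the Auslander--Reiten conjecture. The easy direction (Weak MSO $\Rightarrow$ AR) is immediate once one observes that $\Lambda_\Lambda$ is trivially projectively Wakamatsu tilting: $\Lambda^\perp = \mod\Lambda$ and $\Lambda$ is an $\Ext$-progenerator of $\mod\Lambda$. Specialising the Weak MSO conjecture to $T = \Lambda$ then yields maximal self-orthogonality of $\Lambda$, which is AR. So the real content is the other direction, and I would attempt to prove the conjecture by reducing it to an instance of AR over the endomorphism algebra of $T$.

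For that harder direction, let $T$ be projectively Wakamatsu tilting and suppose $T \oplus M$ is self-orthogonal. First, $\Ext^{>0}_\Lambda(T,M) = 0$ gives $M \in T^\perp = \YY_T$, so, using that $T$ is an $\Ext$-progenerator of $T^\perp$, one obtains a short exact sequence $0 \to K \to T^0 \to M \to 0$ with $T^0 \in \add T$ and $K \in T^\perp$. The naive attempt of splitting this sequence by dimension shifting in a $T$-resolution of $M$ shows only that $\Ext^1_\Lambda(M,K) \cong \Ext^{i+1}_\Lambda(M, K_i)$ for every syzygy $K_i$, which does not vanish without further input. To create that input, I would pass to $\Gamma := \End_\Lambda(T)$ via $F := \Hom_\Lambda(T,-)$, using (a version of) Lemma~\ref{lem:wak-duality}: the functor $F$ sends $T$ to $\Gamma$ and induces an exact equivalence between $T^\perp$, viewed as an exact category with progenerator $T$, and an exact subcategory $\mathcal{X} \subseteq \mod\Gamma$ with progenerator $\Gamma = FT$. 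Self-orthogonality of $T \oplus M$ translates into self-orthogonality of $\Gamma \oplus FM$ inside $\mathcal{X}$, and AR applied to $\Gamma$ would then force $FM \in \add\Gamma$, hence $M \in \add T$.

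The main obstacle, and presumably the technical core of Proposition~\ref{prop:conj-rel}(5), is to match the Ext groups: the self-orthogonality obtained inside $\mathcal{X}$ is computed using conflations in $\mathcal{X}$, not using short exact sequences in $\mod\Gamma$. The two $\Ext^1$'s agree when $\mathcal{X}$ is closed under kernels of surjections in $\mod\Gamma$, and for higher Ext one must arrange that projective resolutions of $FM$ over $\Gamma$ can be realised inside $\mathcal{X}$. I would try to handle this via a careful dimension-shifting argument using the $T$-resolution of $M$ in $T^\perp$ combined with the progenerator property of $\Gamma$ in $\mathcal{X}$, possibly leveraging the dual picture on ${}^\perp T$ through Lemma~\ref{lem:wak-duality}. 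Beyond reducing the conjecture to AR, any unconditional proof would resolve the long-standing Auslander--Reiten conjecture; the proposal is therefore one of reduction, with the expectation that the conjecture can be proved for any class of algebras for which AR is known (for example, representation-finite algebras, where it recovers a special case of Theorem~\ref{thm:main2}).
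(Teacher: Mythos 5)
Since the statement is an open conjecture, no unconditional proof exists; what the paper establishes is the equivalence with the Auslander--Reiten conjecture in Proposition~\ref{prop:conj-rel}(5), and your proposal correctly identifies this as the only realistic target. Your reduction strategy is in fact the same as the paper's: the easy direction wMSOC $\Rightarrow$ ARC is exactly what the paper does (take $T = \Lambda$), and for the converse you pass to $\Gamma := \End_\Lambda(T)$, apply $F := \Hom_\Lambda(T,-)$, and invoke ARC for $\Gamma$.

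The one genuine gap is precisely the step you flagged as ``the technical core'': you need $F$ to preserve higher Ext groups of objects of $T^\perp$, and you do not close this. The paper closes it by citing \cite[Proposition 2.8]{eno-wak}, which shows that $F$ induces an exact equivalence $T^\perp \simeq F(T^\perp)$ with $F(T^\perp)$ a \emph{resolving} subcategory of $\mod\Gamma$. Since $T^\perp$ is coresolving in $\mod\Lambda$ and $F(T^\perp)$ is resolving in $\mod\Gamma$, both are closed under the relevant (co)syzygies, so their intrinsic Ext groups agree with those computed in the ambient module categories, and hence $F$ preserves all higher Ext. This lets one immediately transfer the self-orthogonality of $T \oplus M$ to $\Gamma \oplus FM$ and apply ARC. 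Also note that Lemma~\ref{lem:wak-duality} is not the correct tool: it concerns the contravariant functors $\Hom_\Lambda(-,T)$ and the categories $\XX_T$, whereas the argument needs the covariant $\Hom_\Lambda(T,-)$ on $T^\perp$; the right reference is \cite[Proposition 2.8]{eno-wak}. With that replacement, your outline becomes the paper's argument.
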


In addition, we recall the following two famous homological conjectures.
\begin{itemize}
  \item The \emph{Auslander-Reiten conjecture}: $\Lambda_\Lambda$ is maximal self-orthogonal.
  \item The \emph{Generalized Nakayama conjecture}: Every indecomposable injective module appears in the minimal injective resolution of $\Lambda$.
\end{itemize}

We abbreviate these conjectures as (BC), (PIC), (MSOC), (wMSOC), (ARC), and (GNC) respectively.
In view of Theorem \ref{thm:main2}, (PIC) states that (2) $\Rightarrow$ (3) holds, (MSOC) states that (2) $\Rightarrow$ (4) holds, and (wMSOC) states that (1) $\Rightarrow$ (4) holds.

The name \emph{Proj=Inj conjecture (PIC)} is derived from the following observation, which relates it to general subcategories of $\mod\Lambda$.
\begin{proposition}\label{prop:PIC-equiv}
  (PIC) for every artin algebra is equivalent to the following conjecture:
  Let $\CC$ be a subcategory of $\mod\Lambda$ which is closed under extensions and direct summands. Suppose that $\CC$ has an $\Ext$-progenerator $P$ and an $\Ext$-injective cogenerator $I$. Then $|P| = |I|$ holds.
\end{proposition}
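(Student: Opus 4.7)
The plan is to verify both directions of the equivalence by translating between the two settings. For the easy implication, that the general conjecture implies (PIC), I would take a Wakamatsu tilting $\Lambda$-module $T$ and apply the general statement to $\CC := \YY_T$. Proposition \ref{prop:4cats-projinj}(4) gives that $\YY_T$ is closed under extensions and direct summands and has $T$ as an $\Ext$-progenerator. Since $T$ is Wakamatsu tilting, Proposition \ref{prop:wak-dual} yields $D\Lambda \in \YY_T$, and combined with $D\Lambda \in \II(T^\perp) \supseteq \II(\YY_T)$ from Proposition \ref{prop:4cats-projinj}(3), this shows $D\Lambda$ is an $\Ext$-injective cogenerator of $\YY_T$. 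The general statement then forces $|T| = |D\Lambda| = |\Lambda|$.

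For the converse, assume (PIC) holds over every artin algebra, and let $\CC$, $P$, $I$ be as in the statement; I may assume $P$ is basic without changing $|P|$. Set $\Gamma := \End_\Lambda(P)$, which is again an artin $R$-algebra, and consider the canonical functor $F := \Hom_\Lambda(P, -) \colon \CC \to \mod\Gamma$. Equipped with the exact structure inherited from $\mod\Lambda$, the category $\CC$ becomes an exact category with progenerator $P$ and injective cogenerator $I$. The result of \cite{eno-wak} recalled in the introduction then asserts that $F$ identifies $\CC$ with a resolving subcategory of $\mod\Gamma$ via an exact equivalence onto its image, and that $FI$ is a Wakamatsu tilting $\Gamma$-module.

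Applying (PIC) to the Wakamatsu tilting $\Gamma$-module $FI$ yields $|FI| = |\Gamma|$. Since $F$ restricts to an additive equivalence between $\CC$ and $F(\CC)$, it preserves decompositions into indecomposables, so $|FI| = |I|$. On the other hand, writing the basic module $P$ as $P_1 \oplus \cdots \oplus P_n$ with pairwise non-isomorphic indecomposables, the projections onto the $P_i$ form a complete set of primitive orthogonal idempotents of $\Gamma$, so $|\Gamma| = n = |P|$. Chaining these three equalities gives $|P| = |I|$, as required.

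The main work is in the backward direction, and its conceptual heart is the reduction to a genuine module category via the endomorphism algebra $\Gamma = \End_\Lambda(P)$, together with the cited result from \cite{eno-wak} that promotes the injective cogenerator of an abstract exact category to a Wakamatsu tilting module over that endomorphism algebra. Once this translation is in place, both implications amount to correctly matching the invariants $|T|$, $|D\Lambda|$, $|P|$, $|I|$, and $|\Gamma|$, and no further homological input is needed.
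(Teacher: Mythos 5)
Your proof follows essentially the same route as the paper in both directions: the forward implication applies the abstract statement to $\CC = \YY_T$ with $P = T$ and $I = D\Lambda$, and the converse passes to $\Gamma = \End_\Lambda(P)$ via $F = \Hom_\Lambda(P,-)$ and invokes \cite{eno-wak} to recognize $FI$ as Wakamatsu tilting, then matches $|P| = |\Gamma|$ and $|I| = |FI|$ exactly as the paper does. One small slip: the containment you write, $\II(T^\perp) \supseteq \II(\YY_T)$, is neither true in general nor what you need; the relevant (and correct) observation is that an object of $\YY_T$ which is $\Ext$-injective in the larger category $T^\perp$ remains $\Ext$-injective in the subcategory $\YY_T \subseteq T^\perp$, which together with $D\Lambda \in \YY_T$ (from Proposition~\ref{prop:wak-dual}) gives $D\Lambda \in \II(\YY_T)$ as desired.
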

\begin{proof}
  Let $T$ be a Wakamatsu tilting module. Then $\YY_T$ has an $\Ext$-progenerator $T$ and an $\Ext$-injective cogenerator $D\Lambda$. Thus, the stated conjecture implies that $|T| = |D\Lambda| =  |\Lambda|$.

  Conversely, let $\CC$ be a subcategory of $\mod\Lambda$ satisfying the stated condition, and put $\Gamma := \End_\Lambda(P)$ and $F := \Hom_\Lambda(P,-) \colon \mod\Lambda \to \mod \Gamma$.
  Thus, \cite[Theorem 3.3]{eno-wak} shows that $FI$ is a Wakamatsu tilting $\Gamma$-module and $F$ induces equivalences $\add P \equi \proj \Gamma$ and $\add I \equi \add FI$.
  Then, (PIC) will imply that $|\Gamma| = |FI|$, which shows that $|P| = |\Gamma| = |FI| = |I|$.
\end{proof}

\begin{remark}
  Here we provide a list of papers studying these conjectures, though by no means exhaustive.
  \begin{itemize}
    \item (BC) appears in various publications, such as \cite{happel, HU}. The name originates from \cite{happel}.
    \item (PIC) can be found in \cite{BS}, whereas the variant of Proposition \ref{prop:PIC-equiv} appears in \cite{AS}.
    \item (ARC) and (GNC) were formulated by Auslander and Reiten \cite{arc}. These two conjectures have become two of the most widely recognised homological conjectures.
  \end{itemize}
\end{remark}

As we have seen in Lemma \ref{lem:wak-number}, the Boundedness conjecture holds under the finiteness assumption. For the convenience of the reader, we give another simple proof of this without using Bongartz completion.
\begin{proposition}\label{prop:bounded}
  Let $U$ be a self-orthogonal module such that $\#\ind(U^\perp)$ is finite. Then $|U| \leq |\Lambda|$ holds.
\end{proposition}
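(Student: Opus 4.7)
The plan is to read off the bound directly from the $\Ext$-projective/$\Ext$-injective bijection built into Lemma \ref{lem:exact-cat}, applied to the coresolving category $\EE := U^\perp$, without invoking Bongartz completion at all.

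First I would collect the structural information on $\EE$ supplied by Proposition \ref{prop:4cats-projinj}(3): $\EE$ is closed under extensions and direct summands, $D\Lambda$ is an $\Ext$-injective cogenerator of $\EE$ (so by the very definition of $\Ext$-injective cogenerator we have $\add(D\Lambda) = \II(\EE)$, and in particular $\#\ind\II(\EE) = |D\Lambda| = |\Lambda|$), and $U \in \PP(\EE)$ because $U$ is self-orthogonal.

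Next I would address the $\Ext$-projective side. Since $\PP(\EE) \subseteq \EE$ and $\#\ind\EE$ is finite by hypothesis, $\PP(\EE)$ has only finitely many indecomposables. Let $P$ be the direct sum of one representative from each isomorphism class, so that $\add P = \PP(\EE)$ and $|P| = \#\ind\PP(\EE)$. The hypotheses of Lemma \ref{lem:exact-cat} are now satisfied for $\EE$ with this $P$ and $I := D\Lambda$, and the lemma yields $|P| = |D\Lambda| = |\Lambda|$.

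Finally, since $U \in \PP(\EE) = \add P$, every indecomposable direct summand of $U$ is also a direct summand of $P$, and hence $|U| \leq |P| = |\Lambda|$. There is no real obstacle in this argument: the only point that requires a line of explanation is the finiteness of $\ind\PP(\EE)$, which is immediate from $\PP(\EE) \subseteq \EE$, and the identification $\II(\EE) = \add(D\Lambda)$, which is built into Proposition \ref{prop:4cats-projinj}(3).
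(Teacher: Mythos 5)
Your proposal is correct and follows essentially the same route as the paper: apply Lemma \ref{lem:exact-cat} to $U^\perp$ together with Proposition \ref{prop:4cats-projinj}(3) to get $\#\ind\PP(U^\perp) = \#\ind\II(U^\perp) = |D\Lambda| = |\Lambda|$, then use $U \in \PP(U^\perp)$ to conclude $|U| \leq |\Lambda|$. The only difference is that you spell out the (trivial but worth noting) point that $\PP(U^\perp)$ has finitely many indecomposables so that a finite $P$ with $\add P = \PP(U^\perp)$ exists, which the paper leaves implicit.
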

\begin{proof}
  Since $\#\ind(U^\perp)$ is finite, we can apply Lemma \ref{lem:exact-cat} to $U^\perp$ to obtain $\# \ind \PP(U^\perp) = \# \ind \II(U^\perp)$. On the other hand, we have $\II(U^\perp) = \add D\Lambda$ and $U \in \PP(U^\perp)$ by Proposition \ref{prop:4cats-projinj}.
  Therefore, we obtain $|U| \leq \#\ind\PP(U^\perp) = |D\Lambda| = |\Lambda|$.
\end{proof}

The relationship between these conjectures is summarized as the following figure.
\[
  \begin{tikzcd}[row sep=0]
    & \text{(PIC)} \ar[rr, Rightarrow]& & \text{(GNC)} \ar[dd, Leftrightarrow] \\
    \text{(BC)} \ar[ru, Rightarrow] \ar[rd, Rightarrow] \\
    & \text{(MSOC)} \rar[Rightarrow] & \text{(wMSOC)} \rar[Leftrightarrow] & \text{(ARC)}
  \end{tikzcd}
\]
In particular, we will show that (wMSOC) is equivalent to (ARC).
The equivalence of (ARC) and (GNC) is shown in \cite{arc}, but some implications hold at the level of individual algebras, we consider both (ARC) and (GNC). Note that the implication (MSOC) $\Rightarrow$ (wMSOC) is clear from Proposition \ref{prop:wak-proj-char}.
\begin{proposition}\label{prop:conj-rel}
  We have the following implications:
  \begin{enumerate}
    \item (BC) implies (PIC).
    \item (BC) implies (MSOC).
    \item If $\Lambda$ satisfies either (BC), (PIC), or (wMSOC), then it satisfies (ARC).
    \item If $\Lambda$ satisfies either (PIC) or (MSOC), then it satisfies (GNC).
    \item (wMSOC) is equivalent to (ARC).
  \end{enumerate}
\end{proposition}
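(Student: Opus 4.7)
My plan is to handle the five implications by combining (i) a cardinality count for Wakamatsu tilting modules in the spirit of Lemma~\ref{lem:wak-number} and Proposition~\ref{prop:PIC-equiv}, and (ii) a functorial transfer via $F := \Hom_\Lambda(T,-)\colon \mod\Lambda\to\mod\Gamma$, where $\Gamma := \End_\Lambda(T)$, using the machinery from Lemma~\ref{lem:wak-duality} and \cite{eno-wak} to convert self-orthogonality over $\Lambda$ into self-orthogonality over $\Gamma$. Implications (1)--(4) then reduce to dimension arithmetic, while the substantive content of (5) is the direction (ARC) $\Rightarrow$ (wMSOC).

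For (1), I would mimic the proof of Proposition~\ref{prop:PIC-equiv}: for a Wakamatsu tilting $T$, the module $F(D\Lambda)$ is Wakamatsu tilting over $\Gamma$, and $F$ induces equivalences $\add T\simeq\proj\Gamma$ and $\add D\Lambda\simeq\add F(D\Lambda)$. Applying (BC) over $\Lambda$ to $T$ gives $|T|\le|\Lambda|$, and applying (BC) over $\Gamma$ to $F(D\Lambda)$ gives $|\Lambda|=|F(D\Lambda)|\le|\Gamma|=|T|$, whence (PIC). For (2), if $T$ is Wakamatsu tilting and $T\oplus M$ is self-orthogonal, (1) yields $|T|=|\Lambda|$, and (BC) applied to $T\oplus M$ forces $|T\oplus M|\le|T|$, so $M\in\add T$. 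For (3), I would note that $\Lambda\oplus M$ is automatically Wakamatsu tilting whenever it is self-orthogonal, since $\Lambda\in\add(\Lambda\oplus M)\subseteq\XX_{\Lambda\oplus M}$; then (BC) or (PIC) yields $|\Lambda\oplus M|\le|\Lambda|$ and hence $M\in\add\Lambda$, while (wMSOC) applied to the tilting, hence projectively Wakamatsu tilting, module $\Lambda$ yields (ARC) directly. Part (4) then follows by combining (3) with the classical Auslander--Reiten equivalence (ARC) $\Leftrightarrow$ (GNC) and the obvious implication (MSOC) $\Rightarrow$ (wMSOC).

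The main content is (5). The direction (wMSOC) $\Rightarrow$ (ARC) is immediate by taking $T=\Lambda$. For the converse, suppose $T$ is projectively Wakamatsu tilting and $T\oplus M$ is self-orthogonal; then $M\in T^\perp=\YY_T$. Since $T$ is an $\Ext$-progenerator of the coresolving subcategory $T^\perp$, every $X\in T^\perp$ admits an $\add T$-resolution whose syzygies stay in $T^\perp$, and $F$ converts it into a genuine projective resolution of $F(X)$ in $\mod\Gamma$ because $F$ is exact on short exact sequences in $T^\perp$. Combined with the obvious $\Hom_\Gamma(F(T'),F(Y))\cong\Hom_\Lambda(T',Y)$ for $T'\in\add T$, a dimension-shifting argument produces the natural isomorphism $\Ext^i_\Gamma(F(X),F(Y))\cong\Ext^i_\Lambda(X,Y)$ for all $X,Y\in T^\perp$ and $i\ge 0$. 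Consequently $\Gamma\oplus F(M)=F(T\oplus M)$ is self-orthogonal over $\Gamma$, and (ARC) for $\Gamma$ forces $F(M)\in\proj\Gamma$. The equivalence $\add T\simeq\proj\Gamma$ then provides $M'\in\add T$ with $F(M')\cong F(M)$, and fully faithfulness of $F$ on $T^\perp$ (the $i=0$ case of the Ext isomorphism) gives $M\cong M'\in\add T$.

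The principal obstacle will be making the Ext-comparison isomorphism along $F|_{T^\perp}$ rigorous: one must simultaneously verify exactness of $F$ on short exact sequences whose terms lie in $T^\perp$, projectivity in $\mod\Gamma$ of the images of $\add T$-resolutions, and the agreement $\Ext^i_{T^\perp}=\Ext^i_\Lambda$ that makes the exact-category Ext computable from these resolutions. An alternative, probably cleaner route is to invoke the Wakamatsu tilting equivalence framework of \cite{eno-wak} to package this comparison in one step rather than rebuilding it by hand.
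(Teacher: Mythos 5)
Your treatment of parts (1), (2), (3), and (5) is essentially correct and follows the same conceptual lines as the paper. For (1) you use the covariant functor $F=\Hom_\Lambda(T,-)$ and the fact (from \cite{eno-wak}, as in Proposition~\ref{prop:PIC-equiv}) that $F(D\Lambda)$ is a Wakamatsu tilting $\Gamma$-module, whereas the paper uses the contravariant functor $\Hom_\Lambda(-,T)$ and Lemma~\ref{lem:wak-duality} to transfer the count to ${}_\Gamma T$; both work, and both tacitly apply (BC) to the endomorphism ring, which is fine since (BC) is a conjecture about all artin algebras. Your sketch of (5) coincides with the paper's, and you even identify the cleaner route (invoking \cite[Proposition~2.8]{eno-wak} for the exact equivalence $T^\perp\simeq F(T^\perp)$ with $F(T^\perp)$ resolving) that the paper actually takes, which immediately yields the Ext-comparison rather than reconstructing it by dimension shift.

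The genuine gap is in your proof of (4). You derive it from (3) together with ``the classical Auslander--Reiten equivalence (ARC) $\Leftrightarrow$ (GNC).'' But that equivalence, as proven in \cite{arc}, is a statement about the two conjectures holding \emph{for all} artin algebras simultaneously (the argument passes to endomorphism rings), and it does not deliver the implication ``(ARC) for $\Lambda$ $\Rightarrow$ (GNC) for $\Lambda$'' for a fixed $\Lambda$ --- which is precisely what (4) asserts at the individual-algebra level. The paper flags this issue explicitly just before the proposition (``some implications hold at the level of individual algebras, we consider both'') and avoids the shortcut by giving a direct argument: let $Q$ be the direct sum of the non-isomorphic indecomposable injectives appearing in a minimal injective resolution of $\Lambda$; then $Q$ is injective (hence self-orthogonal) and the very construction gives $\Lambda\in\XX_Q$, so $Q$ is Wakamatsu tilting. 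If $\Lambda$ satisfies (PIC), then $|Q|=|\Lambda|=|D\Lambda|$ forces $\add Q=\add D\Lambda$; if $\Lambda$ satisfies (MSOC), then $Q$ is maximal self-orthogonal and $Q\oplus D\Lambda$ is self-orthogonal, again forcing $\add Q=\add D\Lambda$. Either way (GNC) follows, with no detour through (ARC). You should replace your appeal to the global equivalence with this direct construction of $Q$.
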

\begin{proof}
  (1)
  Assume (BC), and let $T$ be a Wakamatsu tilting $\Lambda$-module with $|T| \neq |\Lambda|$. By (BC), we obtain $|T| < |\Lambda|$.
  Let $\Gamma := \End_\Lambda(T)$.
  Then Lemma \ref{lem:wak-duality} shows that $_\Gamma T$ is a Wakamatsu tilting left $\Gamma$-module, and that $\Hom_\Lambda(-, {}_\Gamma T_\Lambda)$ induces dualities $\proj\Lambda \equi \add (_\Gamma T)$ and $\add(T_\Lambda) \equi \proj \Gamma^{\op}$.
  In particular, we obtain $|\Gamma| = |T_\Lambda| < |\Lambda| = |{}_\Gamma T|$, which contradicts (BC) since $_\Gamma T$ is self-orthogonal.

  (2)
  Assume (BC), and let $T$ be a Wakamatsu tilting module with $T \oplus M$ being self-orthogonal. Then (PIC) holds by (1), so we have $|T| = |\Lambda|$. Since $T \oplus M$ is self-orthogonal, (BC) implies $|T \oplus M| \leq |\Lambda|$.
  Thus, we have $|\Lambda| = |T| \leq |T \oplus M| \leq |\Lambda|$, which implies $|T \oplus M| = |T|$. This shows that $M \in \add T$, and thus $T$ is maximal self-orthogonal.

  (3)
  Assume (BC) or (PIC) for $\Lambda$, and suppose that $\Lambda \oplus M$ is self-orthogonal. Then (BC) will imply $|\Lambda \oplus M| \leq |\Lambda|$, which in turn implies $M \in \add\Lambda$.
  Observe that $\Lambda \oplus M$ is Wakamatsu tilting since we have $\Lambda \in \XX_{\Lambda \oplus M}$ by the trivial sequence $0 \to \Lambda = \Lambda \to 0$. Then (PIC) will imply $|\Lambda \oplus M| = |\Lambda|$, so $M \in \add \Lambda$.

  Finally, since $\Lambda_\Lambda$ is projectively Wakamatsu tilting, (wMSOC) for $\Lambda$ clearly implies (ARC) for $\Lambda$.

  (4)
  Let $Q$ be the direct sum of all non-isomorphic indecomposable injective modules appearing in the minimal injective resolution of $\Lambda$, so $Q \in \inj\Lambda = \add D\Lambda$. It suffices to show that $\add Q = \add D\Lambda$.
  Since $Q$ is injective, it is self-orthogonal, and the construction of $Q$ implies that $\Lambda \in \XX_Q$. Therefore, $Q$ is a Wakamatsu tilting module.
  If $\Lambda$ satisfies (PIC), then $|Q| = |\Lambda| = |D\Lambda|$, which implies that $\add Q = \add D\Lambda$.
  On the other hand, if $\Lambda$ satisfies (MSOC), then $Q$ is a maximal self-orthogonal module. Now since $Q \oplus D\Lambda$ is self-orthogonal, we obtain $\add Q = \add D\Lambda$.

  (5) By (4), it suffices to show that (ARC) implies (wMSOC).
  Let $T$ be a projectively Wakamatsu tilting $\Lambda$-module with $T \oplus M$ being self-orthogonal.
  Consider the category $T^\perp$, which has an $\Ext$-progenerator $T$ since $T$ is projectively Wakamatsu tilting.
  We also have $T \oplus M \in T^\perp$.

  Define $\Gamma := \End_\Lambda (T)$ and a functor $F := \Hom_\Lambda(T, -) \colon \mod \Lambda \to \mod\Gamma$. Then $F$ induces an exact equivalence $T^\perp \simeq F(T^\perp)$ and $F(T^\perp)$ is a resolving subcategory of $\mod\Gamma$ by \cite[Proposition 2.8]{eno-wak}. Since $T^\perp$ and $F(T^\perp)$ are coresolving and resolving subcategories of $\mod\Lambda$ and $\mod\Gamma$ respectively, higher Ext-groups inside these exact categories are the same as those in $\mod\Lambda$ and $\mod\Gamma$. In particular, $F$ preserves all higher Ext-groups. Therefore, $\Ext^{>0}_\Gamma(F(T \oplus M), F(T \oplus M)) = 0$. Since $F(T \oplus M) = \Gamma \oplus FM$, the Auslander-Reiten conjecture for $\Gamma$ implies $FM \in \proj\Gamma = \add (FT)$. Then, since $F$ is fully faithful on $T^\perp$, we obtain $M \in \add T$. Thus, $T$ is maximal self-orthogonal.
\end{proof}

As mentioned in the introduction, this proposition provides alternative proof of (ARC) and (GNC) for representation-finite algebras. Indeed, if $\Lambda$ is a representation-finite artin algebra, then Theorem \ref{thm:main2} shows that $\Lambda$ satisfies (MSOC), so it satisfies (ARC) and (GNC) by Proposition \ref{prop:conj-rel} (3) and (4).

Finally, we propose an additional conjecture regarding Theorem \ref{thm:main2} (3).
\begin{conjecture}
  Let $T$ be a self-orthogonal $\Lambda$-module. If $|T| = |\Lambda|$, then $T$ is Wakamatsu tilting.
\end{conjecture}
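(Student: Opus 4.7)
The plan is to combine Bongartz completion with dimension counting, aiming for the stronger conclusion that $T$ is in fact projectively Wakamatsu tilting, which then implies Wakamatsu tilting by Proposition \ref{prop:wak-proj-char}.

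First, I would attempt to produce a Bongartz completion $T'$ of $T$. By Theorem \ref{thm:main}, this reduces to exhibiting a finite cover of $T^\perp$. Granting such a completion, $T'$ is projectively Wakamatsu tilting (hence Wakamatsu tilting) with $T \in \add T'$. The Proj=Inj conjecture applied to $T'$ would then force $|T'| = |\Lambda| = |T|$, and combining $T \in \add T'$ with the equality $|T| = |T'|$ yields $\add T = \add T'$; thus $T$ itself is projectively Wakamatsu tilting.

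In the special case $\#\ind (T^\perp) < \infty$ (e.g.\ $\Lambda$ representation-finite), both ingredients are automatic: a finite cover of $T^\perp$ exists trivially, and Lemma \ref{lem:wak-number} yields $|T'| = |\Lambda|$ unconditionally, so no appeal to (PIC) is needed. The conjecture in this case is already contained in Theorem \ref{thm:main2}, the implication (3) $\Rightarrow$ (2).

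The principal obstacle is producing a finite cover of $T^\perp$ from nothing more than the numerical equality $|T| = |\Lambda|$ together with self-orthogonality. One natural line is to pass to $\Gamma := \End_\Lambda(T)$ via $F := \Hom_\Lambda(T, -)$: the hypothesis translates into $|\Gamma| = |\Lambda|$, and $\Gamma = F T$ is a genuine progenerator for the resolving subcategory $F(T^\perp) \subseteq \mod \Gamma$, so the question may reduce to a statement about $\mod \Gamma$. A more direct alternative is to construct a $T$-resolution of $D\Lambda$ by iterating right $T$-approximations (which always exist since everything is finitely generated) and arguing that surjectivity must hold at each stage because $|T| = |\Lambda|$ leaves no room to miss an indecomposable injective. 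Neither route is clear without further hypotheses on $\Lambda$, which is why the statement is posed as a conjecture rather than a theorem.
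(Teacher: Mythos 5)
The statement is posed as a conjecture in the paper, with no proof supplied, so there is nothing to compare against; you correctly recognize it as open and do not claim to close it. Your conditional program is sound and consistent with the paper's machinery: granting a finite cover of $T^\perp$, Theorem \ref{thm:main} produces a Bongartz completion $T'$ with $T \in \add T'$, and granting (PIC), $|T'| = |\Lambda| = |T|$ forces $\add T = \add T'$, so $T$ would be projectively Wakamatsu tilting. Under the hypothesis $\#\ind(T^\perp) < \infty$ both inputs are automatic (Corollary \ref{cor:bon-for-rep-fin} and Lemma \ref{lem:wak-number}) and the conjecture collapses into Theorem \ref{thm:main2}, exactly as you note.

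The obstacle you flag is the right one: producing a finite cover of $T^\perp$ --- or, equivalently by Corollary \ref{cor:wak-proj-char2}, showing that $T$ itself covers $T^\perp$ --- from the bare numerical hypothesis $|T| = |\Lambda|$ together with self-orthogonality is precisely the open content, and the paper offers no mechanism for it. The passage to $\Gamma := \End_\Lambda(T)$ transfers the question to a resolving subcategory $F(T^\perp)$ of $\mod\Gamma$ with progenerator $\Gamma$ and $\Ext$-injective cogenerator $FD\Lambda$, which is the dual configuration and not obviously easier; and the heuristic that ``$|T| = |\Lambda|$ leaves no room to miss an indecomposable injective'' does not by itself force surjectivity of successive right $T$-approximations of $D\Lambda$.
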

A similar conjecture concerning tilting modules has previously been raised in various sources, such as \cite{RS}: a self-orthogonal $\Lambda$-module of finite projective dimension over $\Lambda$ satisfying $|T| = |\Lambda|$ is tilting.

\section{Examples}\label{sec:example}
Throughout this section, we let $k$ be a field.
First, we provide some examples of projectively Wakamatsu tilting = Wakamatsu tilting modules over representation-finite algebras, which coincide by Theorem \ref{thm:main2}.
More precisely, we give examples of $(\wtilt\Lambda, \leq)$ (see Definition \ref{def:binary}). If $(\wtilt\Lambda, \leq)$ is a poset, we will illustrate it using its Hasse quiver, i.e., by drawing an arrow from $T_1$ to $T_2$ if $T_1 > T_2$ and there is no $T' \in \wtilt\Lambda$ satisfying $T_1 > T' > T_2$. Tilting modules in the Hasse quiver are represented by rectangles, while cotilting modules are represented by circles.
To calculate these examples, we used the computer program \cite{FD-applet}, which was developed by the author.

We note that, as Corollary \ref{cor:IG} shows, all Wakamatsu tilting modules are both tilting and cotilting if $\Lambda$ is representation-finite and Iwanaga-Gorenstein.
Therefore, to provide examples of Wakamatsu tilting modules which are neither tilting nor cotilting (referred to as \emph{non-(co)tilting}), we must consider non-Iwanaga-Gorenstein algebras.

To represent Nakayama algebras, we will use \emph{Kupisch series}, which is the series of dimensions of the indecomposable projective modules $P(1), P(2), P(3), \dots$ for Nakayama algebras with the quiver $1 \rightarrow 2 \rightarrow 3 \rightarrow \cdots$.
\begin{example}\label{ex:poset-differ}
  A Nakayama algebra $\Lambda$ with the smallest dimension admitting non-(co)tilting Wakamatsu tilting modules has Kupisch series [3, 4, 4, 4], which is given by the quiver
  \[
    \begin{tikzcd}
      1 \rar["a"] & 2 \dar["b"] \\
      4 \uar["d"] & 3 \lar["c"]
    \end{tikzcd}
  \]
  with relations $abc = bcda = cdab = 0$. There are 3 indecomposable projective-injective $\Lambda$-modules, namely $P(2)$, $P(3)$, and $P(4)$. As every Wakamatsu tilting module $T$ is maximal self-orthogonal, we have $P(2), P(3), P(4) \in \add T$. Moreover, $|T| = |\Lambda| = 4$ must hold, so a Wakamatsu tilting module $T$ is uniquely determined by an indecomposable non-projective-injective self-orthogonal module $X$.
  There are 6 such modules $X$, and $(\wtilt\Lambda, \leq)$ defines a poset. The figure below represents the Hasse quiver of $(\wtilt\Lambda, \leq)$, where we only show $X$.
  \[
    \begin{tikzcd}[row sep=tiny]
      |[draw, rectangle]| \sst{1 \\ 2 \\ 3} \dar \rar
      & \sst{1 \\ 2} \rar \dar
      & |[draw, ellipse, inner sep=2]|\sst{1} \dar \\
      |[draw, rectangle]| \sst{4} \rar
      & \sst{3 \\ 4} \rar
      & |[draw, ellipse, inner sep=2]| \sst{2\\3\\4}
    \end{tikzcd}
  \]
  In particular, there are 2 non-(co)tilting Wakamatsu tilting modules: $P(2) \oplus P(3) \oplus P(4) \oplus X$ with $X = \sst{1\\2}, \sst{3\\4}$.

  In addition, this gives an example of $T_1, T_2 \in \wtilt\Lambda$ such that $T_1 \geq T_2$ but $T_1^\perp \not\supseteq T_2^\perp$. Consider $T_i = P(2) \oplus P(3) \oplus P(4) \oplus X_i$ for $i=1,2$ with $X_1 = \sst{1\\2}$ and $X_2 = \sst{1}$. The Hasse quiver above shows that  $T_1 \geq T_2$. However, we can see that $\sst{3} \in T_2^\perp$ but $\sst{3} \not\in T_1^\perp$ since $\dim_k \Ext^1_\Lambda(\sst{1\\2}, 3) = 1$, so $T_1^\perp \not\supseteq T_2^\perp$.
\end{example}

\begin{example}
  A Nakayama algebra $\Lambda$ with the second-smallest dimension admitting non-(co)tilting Wakamatsu tilting modules has Kupisch series [5, 6, 6], which is given by the quiver
  \[
    \begin{tikzcd}[sep=tiny]
      1 \ar[rr, "a"] & & 2 \ar[dl, "b"] \\
      & 3 \ar[ul, "c"]
    \end{tikzcd}
  \]
  with relations $abcab = bcabca = 0$. As in the preceding example, since there are 2 indecomposable projective-injective $\Lambda$-modules $P(2)$ and $P(3)$ and $|\Lambda| = 3$, Wakamatsu tilting $\Lambda$-modules are in bijective correspondence with indecomposable non-projective-injective self-orthogonal $\Lambda$-modules $X$.
  There are 8 such $X$, and $(\wtilt\Lambda, \leq)$ forms a poset. Its Hasse quiver is given below, where we only show $X$.
  \[
    \begin{tikzcd}[row sep=0, column sep=large]
      & |[draw, rectangle]| \sst{3} \ar[rd] \rar
      & \sst{3\\1\\2\\3} \ar[rd]\\
      |[draw, rectangle]| \sst{1\\2\\3\\1\\2} \ar[ru] \rar \ar[rd]
      & \sst{1\\2} \ar[ru] \ar[rd]
      & \sst{2\\3} \rar
      & |[draw, ellipse, inner sep = 2]| \sst{2\\3\\1\\2\\3}\\
      & \sst{1\\2\\3\\1} \rar \ar[ru]
      & |[draw, ellipse, inner sep = 2]| \sst{1} \ar[ru]
    \end{tikzcd}
  \]
  In particular, there are 4 non-(co)tilting Wakamatsu tilting modules:
\end{example}

\begin{example}
  The following algebra is taken from \cite[Example 3.1]{wakamatsu}. Consider the algebra $\Lambda$ given by the quiver
  \[
    \begin{tikzcd}
      1 \rar[shift left] & 2 \lar[shift left] \rar & 3 \rar & 4 \rar[shift left] & 5 \lar[shift left]
    \end{tikzcd}
  \]
  with relation $\rad^2 = 0$. There are 5 Wakamatsu tilting modules, and $(\wtilt\Lambda, \leq)$ is totally ordered, with the Hasse quiver indicated below. Since $P(1)$ and $P(4)$ are projective-injective, they are included in all Wakamatsu tilting modules. Hence, we only show the other summands.
  \[
    \begin{tikzcd}
      |[draw, rectangle]| \sst{2\\1 \, 3} \oplus \sst{3\\4} \oplus \sst{5\\4} \rar &
      \sst{2\\1 \, 3} \oplus \sst{3\,5\\4} \oplus \sst{5\\4} \rar &
      \sst{2\\1 \, 3} \oplus \sst{3} \oplus \sst{3\,5\\4} \rar &
      \sst{2\\1} \oplus \sst{2\\1 \, 3} \oplus \sst{3\,5\\4} \rar &
      |[draw, ellipse, inner sep = 2]| \sst{2\\1} \oplus \sst{2\\3} \oplus \sst{3\,5\\4}
    \end{tikzcd}
  \]
\end{example}

The examples we have seen suggest that, if there is a Hasse arrow $T_1 \to T_2$ in $(\wtilt\Lambda, \leq)$, then only one indecomposable direct summand differs between $T_1$ and $T_2$. This is known to be the case for tilting theory \cite[Theorem 2.1]{HU-partial}. However, our next two examples will show that this does not apply to Wakamatsu tilting modules.
\begin{example}
  Let $\Lambda$ be a Nakayama algebra with Kupisch series [3, 4, 3, 4], which is given by the quiver
  \[
    \begin{tikzcd}
      1 \rar["a"] & 2 \dar["b"] \\
      4 \uar["d"] & 3 \lar["c"]
    \end{tikzcd}
  \]
  with relations $abc = cda = 0$. There are 2 indecomposable projective-injective modules $P(2)$ and $P(4)$, and the following is the Hasse quiver of $(\wtilt\Lambda, \leq)$, where we only show the remaining two direct summands.
  \[
    \begin{tikzcd}[row sep=tiny, column sep=large]
      & |[draw, rectangle]| \sst{1\\2\\3} \oplus \sst{2} \ar[rd] \rar
      & |[draw, rectangle]| \sst{2} \oplus \sst{4} \ar[rd]\\
      |[draw, rectangle]| \sst{1\\2\\3} \oplus \sst{3\\4\\1} \ar[ru] \rar \ar[rd]
      &|[draw, rectangle]| \sst{3\\4\\1} \oplus \sst{4} \ar[ru] \ar[rd]
      & |[draw, ellipse, inner sep = 2]| \sst{4\\1\\2} \oplus \sst{1} \rar
      & |[draw, ellipse, inner sep = 2]| \sst{2\\3\\4} \oplus \sst{4\\1\\2}\\
      & |[draw, ellipse, inner sep = 2]| \sst{1} \oplus \sst{3} \rar \ar[ru]
      & |[draw, ellipse, inner sep = 2]| \sst{2\\3\\4} \oplus \sst{3} \ar[ru]
    \end{tikzcd}
  \]
  From this diagram, we can observe that for the 4 arrows connecting tilting modules to cotilting modules, the two modules differ by two indecomposable summands.
\end{example}

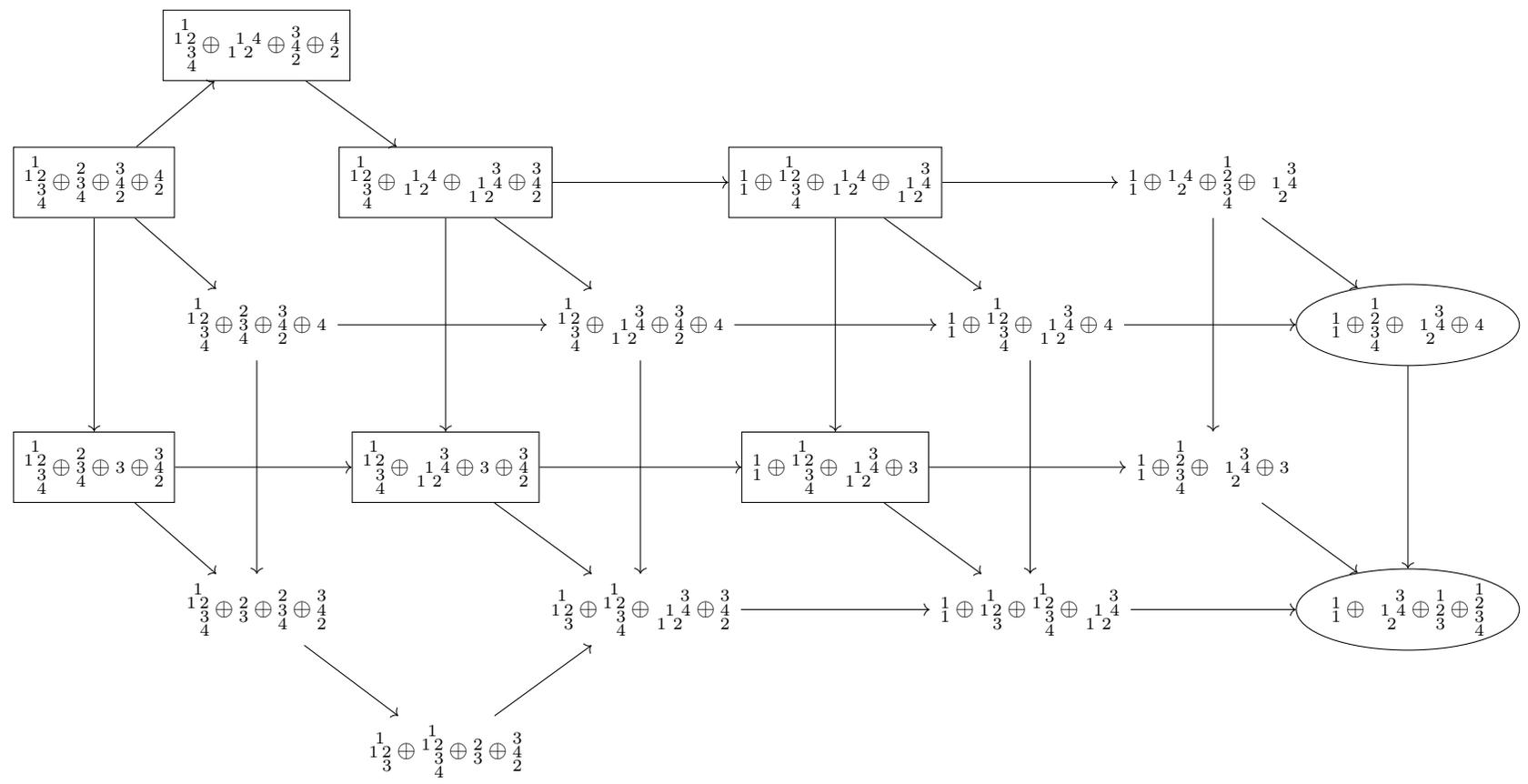
\begin{sidewaysfigure}[p]
  \vspace{15cm}
  \centering
  \begin{tikzcd}[column sep=-5, row sep=large]
    &
    |[draw, rectangle]|
    \sst{1\\ 1\, 2\\\,\,\,\, 3\\\,\,\,\, 4} \oplus
    \sst{\,\,\, 1 \,\, 4\\\!\!1\,\, 2}
    \oplus \sst{3\\4\\2} \oplus \sst{4\\2}
    \ar[rd]
    \\
    |[draw, rectangle]|
    \sst{1\\ 1\, 2\\\,\,\,\, 3\\\,\,\,\, 4} \oplus
    \sst{2\\3\\4} \oplus \sst{3\\4\\2} \oplus \sst{4\\2}
    \ar[ru] \ar[rd] \ar[dd] & &
    |[draw, rectangle]|
    \sst{1\\ 1\, 2\\\,\,\,\, 3\\\,\,\,\, 4} \oplus
    \sst{\,\,\, 1 \,\, 4\\\!\!1\,\, 2} \oplus
    \sst{\quad 3 \\ \,\,\,1 \,\, 4 \\ \!\! 1 \,\,2}
    \oplus \sst{3\\4\\2}
    \ar[rr] \ar[rd] \ar[dd] & &
    |[draw, rectangle]|
    \sst{1\\1} \oplus
    \sst{1\\ 1\, 2\\\,\,\,\, 3\\\,\,\,\, 4} \oplus
    \sst{\,\,\, 1 \,\, 4\\\!\!1\,\, 2} \oplus
    \sst{\quad 3 \\ \,\,\,1 \,\, 4 \\ \!\! 1 \,\,2}
    \ar[rr] \ar[rd] \ar[dd] & &
    \sst{1\\1} \oplus
    \sst{1\,\,4 \\ \,2} \oplus
    \sst{1\\2\\3\\4} \oplus
    \sst{\quad 3 \\ \,\,\,1 \,\, 4 \\ \,\,2}
    \ar[rd] \ar[dd]
    \\
    &
    \sst{1\\ 1\, 2\\\,\,\,\, 3\\\,\,\,\, 4} \oplus
    \sst{2\\3\\4} \oplus \sst{3\\4\\2} \oplus \sst{4}
    \ar[rr] \ar[dd] & &
    \sst{1\\ 1\, 2\\\,\,\,\, 3\\\,\,\,\, 4} \oplus
    \sst{\quad 3 \\ \,\,\,1 \,\, 4 \\ \!\! 1 \,\,2} \oplus
    \sst{3\\4\\2} \oplus
    \sst{4}
    \ar[rr] \ar[dd] & &
    \sst{1\\1} \oplus
    \sst{1\\ 1\, 2\\\,\,\,\, 3\\\,\,\,\, 4} \oplus
    \sst{\quad 3 \\ \,\,\,1 \,\, 4 \\ \!\! 1 \,\,2} \oplus
    \sst{4}
    \ar[rr] \ar[dd] & &
    |[draw, ellipse, inner sep=1]|
    \sst{1\\1} \oplus
    \sst{1\\2\\3\\4} \oplus
    \sst{\quad 3 \\ \,\,\,1 \,\, 4 \\ \,\,2} \oplus
    \sst{4}
    \ar[dd]
    \\
    |[draw, rectangle]|
    \sst{1\\ 1\, 2\\\,\,\,\, 3\\\,\,\,\, 4} \oplus
    \sst{2\\3\\4} \oplus \sst{3} \oplus \sst{3\\4\\2}
    \ar[rr] \ar[rd] & &
    |[draw, rectangle]|
    \sst{1\\ 1\, 2\\\,\,\,\, 3\\\,\,\,\, 4} \oplus
    \sst{\quad 3 \\ \,\,\,1 \,\, 4 \\ \!\! 1 \,\,2} \oplus
    \sst{3} \oplus
    \sst{3\\4\\2}
    \ar[rr] \ar[rd] & &
    |[draw, rectangle]|
    \sst{1\\1} \oplus
    \sst{1\\ 1\, 2\\\,\,\,\, 3\\\,\,\,\, 4} \oplus
    \sst{\quad 3 \\ \,\,\,1 \,\, 4 \\ \!\! 1 \,\,2} \oplus
    \sst{3}
    \ar[rr] \ar[rd] & &
    \sst{1\\1} \oplus
    \sst{1\\2\\3\\4} \oplus
    \sst{\quad 3 \\ \,\,\,1 \,\, 4 \\ \,\,2} \oplus
    \sst{3}
    \ar[rd]
    \\
    &
    \sst{1\\ 1\, 2\\\,\,\,\, 3\\\,\,\,\, 4} \oplus
    \sst{2\\3} \oplus
    \sst{2\\3\\4} \oplus \sst{3\\4\\2}
    \ar[rd] & &
    \sst{1\\ 1\, 2\\\,\,\,\, 3} \oplus
    \sst{1\\ 1\, 2\\\,\,\,\, 3\\\,\,\,\, 4} \oplus
    \sst{\quad 3 \\ \,\,\,1 \,\, 4 \\ \!\! 1 \,\,2} \oplus
    \sst{3\\4\\2}
    \ar[rr] & &
    \sst{1\\1} \oplus
    \sst{1\\ 1\, 2\\\,\,\,\, 3} \oplus
    \sst{1\\ 1\, 2\\\,\,\,\, 3\\\,\,\,\, 4} \oplus
    \sst{\quad 3 \\ \,\,\,1 \,\, 4 \\ \!\! 1 \,\,2}
    \ar[rr] & &
    |[draw, ellipse, inner sep=1]|
    \sst{1\\1} \oplus
    \sst{\quad 3 \\ \,\,\,1 \,\, 4 \\ \,\,2} \oplus
    \sst{1\\2\\3} \oplus
    \sst{1\\2\\3\\4}
    \\
    & &
    \sst{1\\ 1\, 2\\\,\,\,\, 3} \oplus
    \sst{1\\ 1\, 2\\\,\,\,\, 3\\\,\,\,\, 4} \oplus
    \sst{2\\3} \oplus
    \sst{3\\4\\2}
    \ar[ru]
  \end{tikzcd}
  \caption{The Hasse quiver of $(\wtilt\Lambda, \leq)$ for the algebra $\Lambda$ in Example \ref{ex:large}}
  \label{fig:large}
\end{sidewaysfigure}

\begin{example}\label{ex:large}
  Let $\Lambda$ be an algebra given by the quiver
  \[
    \begin{tikzcd}
      1 \ar[loop below, "a"] \rar["b"] & 2 \rar["c"] & 3 \dar["d"] \\
      & & 4 \ar[lu, "e"]
    \end{tikzcd}
  \]
  with relations $a^2 = ab = cde = ec = 0$. This is a representation-finite algebra with 18 Wakamatsu tilting modules, of which 7 are tilting and 2 are cotilting.
  We can check that $(\wtilt\Lambda, \leq)$ is a poset, and its Hasse quiver is given by Figure \ref{fig:large}. As in the previous example, we can observe that for some Hasse arrows, the two modules differ at more than one direct summands.

\end{example}

Now the following gives an example of $\Lambda$ such that $(\wtilt\Lambda, \leq)$ is not a poset.
\begin{example}\label{ex:not-poset}
  Consider the Nakayama algebra $\Lambda$ with Kupisch series [5, 6, 5, 6, 6, 6], which is given by the quiver
  \[
    \begin{tikzcd}
      1 \rar["a"] & 2 \rar["b"] & 3 \dar["c"] \\
      6 \uar["f"] & 5 \lar["e"] & 4 \lar["d"]
    \end{tikzcd}
  \]
  with relations $abcde = cdefa = defabc = efabcd = 0$. One can verify that there are 36 Wakamatsu tilting modules, of which 4 are tilting and 4 are cotilting. We shall now show that $(\wtilt\Lambda, \leq)$ is not a poset.
  Since there are 4 indecomposable projective-injective $\Lambda$-modules, $P(2), P(4), P(5), P(6)$, it is enough to give the remaining two indecomposable summands to represent each Wakamatsu tilting $\Lambda$-module. Then we can check that the following relations hold in $(\wtilt\Lambda, \leq)$:
  \[
    \sst{3} \oplus \sst{5\\6} < \sst{3} \oplus \sst{6} < \sst{3\\4} \oplus \sst{6}.
  \]
  However, $\sst{3} \oplus \sst{5\\6} \not< \sst{3\\4} \oplus \sst{6}$, because $\dim_k \Ext_\Lambda^1(\sst{3\\4} \oplus \sst{6}, \sst{3} \oplus \sst{5\\6}) = 1$.

  Although we omit the calculation here, other Nakayama algebras with the following Kupisch series also satisfy that $(\wtilt\Lambda, \leq)$ is not a poset: for rank 5, [9, 10, 9, 10, 10], [14, 15, 14, 15, 15], [19, 20, 19, 20, 20], etc.; for rank 6, [5, 6, 5, 6, 6, 6] (this example), [5, 6, 6, 5, 6, 6], [10, 12, 11, 12, 12, 11], [10, 12, 12, 11, 12, 11], [11, 11, 12, 11, 12, 12], etc.
\end{example}

We end with the following last example, borrowed from \cite[Section 2]{RS}, which shows that some conditions in Theorem \ref{thm:main2} are not equivalent for the representation-infinite case.
\begin{example}\label{ex:mso-not-wak}
  Let $\Lambda$ be an algebra given by the quiver
  \[
    \begin{tikzcd}[row sep=0]
      & 2 \ar[dd, "b", shift left] \ar[dd, "c"', shift right] \\
      1 \ar[ru, "a"]  \\
      & 3 \ar[lu, "d"]
    \end{tikzcd}
  \]
  with relations $ab = cd = da = 0$. Consider the simple module $S(1)$ corresponding to the vertex $1$. Then $S(1)$ is self-orthogonal and $\pd S(1) = 2$. In \cite[Section 2]{RS}, it is shown that $S(1)$ is maximal self-orthogonal.
  However, it is obvious that $S(1)$ is not Wakamatsu tilting.
\end{example}

\addtocontents{toc}{\SkipTocEntry}
\subsection*{Acknowledgement}
The author expresses gratitude to Osamu Iyama for offering thoughtful comments.
This work is supported by JSPS KAKENHI Grant Number JP21J00299.


\begin{thebibliography}{HU2}

  \bibitem[AB]{AB}
  M. Auslander, R-O. Buchweitz,
  \emph{The homological theory of maximal Cohen-Macaulay approximations},
  Colloque en l'honneur de Pierre Samuel (Orsay, 1987).
  M\'em. Soc. Math. France (N.S.) No. 38 (1989), 5--37.

  \bibitem[AR1]{arc}
  M. Auslander, I. Reiten,
  \emph{On a generalized version of the Nakayama conjecture},
  Proc. Amer. Math. Soc. 52 (1975), 69--74.

  \bibitem[AR2]{applications}
  M. Auslander, I. Reiten,
  \emph{Applications of contravariantly finite subcategories},
  Adv. Math. 86 (1991), no. 1, 111--152.

  \bibitem[AS1]{AS}
  M. Auslander, S. O. Smal\o,
  \emph{Preprojective modules over Artin algebras},
  J. Algebra 66 (1980), no. 1, 61--122.

  \bibitem[AS2]{almost}
  M. Auslander, S. O. Smal\o,
  \emph{Almost split sequences in subcategories},
  J. Algebra 69 (1981), no. 2, 426--454.

  \bibitem[Be]{be}
  A. Beligiannis,
  \emph{On algebras of finite Cohen-Macaulay type},
  Adv. Math. 226 (2011), no. 2, 1973--2019.

  \bibitem[BS]{BS}
  A. B. Buan, \O. Solberg,
  \emph{Relative cotilting theory and almost complete cotilting modules. Algebras and modules},
  II (Geiranger, 1996), 77--92,
  CMS Conf. Proc., 24, Amer. Math. Soc., Providence, RI, 1998.

  \bibitem[En1]{eno-wak}
  H. Enomoto,
  \emph{Classifying exact categories via Wakamatsu tilting},
  J. Algebra 485 (2017), 1--44.

  \bibitem[En2]{FD-applet}
  H. Enomoto, \emph{FD Applet}, in preparation.

  \bibitem[Ha]{happel}
  D. Happel,
  \emph{Selforthogonal modules},
  Abelian groups and modules (Padova, 1994), 257--276,
  Math. Appl., 343, Kluwer Acad. Publ., Dordrecht, 1995.

  \bibitem[HU1]{HU}
  D. Happel, L. Unger,
  \emph{Complements and the generalized Nakayama conjecture},
  Algebras and modules, II (Geiranger, 1996), 293--310,
  CMS Conf. Proc., 24, Amer. Math. Soc., Providence, RI, 1998.

  \bibitem[HU2]{HU-partial}
  D. Happel, L. Unger,
  \emph{On a partial order of tilting modules},
  Algebr. Represent. Theory 8 (2005), no. 2, 147--156.

  \bibitem[JS]{JS}
  D. A. Jorgensen,  L. M. \c{S}ega,
  \emph{Independence of the total reflexivity conditions for modules},
  Algebr. Represent. Theory 9 (2006), no. 2, 217--226.

  \bibitem[LH]{gpc}
  R. Luo, Z. Huang,
  \emph{When are torsionless modules projective?}
  J. Algebra 320 (2008), no. 5, 2156--2164.

  \bibitem[MR]{MR}
  F. Mantese, I. Reiten,
  \emph{Wakamatsu tilting modules},
  J. Algebra 278 (2004), no. 2, 532--552.

  \bibitem[Ma1]{ma1}
  R. Marczinzik,
  \emph{On stable modules that are not Gorenstein projective},
  arXiv:1709.01132v3.

  \bibitem[Ma2]{ma2}
  R. Marczinzik,
  \emph{On weakly Gorenstein algebras},
  arXiv:1908.04738.

  \bibitem[Mi]{miyashita}
  Y. Miyashita,
  \emph{Tilting modules of finite projective dimension},
  Math. Z. 193 (1986), no. 1, 113--146.

  \bibitem[Mo]{Moh}
  S. K. Mohamed,
  \emph{Relative theory in subcategories},
  Colloq. Math. 117 (2009), no. 1, 29--63.


  \bibitem[RS]{RS}
  J. Rickard, A. Schofield,
  \emph{Cocovers and tilting modules},
  Math. Proc. Cambridge Philos. Soc. 106 (1989), no. 1, 1--5.

  \bibitem[RZ]{RZ}
  C. M. Ringel, P. Zhang,
  \emph{Gorenstein-projective and semi-Gorenstein-projective modules},
  Algebra Number Theory 14 (2020), no. 1, 1--36.

  \bibitem[Wa1]{wakamatsu}
  T. Wakamatsu,
  \emph{Stable equivalence for self-injective algebras and a generalization of tilting modules},
  J. Algebra 134 (1990), no. 2, 298--325.

  \bibitem[Wa2]{wakamatsu2}
  T. Wakamatsu,
  \emph{Tilting modules and Auslander's Gorenstein property},
  J. Algebra 275 (2004), no. 1, 3--39.

\end{thebibliography}
\end{document}